\documentclass[11pt,a4paper]{article}
\usepackage[T1]{fontenc}
\usepackage[utf8]{inputenc}
\usepackage{lmodern}
\usepackage{amsmath,amssymb,amsthm,mathtools,mathrsfs}
\usepackage{booktabs,tabularx,array,enumitem}
\usepackage{microtype,xurl}
\usepackage[margin=0.88in]{geometry}
\usepackage{xcolor}
\usepackage{cite}
\usepackage{caption}
\usepackage{needspace}
\definecolor{linkblue}{RGB}{26,52,108}
\usepackage[colorlinks=true,linkcolor=linkblue,citecolor=linkblue,urlcolor=linkblue]{hyperref}
\usepackage[nameinlink,noabbrev]{cleveref}
\hypersetup{pdftitle={Matching Randomized First-Order Bounds for Higher-Order Smooth Nonconvex Optimization},pdfauthor={},pdfsubject={Oracle complexity and randomized lower bounds},pdfkeywords={oracle complexity, randomized lower bounds, upper bounds, nonconvex optimization, higher-order smoothness}}
\allowdisplaybreaks[1]
\newtheorem{theorem}{Theorem}[section]
\newtheorem{lemma}[theorem]{Lemma}
\newtheorem{proposition}[theorem]{Proposition}
\newtheorem{corollary}[theorem]{Corollary}
\theoremstyle{definition}

\theoremstyle{remark}
\newtheorem{remark}[theorem]{Remark}
\newcommand{\ResearchAgentSystem}{Colombo, our laboratory's internal auto-research system}
\newcommand{\R}{\mathbb R}
\newcommand{\E}{\mathbb E}
\newcommand{\eps}{\varepsilon}
\newcommand{\norm}[1]{\left\|#1\right\|}
\newcommand{\inner}[2]{\left\langle #1,#2\right\rangle}
\newcommand{\op}{\mathrm{op}}
\newcommand{\Lip}{\operatorname{Lip}}
\newcommand{\Span}{\operatorname{span}}
\newcommand{\spanop}{\operatorname{span}}
\newcommand{\range}{\operatorname{range}}
\newcommand{\cyc}{\operatorname{cyc}}
\newcommand{\diag}{\operatorname{diag}}
\newcommand{\Sym}{\operatorname{Sym}}
\newcommand{\KL}{D_{\mathrm{KL}}}
\newcommand{\TV}{d_{\mathrm{TV}}}
\newcommand{\Ptrue}{\mathbb P}
\newcommand{\Pref}{\mathbb Q}
\newcommand{\vis}{\mathrm{vis}}
\newcommand{\hid}{\mathrm{hid}}
\newcommand{\Qrand}{Q_p^{\infty,\mathrm{rand}}}
\newcommand{\Qdet}{Q_p^{\infty,\mathrm{det}}}

\newcolumntype{Y}{>{\raggedright\arraybackslash}X}
\title{\textbf{Matching Randomized First-Order Bounds for Higher-Order Smooth Nonconvex Optimization}}
\newcommand{\authornames}{Haihan Zhang, Wendao Wu, Fangqing Zhang, Chenheng Zhang, Chunyuan Zheng, Cong Fang, Haoxuan Li, Zhouchen Lin}
\newcommand{\affiliation}{Peking University}
\newcommand{\email}{colombo.2026@outlook.com}
\newcommand{\mail}[1]{\href{mailto:#1}{\texttt{#1}}}
\newif\ifanonymous
\anonymousfalse
\ifanonymous
\author{Anonymous Authors}
\hypersetup{pdfauthor={Anonymous Authors}}
\else
\author{Haihan Zhang$^{1,*}$\quad Wendao Wu$^{1,*}$\quad Fangqing Zhang$^{1,*}$\\[0.20em] Chenheng Zhang$^{1,*}$ \quad
Chunyuan Zheng$^1$\\[0.20em]
Cong Fang$^{1,\dagger}$\quad Haoxuan Li$^{1,\dagger}$\quad Zhouchen Lin$^{1,\dagger}$\\[0.60em]
\small $^1$\affiliation\\[0.35em]
\footnotesize\mail{\email}\\[0.35em]
\small $^*$Equal contribution.\quad $^\dagger$Corresponding authors.}
\hypersetup{pdfauthor={\authornames}}
\fi
\date{}
\begin{document}
\maketitle
\begin{abstract}
Does randomization improve the worst-case first-order complexity of nonconvex
optimization when the objective is very smooth? We prove that, for every fixed
finite integer $p\ge3$, finding an $\eps$-stationary point requires
$\Omega_p(\eps^{-5/3})$ exact value-and-gradient queries, even for $C^\infty$
objectives with initial gap at most one and simultaneous global Lipschitz
budgets of one for derivatives of orders $1,\ldots,p$. Together with the known
deterministic upper bound, this determines the accuracy exponent up to a
single logarithmic factor. The lower bound holds for unrestricted adaptive
randomized algorithms, unbounded query locations, and outputs that need not
have been queried. One algorithm-independent hard distribution suffices in
dimension $O_p(\eps^{-47/3})$. The construction combines a block-chain potential
with an exact quartic core, repeated quadratic spectra, and two static Gaussian
tensor perturbations. A scheduled Haar argument controls the entire adaptive
query span. An innovation-only Gaussian comparison then bounds exact transcript
information without assuming finite precision or a minimum query separation.
This establishes the $5/3$ randomized accuracy barrier under every prescribed
finite smoothness order, extending the previously matched third-order case.
\end{abstract}
\noindent\textbf{Keywords:} oracle complexity; randomized lower bounds; upper bounds;
nonconvex optimization; higher-order smoothness; adaptive Gaussian observations.

{\small
\begin{samepage}
\noindent\textbf{AI Usage.}
Nearly the entire research pipeline for this paper was carried out by
\ResearchAgentSystem{}, powered by GPT-5.6 Sol. The system also conducted a
Lean-backed article audit of the resulting manuscript. The authors subsequently reviewed and approved the
mathematical claims, presentation, and formal artifacts, and take
responsibility for the final manuscript. The complete Lean audit report and the system's technical report will be made public at a later date.
\par
\end{samepage}
}

{\small
\begin{samepage}
\paragraph{Publication Intent and Recognition of Independent Work.}

This preprint is released as a research output of our study of
automated scientific research, for public scrutiny and discussion.
As we do not intend to submit this manuscript to a journal or
conference, it has not undergone the full editorial refinement
and internal review we would normally require for submission.
The authors remain responsible for its contents and for correcting
any errors or inaccurate attribution.

We recognize that releasing these results may affect researchers
who have independently been pursuing the same questions, including
work that is not yet public.
If this applies to your work, we invite you to contact us at
\email.
A brief description of the overlap, together with any dated
materials you are comfortable sharing, would help us understand
its scope and chronology.
Such materials might include draft excerpts, presentation slides,
or revision records; a complete manuscript or unpublished proof
is not required to begin the conversation.

We will review these cases carefully and update citations,
acknowledgments, and accounts of independent or concurrent work
as appropriate, including revising priority claims where warranted.
Nonpublic materials will be treated confidentially and used only
to assess the overlap unless their authors authorize otherwise.
We will obtain explicit permission from the authors before
publicly discussing such materials or providing them to an
automated research system.
\par
\end{samepage}
}

\section{Introduction}\label{sec:introduction}
Consider minimizing a smooth nonconvex function when an algorithm may query
exact function values and gradients, but no higher-order derivatives. How many
queries are necessary to find \(x\) with
\(\|\nabla f(x)\|\le \eps\)? Under a Lipschitz-gradient assumption, the
familiar \(\eps^{-2}\) accuracy dependence is worst-case optimal, including
for randomized algorithms~\cite{lower1}. Higher-order smoothness changes this
picture even though it does not enrich the oracle: Lipschitz third derivatives
allow a deterministic first-order method to achieve
\(O(\eps^{-5/3}\log(1/\eps))\) queries~\cite{carmon}.

This raises the central question of the paper. If the objective obeys a fixed
finite hierarchy of higher-order smoothness bounds, can private randomization
improve the \(5/3\) exponent while the algorithm still observes only values and
gradients? The answer identifies whether the accelerated rate is an artifact
of current deterministic methods or an information barrier for first-order
optimization. It also separates two resources that are often conflated:
additional regularity of the objective and additional information returned by
the oracle.

The known bounds do not resolve this question. Zhou~\cite{zhou} obtains the
\(5/3\) deterministic exponent through a block-chain construction, but a
deterministic resisting oracle may choose its orientation after seeing the
query sequence. It therefore does not produce a function fixed before a
randomized algorithm draws its private seed. Zhou, Sugiyama, and
Suzuki~\cite{trapping} prove a randomized exponent \(3/2+1/(2p)\) when the
gradient and the \(p\)th derivative have fixed Lipschitz budgets. This matches
\(5/3\) for \(p=3\), but is smaller for \(p\ge4\); their all-\(p\) randomized
\(5/3\) result instead removes the fixed gradient-Lipschitz budget. The open
regime is therefore the normalized class with all budgets
\(L_1,\ldots,L_p\) fixed and \(p\ge3\) finite.

\paragraph{Main result.}
For the normalized class defined in Section~\ref{sec:main}, we prove
\begin{equation}\label{eq:intro-rate}
 c_p\eps^{-5/3}\le\Qrand(\eps)
 \le\Qdet(\eps)\le C_p\eps^{-5/3}\log(2/\eps).
\end{equation}
Thus randomization does not improve the dimension-uniform first-order accuracy
exponent under any prescribed finite order of smoothness. The upper and lower
bounds match up to one logarithmic factor. The supremum over finite dimensions
is part of the complexity definition: the hard dimension may grow with the
accuracy requirement, and the result is not a fixed-dimensional lower bound.
More strongly, the lower bound is witnessed by one distribution, chosen
independently of the algorithm and its private seed, in dimension
\(O_p(\eps^{-47/3})\). Our contribution is this randomized lower bound. The
upper bound in~\eqref{eq:intro-rate} follows from the existing first-order
method of Carmon et al.~\cite{carmon}; a one-dimensional quadratic lift makes
its guarantee uniform even when the objective's actual initial gap is
arbitrarily small.

\paragraph{Why exact information is the main difficulty.}
A small hidden component in a gradient is not the same as no information.
An exact oracle allows arbitrarily small rescalings and differences between
nearly dependent queries. Moreover, a perturbation sampled once as part of the
objective cannot be treated as fresh noise at each call: repeated queries must
return the same answer. We address both issues by controlling the whole query
span and by comparing only genuinely new Gaussian observation components.

\paragraph{Construction and proof ideas.}
The lower bound combines four ingredients, each addressing a distinct obstacle.
\begin{enumerate}[leftmargin=*,label=(\roman*)]
\item We retain the entry-to-suffix block-chain architecture of~\cite{zhou},
but replace its scalar potential by a smooth function with an exact quartic
core. The core yields a cubic-plus-quartic tensor signal, while its controlled
tails permit one simultaneous calibration of every prescribed derivative
budget. This is what keeps the \(5/3\) exponent unchanged as \(p\) increases.

\item We repeat the quadratic spectrum and apply one Haar rotation. High
spectral multiplicity then limits the alignment of every undisclosed readout
with the entire adaptive query span, including directions obtained by
subtracting nearly dependent queries and the final unqueried output.

\item We conceal the cubic and quartic signals with two static Gaussian
tensors. An innovation-only Gaussian comparison charges KL divergence only to
new tensor-observation directions. Because the perturbations are sampled once,
the comparison preserves exact replay and remains valid for singular
measurements and arbitrary query scales. This converts geometric hiding into a
fixed, algorithm-independent hard distribution rather than an adaptive oracle.

\item A global radial transformation confines every internal query while
preserving a stationarity obstruction both inside and outside the truncation
radius. It enforces the global smoothness and gap constraints without bounding
the algorithm's query locations or requiring its output to have been queried.
\end{enumerate}
Together, these ingredients close the randomized gap under all normalized
budgets \(L_1,\ldots,L_p\). The appendices give the analytic calibration, Haar
geometry, exact transcript comparison, and full quantifier conversion.

\section{Problem formulation and main result}\label{sec:main}
Fix an integer $p\ge3$, independently of $\eps$. All vector norms are Euclidean.
For $x,v\in\R^d$ with $\|v\|=1$, write $f_{x,v}(t)=f(x+tv)$.
Let $\mathcal F_{1:p}^{(d)}(\Delta,L_1,\ldots,L_p)$ be the set of
$f\in C^\infty(\R^d)$ such that $f(0)-\inf f\le\Delta$ and
\begin{equation}\label{eq:function-class}
 |f_{x,v}^{(q)}(t)-f_{x,v}^{(q)}(s)|\le L_q|t-s|
 \quad(q\in[p],\ x\in\R^d,\ \|v\|=1,\ s,t\in\R).
\end{equation}
This is the directional smoothness convention of~\cite{carmon,zhou}.
Define $\mathcal H_{p,d}=\mathcal F_{1:p}^{(d)}(1,1,\ldots,1)$.
Third-order smoothness means a Lipschitz third derivative, not merely a
bounded third derivative. Only the first $p$ derivative budgets are prescribed;
we do not impose one common bound at all derivative orders.

\paragraph{Oracle and algorithms.}
A query at any finite $x\in\R^d$ returns
$\mathcal O_f(x)=(f(x),\nabla f(x))$ exactly. Computation between calls and
query norms are unrestricted. Policies and terminal outputs are measurable,
and the output may be unqueried. Each objective is fixed independently of the
algorithm's private seed. Let $Q_p^{\mathrm{rand}}(d,\eps)$ be the least integer
query cap $T$ for which one algorithm makes at most $T$ calls almost surely
on each fixed $f\in\mathcal H_{p,d}$ and satisfies
\begin{equation}\label{eq:success-definition}
 \Pr_\omega\{\|\nabla f(x_{\mathrm{out}})\|\le\eps\}\ge2/3
 \qquad\text{for every fixed }f\in\mathcal H_{p,d}.
\end{equation}
The deterministic version requires guaranteed success. Set
$\Qrand(\eps)=\sup_{d\ge1}Q_p^{\mathrm{rand}}(d,\eps)$, and define
$\Qdet$ analogously. These are worst-case query caps, not expected costs.
Throughout, private-seed, advice, side-information, and transcript spaces
are standard Borel. Randomized policies are measurable kernels (equivalently,
after adjoining independent $[0,1]$ seeds, measurable functions). Thus the
regular conditional laws, disintegrations, and Fubini--Tonelli steps used
below exist in their standard measurable versions.

\begin{theorem}[Matching randomized accuracy exponent]\label{thm:matching}
For every fixed finite $p\ge3$, there are constants
$c_p,C_p,\eps_p>0$ such that, for $0<\eps\le\eps_p$,
\begin{equation}\label{eq:main-rate}
 c_p\eps^{-5/3}\le\Qrand(\eps)
 \le\Qdet(\eps)\le C_p\eps^{-5/3}\log(2/\eps).
\end{equation}
More precisely, there is an integer $d_\eps\le C_p\eps^{-47/3}$ and a
single distribution $\mathcal D_{p,\eps}$ supported on
$\mathcal H_{p,d_\eps}$ such that every randomized policy with a pathwise cap
of $\lfloor c_p\eps^{-5/3}\rfloor$ calls has average success probability
at most $1/3$ under $\mathcal D_{p,\eps}$. The distribution is independent
of the policy and of its private seed.
\end{theorem}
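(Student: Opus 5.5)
We split \eqref{eq:main-rate} into its three inequalities and treat each one separately. The middle inequality $\Qrand\le\Qdet$ is immediate: a deterministic algorithm with guaranteed success is a randomized algorithm whose seed is ignored, and its success probability is $1\ge2/3$.

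For the upper bound we invoke the first-order method of Carmon et al.~\cite{carmon}. It uses only $L_1$ and $L_3$, so it applies to $\mathcal H_{p,d}$ for every $p\ge3$, and its cost $O(\Delta^{\cdot}\eps^{-5/3}\log(1/\eps))$ is dimension-free. The one subtlety is that its guarantee is stated in terms of the gap $\Delta$, which may be arbitrarily small. We handle this with the one-dimensional quadratic lift $g(x,s)=f(x)+\tfrac12\eta s^2$ started at $s_0$ with $\tfrac12\eta s_0^2$ of order one. The lifted function has gap of order one, still satisfies the derivative budgets up to constants, and its $\eps$-stationary points project to $\eps$-stationary points of $f$. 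This gives $C_p\eps^{-5/3}\log(2/\eps)$ uniformly in $d$.

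The lower bound is the substance. We apply Yao's principle in the form stated: we build one distribution $\mathcal D_{p,\eps}$ on $\mathcal H_{p,d_\eps}$ and show that every deterministic policy with cap $T=\lfloor c_p\eps^{-5/3}\rfloor$ succeeds with probability at most $1/3$. Averaging over seeds via Fubini on standard Borel spaces then handles randomized policies.

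The construction proceeds in four steps.
\begin{enumerate}[label=(\roman*)]
\item We take Zhou's block-chain potential: $K\asymp\eps^{-1/3}$-scale blocks, each needing about $\eps^{-4/3}$ work. The scalar link is replaced by a smooth function with an exact quartic core whose tails are calibrated so that all of $L_1,\dots,L_p$ are at most $1$ after a single rescaling, and so that $\|\nabla f\|>\eps$ until the final block is reached.
\item We hide the chain orientation with a Haar rotation of a quadratic spectrum repeated with high multiplicity.
\item We add two static Gaussian tensor perturbations that mask the cubic and quartic signals.
\item We apply a radial truncation map so that queries and outputs far from the origin are non-stationary, and so that the global smoothness and gap constraints hold.
\end{enumerate}

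The proof then runs in three stages. First, analytic calibration: we check membership in $\mathcal H_{p,d}$ and check stationarity obstruction both inside and outside the truncation radius. Second, geometry: a scheduled Haar argument shows that, with high probability, every undisclosed readout direction has small correlation with the \emph{entire} span of queries, including differences of near-duplicates and the final output. The dimension $d\asymp\eps^{-47/3}$ comes from union-bounding these alignment events over $T$ steps and all blocks. Third, information: an innovation-only Gaussian comparison bounds the KL divergence between the true transcript and a reference transcript in which the next hidden block is absent. The key point is that the divergence is charged only to the component of each new tensor observation orthogonal to previously observed directions, so exact replay contributes nothing. Summing over $T$ queries gives total KL $o(1)$ per block unless roughly $\eps^{-4/3}$ queries are spent on it. Pinsker's inequality then shows the algorithm cannot advance more than a constant fraction of the chain with probability above $1/3$.

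The main obstacle is this information step. The oracle is exact and queries may be arbitrarily scaled or nearly dependent, so naive per-query noise arguments fail. The perturbation is static, so responses are deterministic functions of past ones along repeated directions. We will attack this by conditioning on the Gaussian tensors restricted to the already-queried subspace and computing the conditional law of the genuinely new coordinates; these coordinates are Gaussian with a covariance that depends only on the innovation. We then bound the mean shift induced by the hidden signal relative to that covariance, using the Haar alignment bounds from the second stage. Ensuring that this ratio is scale-invariant, and hence unaffected by query scaling or singular measurements, is the delicate point.
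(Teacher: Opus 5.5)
Your upper bound, the middle inequality, the Yao-style averaging, and your four ingredients all match the paper. The lower bound as you outline it, however, has a genuine gap where the geometric and information stages meet.

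The whole-span Haar bound holds only for query processes whose replies depend on $U$ through the scheduled disclosures $U|_{I_t}$ and the counted matrix-vector products. That restriction is what keeps the unexposed part of $U$ conditionally Haar. The true static-tensor replies contain $S_k=w_k\sum_s b_s^{\otimes k}$ with undisclosed $b_s=U\bar b_s$, so the deferred-Haar argument does not apply to the true transcript. ``Bounding the mean shift using the Haar alignment bounds'' is therefore circular if those bounds are taken under the true law, and your plan does not say otherwise.

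The paper breaks this circle with three choices your plan lacks:
\begin{itemize}
\item The reference censors \emph{every} undisclosed readout in the innovation, replacing $S_k$ by $S_{k,\vis}(t)$. Removing only ``the next hidden block'' would leave the reference dependent on hidden columns, and hence inadmissible for the geometric theorem.
\item The reference is driven by independent physical innovation seeds, so after fixing them it is an allowed policy for the geometric theorem.
\item The divergence is taken as $\KL(\Pref\Vert\Ptrue)$. Its chain rule puts the expectation under $\Pref$, exactly where the angle event holds; absolute continuity comes from equal affine supports.
\end{itemize}
Only then is the bad event transferred to $\Ptrue$ by Pinsker. That bad event must include a hidden core exit, not just a large terminal readout. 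This is because the polynomial-channel model coincides with the real oracle only while undisclosed readouts stay in their exact quartic cores (the first-exit coupling). Also, $\mathcal G_Z$ must be conditioned on only at the very end, since the KL computation needs the untruncated Gaussian prior.

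Your quantitative picture is also reversed in ways that matter. The chain has $N\asymp\eps^{-4/3}$ blocks of length $M\asymp\eps^{-1/3}$. Each block costs only about $j_\star\asymp\eps^{-1/3}$ queries. Progress is limited by the propagation $\overline QI_t\subseteq I_{t+1}$ and the disclosure times $\tau_s=sj_\star$, with $T=\lfloor Nj_\star/2\rfloor$ so that $b_N$ is never disclosed. There is no per-block information argument forcing $\eps^{-4/3}$ queries; the Gaussian comparison is applied once, globally, and gives total divergence $O(\eps)$.

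The step you call delicate is settled by the exact conditional KL, $\|P_{\Delta V_{k,t}}S_{k,\hid}(t)\|_F^2/(2\sigma_k^2)$. This is a projection onto the new observation subspace, so it is automatically scale-invariant and vanishes on repeated queries. Its orthogonal increments telescope to $\|P_{V_{k,\ell_s}}b_s^{\otimes k}\|_F^2\le k\|P_{W_s}b_s\|^{2(k-1)}$, with no factor of $T$.

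Finally, the dimension does not come from union bounds over steps, which the paper explicitly avoids. Admissibility forces $\sigma_k\propto\delta_k/\sqrt d$, so as a function of $K$ the cubic KL term scales like $d\,\theta^4\propto K\cdot K^{-2}$. The full count $\eps^{-13}/K$ is what forces $K\asymp\eps^{-14}$, hence $d=(K+1)NM=O_p(\eps^{-47/3})$. A per-query summation that loses a factor of $T$ would not yield the stated dimension.
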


For $p\ge4$, the exponent improvement over $3/2+1/(2p)$ is
$(p-3)/(6p)$; for $p=3$ we recover an already matched exponent by a different
hard distribution. Constants and the small-accuracy threshold may depend on
$p$. The theorem does not remove the logarithm, optimize the hard dimension,
or yield a bound uniform as $p$ grows with $1/\eps$.

\paragraph{Notation.}
$D^qf$ is the symmetric $q$-linear Fr\'echet derivative. We use the full
multilinear operator norm for hard-instance derivative estimates, the
Frobenius norm for tensor observation spaces, and $P_V$ for orthogonal
projection onto a subspace $V$. $A^\dagger$ denotes the Moore--Penrose
pseudoinverse. $\Sym$ averages over tensor permutations. A standard Gaussian
in a finite-dimensional tensor space is isotropic for its Frobenius inner
product. Subscripts $p$ on asymptotic notation permit constants depending
only on the prescribed order. Logarithms are natural.

\section{Related work}\label{sec:related}
\paragraph{Accelerated first-order upper bounds.}
Carmon et al.~\cite{carmon} give deterministic exact first-order bounds of
$O(\eps^{-7/4}\log(1/\eps))$ under Lipschitz Hessians and
$O(\eps^{-5/3}\log(1/\eps))$ under Lipschitz third derivatives.
Li and Lin~\cite{lilin23} remove the logarithm in the Hessian-Lipschitz
case. Xiong, Jerad, and Cartis~\cite{xiong} obtain the third-order gradient
complexity with a parameter-free method that also uses values for backtracking.
We use the value-and-gradient bound of~\cite{carmon}, rather than identifying
a gradient-evaluation count with the total cost of a different oracle model.
The related Hessian-free accelerated method of Carmon et
al.~\cite{accelerated} additionally guarantees approximate second-order
stationarity.

\begin{table}[t]
\caption{Accuracy dependence in the high-dimensional exact first-order model,
with initial gap normalized to one. Lower bounds allow a sufficiently large
finite dimension. \textsuperscript{a}The intermediate budgets follow from
Lemma~6.12 of~\cite{trapping} by the calibration explained below.
The $p=2$ row is a separate smoothness regime. References~\cite{zhou,zhang,trapping} are 2026 preprints.}
\label{tab:frontier}
\centering
\small
\setlength{\tabcolsep}{5pt}
\renewcommand{\arraystretch}{1.18}
\begin{tabularx}{\textwidth}{@{}>{\raggedright\arraybackslash}p{0.28\textwidth}>{\raggedright\arraybackslash}p{0.20\textwidth}>{\raggedright\arraybackslash}p{0.15\textwidth}Y@{}}
\toprule
\textbf{Result} & \textbf{Normalized smoothness} & \textbf{Algorithms} & \textbf{Query bound} \\
\midrule
Carmon et al.~\cite{carmon} & $L_1=L_3=1$ & Deterministic &
$O(\eps^{-5/3}\log(1/\eps))$ (upper bound) \\[7pt]
Carmon et al.~\cite{lower2} & $L_1=\cdots=L_p=1$, fixed $p\ge3$ & Deterministic &
$\Omega_p(\eps^{-8/5})$ (lower bound) \\[7pt]
Zhou~\cite{zhou} & $L_1=\cdots=L_p=1$, fixed $p\ge3$ & Deterministic &
$\Omega_p(\eps^{-5/3})$ (lower bound) \\[7pt]
Zhang et al.~\cite{zhang} & $L_1=L_2=1$ & Randomized &
$\Omega(\eps^{-7/4})$ (lower bound) \\[7pt]
Zhou et al.~\cite{trapping} & $L_1=\cdots=L_p=1$, fixed $p\ge3$\textsuperscript{a} & Randomized &
$\Omega_p(\eps^{-3/2-1/(2p)})$ (lower bound) \\[7pt]
\textbf{Theorem~\ref{thm:matching}} & $L_1=\cdots=L_p=1$, fixed $p\ge3$ & Randomized &
$\Omega_p(\eps^{-5/3})$ (lower bound) \\[7pt]
\bottomrule
\end{tabularx}
\end{table}

\paragraph{Deterministic lower bounds and the block-chain starting point.}
Carmon et al.~\cite{lower2} establish first-order lower bounds of
$\Omega(\eps^{-12/7})$ for Lipschitz Hessians and
$\Omega_p(\eps^{-8/5})$ under arbitrarily many fixed smoothness constraints.
Zhou~\cite{zhou}, Theorem~5.2, obtains the sharp deterministic exponents
$7/4$ and $5/3$. Our analytic starting point is the entry-to-suffix
block-chain architecture of that work; we do not claim this architecture as
new. The scalar potential is modified, and the randomized information argument
is separate. A deterministic lower bound, including one obtained by reducing
to zero-respecting algorithms, does not alone provide the fixed-distribution
statement in Theorem~\ref{thm:matching}.

\paragraph{Recent randomized lower bounds.}
Zhang et al.~\cite{zhang} treat the exact value-and-gradient Hessian-Lipschitz
problem using large eigenspaces and sequential disclosure. The higher-order
bound of Zhou, Sugiyama, and Suzuki~\cite{trapping}, Theorem~6.1 and
Corollary~6.3, has exponent $3/2+1/(2p)$; Remark~6.2 allows exact values and
unqueried outputs. Its Lemma~6.12 bounds intermediate derivatives by
$C_q\mu^q/\eps^{q-1}$. Choosing
$\mu=a_p\eps^{(p-1)/p}$ gives $C_qa_p^q\eps^{1-q/p}\le1$ simultaneously for
$2\le q\le p$, with $a_p$ sufficiently small. Thus its comparison row applies
to our entire normalized class, not just to the endpoint budgets. The
$5/3$ bound obtained there without a fixed $L_1$ constraint concerns a larger
class. Our improvement for $p\ge4$ retains $L_1$ and every intermediate
budget. High-multiplicity spectra and delayed disclosure have antecedents in
these works; the additional mechanism here is a polynomial signal hidden by
static tensors and an exact, innovation-only transcript comparison.

\paragraph{Higher-order, mixed, and stochastic oracles.}
With derivatives through order $p$ supplied by the oracle, regularized Taylor
methods achieve $O(\eps^{-(p+1)/p})$ evaluations~\cite{birgin}; matching
randomized lower bounds are developed by Carmon et al.~\cite{lower1}.
These are not pure first-order query bounds. Adil et al.~\cite{adil} study
tradeoffs between gradient and Hessian queries, including dimension-dependent
first-order consequences. Stochastic first-order lower bounds~\cite{arjevani}
concern random oracle errors. In our construction the Gaussian perturbation is
sampled once and then remains part of a deterministic function; repeated
queries are exact repetitions, not new noisy samples.

\paragraph{Dimension and adaptive linear observations.}
The dependence on dimension cannot be suppressed when comparing different
settings: already in one dimension, the interaction of randomization and
value access changes stationarity complexity~\cite{chewi}. Trapping methods
in~\cite{trapping} likewise provide dimension-dependent improvements.
Our geometric argument is related to exact adaptive matrix-vector query
lower bounds~\cite{simchowitz,sun}. Its specific requirement is stronger than
a bound on individual query correlations: it bounds the projection of every
undisclosed readout onto the entire adaptive query span while permitting
scheduled disclosure and auxiliary Gaussian reference replies.

\section{A globally smooth hard distribution}\label{sec:construction}
We describe the construction before turning to its information content.
Appendices~\ref{app:analytic} and~\ref{app:global} give all scalar estimates,
constants, and global derivative bounds.

\subsection{An exact quartic core preserves every prescribed smoothness budget}
Choose an even $C^\infty$ cutoff $\chi$ equal to one on $[-1,1]$ and zero
outside $[-2,2]$. Put $W(t)=\chi(t)+(1-\chi(t))t^{-2}$, with $W(0)=1$, and
\begin{equation}\label{eq:overview-primitive}
 \phi_r(u)=120\int_1^u v^2(v-1)W(v/r)\,dv,\qquad r\ge1.
\end{equation}
Then $\phi_r\ge0$, $\phi_r(1)=0$, and
\begin{equation}\label{eq:overview-core}
 \phi_r(u)=30u^4-40u^3+10\ (|u|\le r),\qquad
 \|\phi_r^{(q+1)}\|_\infty\le C_qr^{3-q}.
\end{equation}
The potential is polynomial only on its core; its derivative becomes linear
in the tails, making all the required global derivative bounds possible.

Let $H=\gamma I+L\mathsf P_M$, where $\mathsf P_M$ is the path Laplacian and
$L>0$ is a small constant. In each block choose a vector $a$ of norm $A$,
supported on the suffix $j\ge j_\star:=\lceil M/2\rceil$, and let $e=e_1$.
For $z=(z_1,\ldots,z_N)$ write $u_s=a^\top z_s$ and $u_0=1$. Set
\begin{equation}\label{eq:overview-block}
 F(z)=\sum_{s=1}^N\left[
 \tfrac12z_s^\top Hz_s-\lambda u_{s-1}e^\top z_s
 +\tfrac\kappa2u_s^2+\tau\phi_r(u_s)\right],
\end{equation}
where $\lambda=(a^\top H^{-1}e)^{-1}$,
$\tau=(a^\top H^{-1}a)^{-1}$, and
$\kappa=(e^\top H^{-1}e)/(a^\top H^{-1}e)^2$.
The pure quadratic matrix $Q_0$ is positive semidefinite, and the linear term
is $c_0=(-\lambda e,0,\ldots,0)$.

For any fixed small derivative budget $b>0$, the calibration in
Proposition~\ref{ana:prop:calibration} gives
\begin{equation}\label{eq:overview-scales}
 \begin{gathered}
 \gamma\asymp_p\eps^{2/3},\quad M\asymp_p\eps^{-1/3},\quad
 N\asymp_p\eps^{-4/3},\quad A=r\asymp_p\eps^{-1/3},\quad
 \tau\asymp_p\eps^{4/3},\\
 F(0)-\inf F\le\tfrac14,\qquad
 \sup_z\|D^{q+1}F(z)\|_{\op}\le b\ (1\le q\le p),\qquad
 u_N=0\ \Longrightarrow\ \|\nabla F(z)\|\ge32\eps.
 \end{gathered}
\end{equation}
The key cancellation is $\tau A^{q+1}r^{3-q}=\tau A^4$.
Thus raising the prescribed finite order does not change the accuracy
exponent. A scalar transition certificate and an exact block residual identity
prove the last implication in~\eqref{eq:overview-scales}; the terminal block
is handled with its boundary term intact.

\subsection{Repeated spectra and static tensors}
Take $K$ extra copies of the positive-semidefinite quadratic $Q_0$, keeping
the linear and nonlinear terms in the first copy only. The canonical matrix
is $\overline Q=I_{K+1}\otimes Q_0$, the dimension is
$d=(K+1)D_0$, and $D_0=NM$. Rotate the full construction by one Haar
$U\in O(d)$, giving a base $F_U$ and physical quadratic matrix
$Q_U=U\overline Q U^\top$. Denote its unit active readouts by
$b_s=U\bar b_s$, so the actual readouts have norm $A$ and equal $Ab_s$.
These operations preserve the analytic bounds in~\eqref{eq:overview-scales}.

Independently draw standard Gaussian tensors $Z_k\in\Sym^k(\R^d)$,
$k=3,4$, and define
\begin{equation}\label{eq:overview-mask}
 P_Z(y)=\frac{\sigma_3}{3}Z_3[y,y,y]
       +\frac{\sigma_4}{4}Z_4[y,y,y,y],\qquad
 \sigma_k=\frac{\delta_k}{a_k\sqrt d},\quad
 a_k=4\sqrt{k\log(1+4k)+2}.
\end{equation}
Let $\mathcal G_Z=\{\sigma_k\|Z_k\|_{\op}\le\delta_k,
\ k=3,4\}$. A sphere-net argument gives
$\Pr(\mathcal G_Z)\ge1-4e^{-4d}$. This is an event about the fixed instance,
not an event re-sampled at each oracle call.

\subsection{Unrestricted queries and a robust terminal certificate}
Set $R=\eps^{-1}$, $\eta_0=4\eps^2$,
$\delta_3=\eps^3/8$, and $\delta_4=\eps^4/8$. The final function is
\begin{equation}\label{eq:overview-final}
 f_{U,Z}(x)=(F_U+P_Z)(\rho_R(x))+\tfrac{\eta_0}{2}\|x\|^2,
 \qquad \rho_R(x)=\frac{x}{\sqrt{1+\|x\|^2/R^2}}.
\end{equation}
Condition its law on $\mathcal G_Z$ to obtain $\mathcal D_{p,\eps}$.
Theorem~\ref{ana:thm:confinement} proves membership in $\mathcal H_{p,d}$
pointwise on this event, with initial gap at most $1/3$. It also proves
\begin{equation}\label{eq:overview-certificate}
 |b_N^\top\rho_R(x)|\le h_{\mathrm{cert}}:=16\eps/b
 \quad\Longrightarrow\quad \|\nabla f_{U,Z}(x)\|>\eps.
\end{equation}
For $\|x\|\ge R$ the radial gradient alone exceeds $3\eps$. For
$\|x\|\le R$, the Jacobian's smallest singular value is at least
$1/(2\sqrt2)$, and the perturbation-gradient budget is
$\delta_3R^2+\delta_4R^3=\eps/4$. These facts preserve the block certificate.
Every finite original query corresponds to a bounded internal query, but the
algorithm itself is never restricted to a bounded domain.

\section{Information hiding under exact adaptive queries}\label{sec:information}
The lower bound combines a geometric statement with an exact Gaussian
comparison. Their roles are separate: the geometric bound is proved under
an auxiliary reference process, and the information bound transfers failure
to a static-function distribution.

\subsection{A scheduled bound on the whole query span}
Give active coordinate $(s,j)$ layer
$\ell(s,j)=(s-1)j_\star+\min\{j,j_\star\}$, and let $I_t$ span canonical
coordinates with layer at most $t$. Then
$\overline QI_t\subseteq I_{t+1}$, $c_0\in I_1$, and the readout
$\bar b_s$ is orthogonal to $I_t$ until $t=sj_\star$.
For the analysis, disclose $U|_{I_t}$ at time $t$ and supply one exact
quadratic matrix-vector product per query. These are strengthened observations,
not extra resources required by the original algorithm.

Let $L_{\mathrm{cand}}$ count the queries and one unqueried output.
For any allowed reference policy, deferred Haar exposure produces canonical
iid Gaussian starts $g_1,\ldots,g_{L_{\mathrm{cand}}}$ such that
\begin{equation}\label{eq:overview-cyclic}
 U^\top x_t\in I_t+\mathcal R,\qquad
 \mathcal R=\bigoplus_\lambda
       \Span\{P_\lambda g_1,\ldots,P_\lambda g_{L_{\mathrm{cand}}}\}.
\end{equation}
The starts may depend on $U$; only their Gaussian law relative to the fixed
canonical subspaces is used. Because each eigenspace has $K+1$ repeated
copies, a random-subspace angle estimate implies, with probability at least
$1-\zeta$,
\begin{equation}\label{eq:overview-angle}
 \|P_{W_s}b_s\|\le\theta\quad\text{for every }s,\qquad
 \theta^2=O\left(\frac{L_{\mathrm{cand}}+\log(D_0/\zeta)}{K}\right),
\end{equation}
where $W_s$ is the span of all internal queries made before readout $s$ is
disclosed. The same event controls the final output if its readout remains
undisclosed. Crucially,~\eqref{eq:overview-angle} bounds an entire span and
therefore already includes differences of nearly dependent queries.
Appendix~\ref{app:geometry} proves the statement, including its conditional
Haar law and the precise allowed side information.

\subsection{Censoring only Gaussian innovations}
Within the undisclosed quartic cores, the two nonlinear signal tensors are
\begin{equation}\label{eq:overview-signal}
 S_3=-120\tau A^3\sum_s b_s^{\otimes3},\qquad
 S_4=120\tau A^4\sum_s b_s^{\otimes4}.
\end{equation}
Write $S_k=w_k\sum_s b_s^{\otimes k}$ and
$\Theta_k=S_k+\sigma_kZ_k$. The strengthened experiment returns separate
channels $Y_{k,t}=\Theta_k[z_t^{\otimes(k-1)},\cdot]$, where
$z_t=\rho_R(x_t)$. Their values are recovered by contracting with $z_t/k$.
The quadratic part and already disclosed taper corrections complete the exact
value-and-gradient answer before a hidden readout leaves its core.

For degree $k$, let
\begin{equation}\label{eq:overview-observation-space}
 V_{k,t}=\Span\{\Sym(z_i^{\otimes(k-1)}\otimes v):i\le t,\ v\in\R^d\},
 \qquad \Delta V_{k,t}=V_{k,t}\cap V_{k,t-1}^\perp.
\end{equation}
Let $M_{k,t-1}$ be the minimum-norm tensor fitting past answers,
$L_{k,t}A=A[z_t^{\otimes(k-1)},\cdot]$, and
$B_{k,t}=L_{k,t}(I-P_{V_{k,t-1}})$. Conditional on $U$ and the history,
the true channel is Gaussian with mean
$L_{k,t}M_{k,t-1}+B_{k,t}S_k$ and covariance
$\sigma_k^2B_{k,t}B_{k,t}^*$. The reference keeps this covariance and the
old fitted mean, replacing only $S_k$ by its currently disclosed part.

\begin{proposition}[Exact adaptive comparison]\label{prop:main-kl}
Let $\Ptrue$ and $\Pref$ be the joint laws of the hidden rotation, the seed,
and the transcript in the true static-tensor and innovation-only reference
experiments, respectively. Then
\begin{equation}\label{eq:overview-kl}
 \KL(\Pref\Vert\Ptrue)=
 \sum_{k=3}^4\frac1{2\sigma_k^2}\sum_{t=1}^T
 \E_{\Pref}\|P_{\Delta V_{k,t}}S_{k,\hid}(t)\|_F^2.
\end{equation}
If~\eqref{eq:overview-angle} holds with failure probability at most $\zeta$,
then
\begin{equation}\label{eq:overview-kl-bound}
 \KL(\Pref\Vert\Ptrue)\le
 \frac{N^2}{2}\sum_{k=3}^4\frac{k w_k^2}{\sigma_k^2}
           \bigl(\theta^{2(k-1)}+\zeta\bigr).
\end{equation}
The identities permit singular measurements, repeated queries, and arbitrary
adaptive query scales.
\end{proposition}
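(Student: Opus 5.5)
The proof has two parts: an exact chain-rule identity for the KL divergence, and then a bound on each summand using the angle event~\eqref{eq:overview-angle}.

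\emph{Step 1: the identity~\eqref{eq:overview-kl}.} Write the joint laws of $(U,\omega,\text{transcript})$ in sequential form. The rotation $U$ and seed $\omega$ have identical marginals under $\Pref$ and $\Ptrue$. Query locations and the disclosure schedule are the same measurable functions of the history in both experiments. The chain rule for KL divergence (which needs regular conditional laws, assumed standard Borel) therefore reduces $\KL(\Pref\Vert\Ptrue)$ to
\[
\sum_t\sum_{k=3}^4\E_{\Pref}\,\KL\bigl(\Pref(Y_{k,t}\mid\cdot)\,\Vert\,\Ptrue(Y_{k,t}\mid\cdot)\bigr).
\]
Conditional independence of the channels $k=3,4$ given $U$ and the history follows from the independence of $Z_3$ and $Z_4$.

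For the true conditional law, decompose $Z_k=P_{V_{k,t-1}}Z_k+(I-P_{V_{k,t-1}})Z_k$. Gaussian isotropy in the Frobenius inner product makes the second piece independent of the past answers. The first piece, plus the corresponding part of $S_k$, is determined exactly by the fitted tensor $M_{k,t-1}$. This gives the stated mean and covariance $\sigma_k^2B_{k,t}B_{k,t}^*$.

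Both conditional laws are Gaussian with the same covariance. They are supported on the same affine subspace $L_{k,t}M_{k,t-1}+\range(B_{k,t})$, so no absolute-continuity issue arises even for singular or repeated queries. Their means differ by $B_{k,t}S_{k,\hid}(t)$.

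The KL divergence is then $\tfrac1{2\sigma_k^2}\|(B_{k,t}B_{k,t}^*)^{\dagger/2}B_{k,t}S_{k,\hid}\|^2$. Since $(B B^*)^{\dagger/2}B$ is a partial isometry with initial space $\range(B^*)$, this equals $\tfrac1{2\sigma_k^2}\|P_{\range(B_{k,t}^*)}S_{k,\hid}\|_F^2$. Restricting to symmetric tensors, $\range(B_{k,t}^*)=(I-P_{V_{k,t-1}})\range(L_{k,t}^*)=\Delta V_{k,t}$. Summing gives~\eqref{eq:overview-kl}. Scale invariance is automatic because only subspaces enter.

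\emph{Step 2: the bound~\eqref{eq:overview-kl-bound}.} The spaces $\Delta V_{k,t}$ are orthogonal and their union is contained in $V_{k,T}$. Hence
\[
\sum_t\|P_{\Delta V_{k,t}}S_{k,\hid}(t)\|^2
\]
is at most a sum over hidden indices $s$ of $w_k^2$ times quantities controlled by $\|P_{V_{k,\cdot}}\,\cdots\, b_s^{\otimes k}\|^2$, where the projection is evaluated at the stage before $b_s$ is disclosed. Each $s$ is charged only while it is hidden, and a hidden $s$ at time $t$ has all earlier queries in $W_s$.

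The key estimate is
\[
\|P_{V_k(W)}b^{\otimes k}\|_F^2\le k\,\|P_Wb\|^{2(k-1)},
\qquad V_k(W)=\Span\{\Sym(z^{\otimes(k-1)}\otimes v): z\in W\}.
\]
To prove it, decompose $b=P_Wb+b_\perp$ and expand $b^{\otimes k}$. Every monomial containing at least two factors of $b_\perp$ is orthogonal to $V_k(W)$, because each element of $V_k(W)$ has at least $k-1$ slots lying in $W$. What remains is $(P_Wb)^{\otimes k}$ plus the $k$ terms with exactly one $b_\perp$, each of norm at most $\|P_Wb\|^{k-1}$. This yields the factor $k\theta^{2(k-1)}$ on the angle event.

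The cross terms between different $s$ are handled by Cauchy--Schwarz, $\|\sum_s\cdot\|^2\le N\sum_s\|\cdot\|^2$. Summing over at most $N$ readouts gives $N^2$. On the complement of the angle event, use the trivial bound $\|P\,b_s^{\otimes k}\|\le1$ with probability $\zeta$. Together these give~\eqref{eq:overview-kl-bound}.

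\emph{Main obstacle.} The delicate point is Step 2's bookkeeping: showing that summing the innovation projections over $t$ for a fixed hidden $s$ telescopes into a single projection onto $V_k(W_s)$. The time-dependence of which readouts are hidden must not break this telescoping. I would handle it by splitting time at each disclosure epoch $sj_\star$ and using orthogonality of the $\Delta V_{k,t}$ within each epoch. Within an epoch the set of hidden readouts is fixed, so each hidden $s$ contributes at most $\|P_{V_k(W_s)}b_s^{\otimes k}\|^2$ per epoch in which it is hidden. This accounting also explains why the bound carries the factor $N^2$ rather than $N$.
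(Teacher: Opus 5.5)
Your Step~1 follows the paper's route: common covariance and affine support $L_{k,t}M_{k,t-1}+\range(B_{k,t})$, mean difference $B_{k,t}S_{k,\hid}(t)$, $B^*(BB^*)^\dagger B=P_{\range(B^*)}$, and $\range(B_{k,t}^*)=\Delta V_{k,t}$. It is fine.

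The gap is in Step~2, at the point you call the main obstacle. Your resolution charges each hidden readout up to $\|P_{V_k(W_s)}b_s^{\otimes k}\|_F^2$ once \emph{per disclosure epoch} in which it is hidden. Readouts with $s>N/2$ stay hidden through all of the roughly $N/2$ epochs before time $T$. You also pay the Cauchy--Schwarz factor $N$ for the cross terms. This bookkeeping therefore gives a bound of order $N^3\sum_k k w_k^2\sigma_k^{-2}(\theta^{2(k-1)}+\zeta)$, not the stated $N^2$. The loss matters downstream: with $N\asymp_p\eps^{-4/3}$ and $K=\lceil\eps^{-14}\rceil$, the cubic term $\eps^{-13}/K$ would become $\eps^{-43/3}/K\asymp\eps^{-1/3}$ instead of order $\eps$. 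Your remark that the epoch accounting explains the factor $N^2$ misattributes it. One factor of $N$ is the Cauchy--Schwarz constant and the other is the number of readouts, each charged exactly once.

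The obstacle you anticipate disappears, and no epoch splitting is needed. Apply Cauchy--Schwarz at each fixed $t$ with the uniform constant $N\ge|\{s:t<\tau_s\}|$, and only then exchange the sums:
\[
 \sum_{t=1}^T\|P_{\Delta V_{k,t}}S_{k,\hid}(t)\|_F^2
 \le Nw_k^2\sum_{s=1}^N\sum_{t\le\ell_s}\|P_{\Delta V_{k,t}}b_s^{\otimes k}\|_F^2
 =Nw_k^2\sum_{s=1}^N\|P_{V_{k,\ell_s}}b_s^{\otimes k}\|_F^2,
\]
where $\ell_s=\min\{T,\tau_s-1\}$. After Cauchy--Schwarz each summand involves a single readout, so the time-varying hidden set only truncates that readout's time range. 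The equality is the pathwise orthogonal decomposition $V_{k,\ell_s}=\bigoplus_{t\le\ell_s}\Delta V_{k,t}$, which uses orthogonality across all times, not merely within an epoch. The rank-one lemma with $W_s=\Span\{z_1,\ldots,z_{\ell_s}\}$, together with the angle event under the reference law, then gives \eqref{eq:overview-kl-bound}.

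There is also a small point in your rank-one lemma. Write $a=\|P_Wb\|$. If you bound each one-$b_\perp$ term only by $a^{k-1}$ and add $(P_Wb)^{\otimes k}$, you get $a^{2k}+ka^{2(k-1)}$, which exceeds $ka^{2(k-1)}$. You need $\|b_\perp\|^2=1-a^2$ and the mutual orthogonality of the $k+1$ pieces to obtain $a^{2k}+ka^{2(k-1)}(1-a^2)\le ka^{2(k-1)}$.
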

\begin{proof}[Proof outline; full proof in Appendix~\ref{app:gaussian}]
At a common history the two Gaussian kernels have the same affine support.
Their mean difference is $B_{k,t}S_{k,\hid}(t)$, and
$B^*(BB^*)^\dagger B=P_{\range(B^*)}=P_{\Delta V_{k,t}}$ yields the
conditional KL. Inductive absolute continuity justifies evaluating the true
kernel on reference histories. A fully repeated measurement has $B=0$ and
returns its old answer exactly. For a unit $b$,
$\|P_{\mathcal V_k(W)}b^{\otimes k}\|_F^2
\le k\|P_Wb\|^{2(k-1)}$, where $\mathcal V_k(W)$ is the full observation
space over $W$. Orthogonality of the increments $\Delta V_{k,t}$ telescopes
the time sum before each disclosure. Cauchy--Schwarz over readouts gives
\eqref{eq:overview-kl-bound}, with no additional factor of $T$.
\end{proof}

The reference can be sampled using independent physical Gaussian innovation
seeds and visible readouts only. After fixing those seeds, it is an allowed
policy for the geometric argument. This establishes the required compatibility
between the two parts of the proof; it does not condition on the full static
tensors or give the algorithm the entire quadratic matrix.

\section{Proof of Theorem~\ref{thm:matching}}\label{sec:main-proof}
We assemble the scales here; Appendix~\ref{sec:assembly} records the full
experiment and quantifier conversion. Take
\begin{equation}\label{eq:overview-final-scales}
 T=\lfloor Nj_\star/2\rfloor,\qquad L_{\mathrm{cand}}=T+1,
 \qquad K=\lceil\eps^{-14}\rceil,\qquad\zeta=K^{-4}.
\end{equation}
For sufficiently small $\eps$, $T+1<Nj_\star$ and
$T=\Theta_p(\eps^{-5/3})$. By construction,
$w_3=O_p(\eps^{1/3})$, $w_4=O_p(1)$, and
$d=(K+1)NM=O_p(\eps^{-47/3})$. Substituting
$\sigma_k=\delta_k/(a_k\sqrt d)$ and~\eqref{eq:overview-angle} into
\eqref{eq:overview-kl-bound} gives
\begin{equation}\label{eq:overview-final-kl}
 \KL(\Pref\Vert\Ptrue)
 \le C_p\left(\frac{\eps^{-13}}K+
 \frac{\eps^{-52/3}}{K^2}+\frac{\eps^{-37/3}}{K^3}\right)
 =O_p(\eps).
\end{equation}
Also $\theta^2=O_p(\eps^{37/3})$ and
$R\theta=O_p(\eps^{31/6})=o(\eps)$. Thus on the reference geometric event,
all undisclosed readouts satisfy $|Ab_s^\top z_t|\le r=A$, and the final
output obeys~\eqref{eq:overview-certificate}.

Let $\mathcal B$ be the event that a hidden readout exits its core or that
the terminal readout exceeds $h_{\mathrm{cert}}$. It is measurable from the
joint rotation and transcript. The reference bound and Pinsker's inequality
give
\begin{equation}\label{eq:overview-bad-event}
 \Ptrue(\mathcal B)\le\zeta+
 \sqrt{\KL(\Pref\Vert\Ptrue)/2}.
\end{equation}
Using the same static instance and seed, couple the assembled true-channel
run and the original fixed-function run until their first hidden core exit.
Their answers agree exactly before that exit. Consequently, actual success
on $\mathcal G_Z$ implies $\mathcal B$ in the coupled channel run. Only now
condition on the tensor norm event:
\begin{equation}\label{eq:overview-success}
 \Pr_{f\sim\mathcal D_{p,\eps},\omega}
       \{\|\nabla f(x_{\mathrm{out}})\|\le\eps\}
 \le\frac{\zeta+\sqrt{\KL(\Pref\Vert\Ptrue)/2}}{1-4e^{-4d}}
 =O_p(\sqrt\eps)\le\tfrac13.
\end{equation}
No truncated Gaussian is used in the posterior calculation. The distribution
is fixed for all policies; the auxiliary geometric coupling may depend on the
policy, with a uniform probability bound.

For per-instance almost-sure query caps, truncate the policy after $T$ calls
on every possible answer history. On each admissible fixed function, this
changes behavior only on a null set of private seeds; integration over the
hard distribution preserves that fact. Averaging then rules out success
$2/3$ on every fixed function. Choosing $c_p$ below the asymptotic coefficient
in $T$ proves the lower bound.

For the upper bound, Theorem~2 of~\cite{carmon} applies with $L_1=L_3=1$.
To cover arbitrarily small actual initial gaps uniformly, simulate it on
$\widetilde f(x,t)=f(x)+(t-1)^2/2$ from $(0,0)$. Its gap lies in $[1/2,3/2]$,
its smoothness constants are unchanged, and a lifted query requires one
original query. Since
$\|\nabla\widetilde f(x,t)\|^2=\|\nabla f(x)\|^2+(t-1)^2$,
a lifted stationary point yields an original one. Appendix~\ref{sec:upper}
checks the accuracy condition and query accounting, giving the last
inequality in~\eqref{eq:main-rate}.\hfill$\square$

\section{Discussion and limitations}\label{sec:discussion}
The result separates higher-order smoothness from higher-order information:
under any fixed finite collection of normalized smoothness budgets, arbitrary
private randomization does not improve the dimension-uniform first-order
accuracy exponent beyond $5/3$. The mechanism is information-theoretic, not a
restriction to a particular iteration family or to finite-precision queries.
The general Gaussian comparison also isolates what must be controlled when
static perturbations are used in exact-oracle lower bounds: the new observation
subspace, not the size of a raw response.

Several quantitative questions remain. The upper-bound logarithm is not
removed. The dimension $O_p(\eps^{-47/3})$ is sufficient and deliberately
conservative; it is not a matching dimension-accuracy tradeoff. The constants
are not uniform in $p$, and the theorem does not address analytic classes with
one common bound at all orders. We normalize the smoothness budgets and do
not claim a complete parameter-sensitive phase diagram. Finally, the hard
distribution establishes worst-case query complexity, not a practical runtime
barrier or an empirical claim about typical optimization objectives.

\phantomsection
\addcontentsline{toc}{section}{References}
\begingroup
\small
\bibliographystyle{unsrt}
\bibliography{references}
\endgroup

\clearpage
\appendix
\section{The analytic block-chain construction}\label{app:analytic}
This appendix proves the deterministic properties used in
Section~\ref{sec:construction}. The block architecture follows~\cite{zhou};
the exact-core potential, its calibration, and the needed residual identities
are given explicitly here. The proof uses full multilinear operator bounds,
which imply the directional bounds in~\eqref{eq:function-class}.

\subsection{A smooth potential with an exact quartic core}
Fix an even function $\chi\in C^\infty(\R;[0,1])$ satisfying
$\chi(t)=1$ for $|t|\le1$ and $\chi(t)=0$ for $|t|\ge2$.
Such a cutoff can, for example, be constructed from
$b(t)=\exp(-1/t)\mathbf 1_{\{t>0\}}$ and a smooth step function.
Define
\[
 W(t)=\chi(t)+(1-\chi(t))t^{-2}\quad(t\ne0),\qquad W(0)=1.
\]
The apparently singular second summand vanishes identically near zero.
For $r\ge1$, let
\begin{equation}\label{ana:eq:primitive}
 \phi_r(u)=120\int_1^u v^2(v-1)W(v/r)\,dv.
\end{equation}

\begin{lemma}[Exact core and derivative bounds]\label{ana:lem:primitive}
The function $\phi_r$ is $C^\infty$ and satisfies
\begin{align}
 \phi_r(u)&=30u^4-40u^3+10 &&(|u|\le r),\label{ana:eq:core}\\
 \phi_r'(u)&=120r^2(u-1) &&(|u|\ge2r),\label{ana:eq:tail}\\
 \inf_u\phi_r(u)&=\phi_r(1)=0,\qquad
 \phi_r(0)=10,\qquad \phi_r'(0)=\phi_r'(1)=0.
\end{align}
For every integer $q\ge1$, a finite constant $C_q$ satisfies
\begin{equation}\label{ana:eq:primitive-derivatives}
 \|\phi_r^{(q+1)}\|_\infty
 =\Lip(\phi_r^{(q)})\le C_q r^{3-q}.
\end{equation}
Moreover,
\begin{equation}\label{ana:eq:radial-primitive}
 u\phi_r'(u)\ge-\frac{405}{32}\qquad(u\in\R).
\end{equation}
\end{lemma}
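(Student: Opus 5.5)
The plan is to read everything off the integrand $g(v)=120v^2(v-1)W(v/r)$, since $\phi_r'=g$.

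\textbf{Smoothness and the two explicit regimes.} $W$ is $C^\infty$: near $0$ it equals $\chi\equiv1$, and away from $0$ both summands are smooth. Hence $g$ and $\phi_r$ are $C^\infty$. For $|v|\le r$ we have $W(v/r)=1$, so
\[
\phi_r(u)=120\int_1^u(v^3-v^2)\,dv=30u^4-40u^3+10 .
\]
This gives the core formula together with $\phi_r(0)=10$, $\phi_r(1)=0$ and $\phi_r'(0)=\phi_r'(1)=0$. For $|v|\ge2r$ we have $W(v/r)=r^2/v^2$, so $g(v)=120r^2(v-1)$, which is \eqref{ana:eq:tail}.

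\textbf{Sign, minimum, and the radial bound.} Since $W>0$ everywhere (a convex combination of $1$ and $t^{-2}>0$), $g$ has the sign of $v-1$. Thus $\phi_r$ decreases on $(-\infty,1]$ and increases on $[1,\infty)$, so $\inf\phi_r=\phi_r(1)=0$. For \eqref{ana:eq:radial-primitive}, observe that $u\phi_r'(u)=120u^3(u-1)W(u/r)$ is nonnegative unless $0<u<1$. On $(0,1)\subset[-r,r]$ we have $W=1$, so it suffices to minimize $120(u^4-u^3)$ there. The minimum is at $u=3/4$ with value $120\cdot(-27/256)=-405/32$.

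\textbf{Derivative bounds.} This is the only step requiring care. Write $W(v/r)=\chi(v/r)+(1-\chi(v/r))r^2v^{-2}$, and split
\[
g=120\,v^2(v-1)\chi(v/r)\;+\;120\,r^2(v-1)\bigl(1-\chi(v/r)\bigr).
\]
Apply Leibniz to each piece for $g^{(q)}=\phi_r^{(q+1)}$ with $q\ge1$.

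For the first piece, the support is $|v|\le2r$. A term in which $m$ derivatives fall on $v^2(v-1)$ is $O(r^{3-m})$ there, and the remaining $q-m$ derivatives fall on $\chi(v/r)$ and cost $r^{-(q-m)}$. Every term is therefore $O(r^{3-q})$.

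For the second piece, at most one derivative can usefully hit $(v-1)$. If none does, the $q$ derivatives of $1-\chi(v/r)$ are supported on $r\le|v|\le2r$, where $|v-1|\le3r$, giving $O(r^2\cdot r\cdot r^{-q})$. If one does, the remaining $q-1$ derivatives give $O(r^2\,r^{-(q-1)})$ when $q\ge2$. When $q=1$, that term is $120r^2(1-\chi)$, which is $O(r^2)=O(r^{3-1})$. All cases are $O(r^{3-q})$ with constants depending only on $q$ and $\chi$, using $r\ge1$.

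The identity $\|\phi_r^{(q+1)}\|_\infty=\Lip(\phi_r^{(q)})$ is standard for $C^1$ functions on $\R$.

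The main obstacle is simply this bookkeeping in the transition annulus $r\le|v|\le2r$. There the polynomial growth must be balanced against the $r^{-1}$ cost of each cutoff derivative. Otherwise the lemma is elementary.
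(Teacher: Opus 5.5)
Your proof is correct. For the core formula, the tail formula, the sign and minimum argument, and the radial bound, it coincides with the paper's argument. (One step is left implicit: for $|u|\le r$, the whole segment between $1$ and $u$ lies in $[-r,r]$, which uses $r\ge1$.)

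You take a different route only for the derivative bound. You split $\phi_r'$ into the compactly supported piece $120v^2(v-1)\chi(v/r)$ and the tail piece $120r^2(v-1)(1-\chi(v/r))$. You then use Leibniz to track how many derivatives land on the polynomial and how many on the rescaled cutoff.

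The paper instead uses the exact scaling identity $\phi_r'(u)=120\bigl[r^3A_3(u/r)-r^2A_2(u/r)\bigr]$. Here $A_3(v)=v^3W(v)$ and $A_2(v)=v^2W(v)$ are fixed, $r$-independent profiles. Outside $[-2,2]$ they equal $v$ and $1$, so their derivatives of every positive order are bounded. Differentiating $q$ times gives $r^{3-q}A_3^{(q)}(u/r)-r^{2-q}A_2^{(q)}(u/r)$ immediately, with $C_q=120\bigl(\|A_3^{(q)}\|_\infty+\|A_2^{(q)}\|_\infty\bigr)$.

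The scaling argument builds in the homogeneity that your case analysis checks by hand. It therefore removes the transition-annulus bookkeeping you identified as the main obstacle. Your decomposition has two mild advantages: it avoids the formally singular $t^{-2}$ term entirely, and it expresses $C_q$ directly through $\|\chi^{(j)}\|_\infty$ for $j\le q$.
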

\begin{proof}
The cutoff is constant near its gluing points, so all formulas define smooth
functions. Since $r\ge1$, integration between $1$ and any $u\in[-r,r]$
uses $W=1$, proving \eqref{ana:eq:core}. Formula \eqref{ana:eq:tail} follows from
$W(t)=t^{-2}$ outside $[-2,2]$. The function $W$ is strictly positive;
therefore $\phi_r'(u)$ has the sign of $u-1$, except for its zero at $u=0$.
This proves the minimum and endpoint assertions.

Put $v=u/r$, $A_3(v)=v^3W(v)$, and $A_2(v)=v^2W(v)$.
Then
\[
 \phi_r'(u)=120\bigl[r^3A_3(v)-r^2A_2(v)\bigr].
\]
Both $A_3$ and $A_2$ are smooth; outside $[-2,2]$ they equal $v$ and $1$,
respectively. Thus their derivatives of every positive order are bounded.
One may take
\[
 C_q=120\bigl(\|A_3^{(q)}\|_\infty+
                   \|A_2^{(q)}\|_\infty\bigr).
\]
The factor $r^{2-q}$ is at most $r^{3-q}$ because $r\ge1$.
Finally $u\phi_r'(u)\ge0$ unless $0<u<1$. On that interval the core is exact,
and $120u^3(1-u)$ is maximized at $u=3/4$, with value $405/32$.
\end{proof}

\subsection{The scalar and block gradient certificates}
Only the parameter-one scalar transition certificate is needed for the
normalized block chain. The following direct proof establishes that certificate
without making any claim about the more general parameter-$\mu$ statement.

\begin{lemma}[Scalar transition]\label{ana:lem:scalar}
Let $N\ge1$, let $u_0=1$, $u_{N+1}=0$, and suppose $u_N<1/10$.
For $s\in[N]$, set
\[
 d_s=2u_s-u_{s-1}-u_{s+1}+\phi_r'(u_s).
\]
Then $\|d\|_2\ge1/4$. In particular this holds if $u_N=0$.
\end{lemma}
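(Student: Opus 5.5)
The plan is a contradiction argument that runs through the forward differences. Suppose $\|d\|_2<1/4$, so that every $|d_s|<1/4$ and, by Cauchy--Schwarz, any sum of $m$ consecutive $d_s$ is smaller than $\sqrt m/4$. Put $\Delta_s=u_s-u_{s+1}$ for $0\le s\le N$. The discrete Laplacian is then $2u_s-u_{s-1}-u_{s+1}=\Delta_s-\Delta_{s-1}$, and the definition of $d_s$ becomes the first-order recursion
\[
 \Delta_s=\Delta_{s-1}+d_s-\phi_r'(u_s),\qquad 1\le s\le N .
\]
This is exactly the discrete equation of motion $\ddot u=\phi'(u)$ with a small forcing $d$, so $\phi_r$ acts as an inverted potential. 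The boundary data say that the trajectory starts at the critical value $u_0=1$, reaches $u_N<1/10$, and ends at $u_{N+1}=0$.

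The scalar input comes from Lemma~\ref{ana:lem:primitive}. The relevant values of $u$ lie in $[-r,r]$; this needs its own check (see the last paragraph), and there the core formula gives $-\phi_r'(u)=120u^2(1-u)$. This quantity is nonnegative for every $u<1$, strictly positive on $(0,1)$, at least $120\cdot 10^{-2}\cdot\tfrac{9}{10}=1.08$ on $[1/10,9/10]$, and much larger near $u=2/3$. Hence, whenever $u_s\in[1/10,9/10]$, each step raises $\Delta$ by at least $1.08-1/4>0.8$. Whenever $u_s<1$ the increment is at least $d_s>-1/4$, and the cumulative forcing over any window is controlled by the $\sqrt m/4$ bound.

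The core of the argument is a two-sided trapping. I would let $p$ be the last index in $\{0,\dots,N\}$ with $u_p\ge 9/10$; it exists because $u_0=1$, and $p<N$ because $u_N<1/10$. Let $q>p$ be the first index with $u_q<1/10$, so $q\le N$ and the indices $p<s<q$ lie in the middle band $[1/10,9/10)$.
\begin{itemize}
\item \emph{Before $p$.} Summing the recursion down to $s=1$ from the start $u_0=1$ should give $\Delta_p\gtrsim -\sqrt p/4$. Near $u\approx1$ the term $\phi_r'$ is small, so this needs care, and here I would use the full $\ell^2$ budget rather than the sup bound.
\item \emph{Across the band.} On $p<s<q$, $\Delta$ increases by at least $0.8$ per step. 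The band therefore has length $O(1)$, and $\Delta_{q-1}$ exceeds a fixed positive constant (at least about $0.8$ from the band crossing alone).
\item \emph{Below the band.} After $q$, while $u_s<1$, the term $-\phi_r'(u_s)$ stays nonnegative, so $\Delta$ can fall only through the forcing, which is less than $1/4$ in total in $\ell^2$ and $\sqrt m/4$ over $m$ steps. Hence $u$ keeps decreasing with a slope bounded below. When $u$ becomes negative, $-\phi_r'(u)=120u^2(1-u)$ grows again and pushes $\Delta$ up further.
\end{itemize}
This contradicts the terminal condition. Summing the decrements gives $u_{N+1}=u_q-\sum_{s=q}^N\Delta_s$. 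If $q<N+1$, this sum is bounded below by a positive multiple of $N+1-q$ minus the forcing, which should force $u_{N+1}<0$ unless $q=N$. If $q=N$, then $\Delta_N=u_N-0<1/10$, which is incompatible with $\Delta_{N-1}$ large together with $|d_N|<1/4$ and $-\phi_r'(u_N)\ge0$.

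The main obstacle is the first bullet, controlling the start of the trajectory near $u=1$, where $\phi_r'$ vanishes and a sup-norm bound on $d$ gives nothing. I would handle it by linearizing, using $\phi_r''(1)=120$: for $u\ge1$ the term $\phi_r'(u)\ge 120(u-1)\cdot\min(u^2,\dots)$ is positive and drives $\Delta$ down, so an overshoot above $1$ is quickly reversed. Concretely I would bound $\sum_s (u_s-1)_+$ by pairing with $d$ and using $\|d\|_2<1/4$, in the spirit of a discrete energy identity
\[
 \tfrac12\Delta_s^2-\tfrac12\Delta_{s-1}^2\approx-\phi_r'(u_s)\Delta_s+d_s\Delta_s ,
\]
whose telescoped form compares $\tfrac12\Delta^2$ with $\phi_r(u)$, using $\phi_r(1)=0$ and $\phi_r(0)=10$. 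The energy version may in fact be the cleanest route for the whole proof: the drop from $\phi_r(1)=0$ to $\phi_r\approx10$ near $0$ must be paid for by $\tfrac12\Delta^2$ or by $\sum d_s\Delta_s$, and the boundary values $u_N<1/10$, $u_{N+1}=0$ make $\Delta_N$ too small to absorb it. One step in this route still needs checking: that the iterates remain in the core region where the quartic formula, or the linear tail of \eqref{ana:eq:tail}, keeps the sign information above.
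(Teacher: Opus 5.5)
Your forward-difference framework is viable, but as written it does not close, and the estimate you rely on for the final contradiction is too weak. In the ``below the band'' step you control the decay of $\Delta$ by the window bound $|\sum_{i=q}^{s}d_i|<\sqrt m/4$. This gives $\Delta_s\ge\Delta_{q-1}-\sqrt m/4$, which is positive only for $m\lesssim10$. Moreover, the forcing that enters $\sum_{s=q}^N\Delta_s$ is $\sum_i(N+1-i)d_i$, which $\|d\|_2<1/4$ controls only at order $m^{3/2}$. So your claim that this sum is ``a positive multiple of $N+1-q$ minus the forcing'', and therefore forces $u_{N+1}<0$, fails for long tails. It can only be rescued by quantifying the push from $-\phi_r'$, which you only gesture at.

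Conversely, the step you call the main obstacle is not one. Since $p$ is the last index with $u_p\ge9/10$ and $p<N$, you have $u_{p+1}<9/10\le u_p$, so $\Delta_p>0$ for free. The bound $\Delta_p\gtrsim-\sqrt p/4$ you plan to extract by an energy argument is therefore unnecessary and weaker. No core-confinement check is needed either. By Lemma~\ref{ana:lem:primitive}, $W>0$, so $\phi_r'$ has the sign of $u-1$ everywhere. Also $u^2W(u/r)\ge\min\{u^2,r^2\}$, since $W(t)$ is a convex combination of $1$ and $t^{-2}$, and this gives a quantitative bound for negative $u$. The exact quartic core is used only on $[1/10,9/10]\subset[-r,r]$.

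The repair uses only the pointwise bound $|d_s|<1/4$. The choice of $p,q$ and your band estimate give $\Delta_{q-1}>4/5$. Then $-\phi_r'(u_q)\ge0$ gives $\Delta_q>11/20$. If $q=N$, this contradicts $\Delta_N=u_N<1/10$. If $q<N$, then $u_{q+1}<-9/20$. Whenever $u_s\le-9/20$ one has $-\phi_r'(u_s)\ge120\cdot\tfrac{81}{400}\cdot\tfrac{29}{20}>35$, so $\Delta_s>\Delta_{s-1}+34>0$. By induction $u$ keeps decreasing and $u_{N+1}<-9/20$, a contradiction.

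The paper's proof is even more local and never tracks the trajectory. It evaluates $d_s$ at three kinds of index:
\begin{itemize}
\item At the global minimum of $u_0,\dots,u_{N+1}$, a nonpositive Laplacian together with $\phi_r'\le-1.32$ rules out any value $\le-1/10$.
\item At any index with $u_s\in[1/10,9/10]$, one gets $d_s\le-0.68$ because the neighbours exceed $-1/10$.
\item At the first index below $1/10$, the predecessor exceeds $9/10$ and the successor exceeds $-1/10$, giving $d_j<-3/5$.
\end{itemize}
Both arguments actually yield $\|d\|_\infty\ge1/4$; neither the $\ell^2$ window bounds nor the discrete energy identity is needed.
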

\begin{proof}
Suppose $\|d\|_2<1/4$, so every $|d_s|<1/4$.
Since $W$ is a convex combination of $1$ and $t^{-2}$,
\[
 u^2W(u/r)\ge\min\{u^2,r^2\}.
\]
If the minimum of $u_0,\ldots,u_{N+1}$ were at most $-1/10$, it would
occur at an index $s\in[N]$. Its discrete Laplacian would be nonpositive,
whereas
\[
 \phi_r'(u_s)
 =-120u_s^2(1-u_s)W(u_s/r)\le-120\cdot\frac1{100}\cdot\frac{11}{10}
 =-1.32.
\]
This contradicts $|d_s|<1/4$. All coordinates are consequently greater than
$-1/10$.

If $u_s\in[1/10,9/10]$, the exact core and the preceding lower bound on
neighbors give
\[
 d_s\le\frac15-u_s\bigl(120u_s(1-u_s)-2\bigr)
 \le\frac15-\frac{44}{5}u_s\le-0.68.
\]
Here $u_s(1-u_s)\ge9/100$ on the indicated interval. Thus every coordinate
$u_s$, $s\in[N]$, is either below $1/10$ or above $9/10$.
There is a first index $j$ below $1/10$, because $u_N<1/10$.
Its predecessor is greater than $9/10$, its successor is greater than
$-1/10$, and $\phi_r'(u_j)\le0$. Therefore
\[
 d_j<\frac15-\frac9{10}+\frac1{10}=-\frac35,
\]
again a contradiction.
\end{proof}

Let $H\in\R^{M\times M}$ be positive definite, $H\succeq\gamma I$,
let $\|e\|=1$, and let $a\ne0$. Write $A=\|a\|$ and assume
\[
 \alpha=a^\top H^{-1}e>0,\qquad
 \beta=a^\top H^{-1}a,\qquad \rho=e^\top H^{-1}e.
\]
Set
\[
 \lambda=\alpha^{-1},\qquad \tau=\beta^{-1},\qquad
 \kappa=\rho/\alpha^2.
\]
For $z=(z_1,\ldots,z_N)\in(\R^M)^N$, define $u_s=a^\top z_s$,
$u_0=1$, and
\begin{equation}\label{ana:eq:block}
 F(z)=\sum_{s=1}^N\left[
 \frac12z_s^\top Hz_s-\lambda u_{s-1}e^\top z_s
 +\frac\kappa2u_s^2+\tau\phi_r(u_s)\right].
\end{equation}
Write its quadratic and affine part as
\begin{equation}\label{ana:eq:Q-c}
 F(z)=\frac12z^\top Qz+c^\top z+\tau\sum_{s=1}^N\phi_r(u_s),
 \qquad c=(-\lambda e,0,\ldots,0).
\end{equation}

\begin{proposition}[Block certificate and analytic bounds]\label{ana:prop:block}
If $u_N=0$, then
\begin{equation}\label{ana:eq:block-cert}
 \|\nabla F(z)\|
 \ge\frac{\sqrt\gamma}
 {4\left(\sqrt\beta+\beta\sqrt\rho/\alpha\right)}.
\end{equation}
The matrix $Q$ is positive semidefinite, $c\in\operatorname{range}(Q)$, and
\begin{equation}\label{ana:eq:pseudoinverse}
 c^\top Q^\dagger c\le\kappa.
\end{equation}
Furthermore,
\begin{align}
 F(0)-\inf F&\le\overline\Delta:=10N\tau+\kappa/2,\label{ana:eq:gap}\\
 \|Q\|&\le L_Q:=\|H\|+2\lambda A+\kappa A^2,\label{ana:eq:Qnorm}\\
 \|D^2F\|_\infty&\le L_Q+C_1\tau A^2r^2,\label{ana:eq:hess}\\
 \|D^{q+1}F\|_\infty&\le C_q\tau A^{q+1}r^{3-q}\quad(q\ge2),\label{ana:eq:high}\\
 \inner{z}{\nabla F(z)}&\ge-\kappa/4-\frac{405}{32}N\tau
 \ge-\frac{81}{64}\overline\Delta.\label{ana:eq:radial-F}
\end{align}
\end{proposition}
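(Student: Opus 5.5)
The plan is to reduce the block certificate \eqref{ana:eq:block-cert} to the scalar transition of Lemma~\ref{ana:lem:scalar}. The reduction works by solving the block stationarity equations for $z_s$ through $H^{-1}$, so that the scalar residual $d_s$ becomes a fixed linear image of the block gradients. The remaining claims are then direct completions of squares and derivative counts using Lemma~\ref{ana:lem:primitive}.

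\textbf{Step 1: an exact residual identity.} Write $g_s=\nabla_{z_s}F(z)$, with the convention $z_{N+1}=0$. Differentiating \eqref{ana:eq:block} gives
\[
g_s=Hz_s-\lambda u_{s-1}e-\lambda(e^\top z_{s+1})a+\kappa u_sa+\tau\phi_r'(u_s)a .
\]
Solve for $z_s=H^{-1}(\cdots)$ and apply $a^\top$ and $e^\top$. Using $\lambda\alpha=1$, $\beta\tau=1$ and $\lambda\rho=\alpha\kappa=\rho/\alpha$, this yields two scalar recursions:
\begin{align*}
u_s-u_{s-1}+\phi_r'(u_s)-m_s&=a^\top H^{-1}g_s,\\
m_{s-1}-m_s+\phi_r'(u_s)&=(\beta/\alpha)\,e^\top H^{-1}g_s,
\end{align*}
where $m_s:=(\beta/\alpha)e^\top z_{s+1}-\beta\kappa u_s$. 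Subtract the first recursion at $s+1$ from the first at $s$, and eliminate $m_{s+1}-m_s$ with the second recursion. This should give, with $u_{N+1}=0$,
\[
d_s=a^\top H^{-1}(g_s-g_{s+1})+(\beta/\alpha)\,e^\top H^{-1}g_{s+1},
\]
where $g_{N+1}:=0$. The terminal index $s=N$ has to be checked separately: there the absence of $z_{N+1}$ makes $m_N=-\beta\kappa u_N$, and $u_N=0$ must reproduce the boundary value $u_{N+1}=0$. I expect this boundary bookkeeping to be the main obstacle, since the identity has to hold with the terminal term intact rather than only up to an error.

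\textbf{Step 2: the certificate.} Bound the scalar terms by
\[
|a^\top H^{-1}g|\le\|H^{-1/2}a\|\,\|H^{-1/2}g\|\le\sqrt{\beta/\gamma}\,\|g\|,
\qquad
|e^\top H^{-1}g|\le\sqrt{\rho/\gamma}\,\|g\|,
\]
which uses $H\succeq\gamma I$. Summing over $s$ then gives
\[
\|d\|\le\bigl(c\sqrt\beta+\beta\sqrt\rho/\alpha\bigr)\|\nabla F(z)\|/\sqrt\gamma
\]
for a small absolute constant $c$. Lemma~\ref{ana:lem:scalar} gives $\|d\|\ge1/4$, and combining the two yields \eqref{ana:eq:block-cert}. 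Getting exactly $c=1$ needs a sharper step: either write the $a$-part as a single difference operator applied to $(a^\top H^{-1}g_s)$, or absorb the constant into the final normalisation.

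\textbf{Step 3: the remaining analytic bounds.} For positive semidefiniteness, note that for each $s$,
\[
\tfrac12 z_s^\top Hz_s-\lambda u_{s-1}e^\top z_s\ge-\tfrac12\lambda^2\rho\,u_{s-1}^2=-\tfrac\kappa2u_{s-1}^2 .
\]
The quadratic part of $F$ is this expression summed over $s\ge2$, together with $\tfrac12z_1^\top Hz_1$ and $\tfrac\kappa2\sum_{s\ge1}u_s^2$. The $-\tfrac\kappa2u_{s-1}^2$ terms then telescope against the $+\tfrac\kappa2u_s^2$ terms, giving $z^\top Qz\ge0$.

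Keeping the $s=1$ term with $u_0=1$ shows $\tfrac12z^\top Qz+c^\top z\ge-\kappa/2$ for all $z$. This lower bound forces $c\in\range(Q)$ and $-\tfrac12c^\top Q^\dagger c\ge-\kappa/2$, which is \eqref{ana:eq:pseudoinverse}. Combined with $\phi_r\ge0$ and $F(0)=10N\tau$, the same bound gives \eqref{ana:eq:gap}.

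For $\|Q\|$, bound each block of $Q$ directly. For the derivatives, the nonlinear term is a sum of ridge functions in the disjoint variables $u_s$, so for $q\ge1$
\[
\|D^{q+1}F\|\le\tau A^{q+1}\|\phi_r^{(q+1)}\|_\infty ,
\]
and \eqref{ana:eq:primitive-derivatives} then gives \eqref{ana:eq:hess} and \eqref{ana:eq:high}.

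Finally, write
\[
\inner{z}{\nabla F(z)}=z^\top Qz+c^\top z+\tau\sum_s u_s\phi_r'(u_s).
\]
Minimising over the range of $Q$ gives $z^\top Qz+c^\top z\ge-\tfrac14c^\top Q^\dagger c\ge-\kappa/4$. Each $u_s\phi_r'(u_s)$ is at least $-405/32$ by \eqref{ana:eq:radial-primitive}. The last inequality in \eqref{ana:eq:radial-F} follows from $405/32=\tfrac{81}{64}\cdot10$ and $\kappa/4\le\tfrac{81}{128}\kappa$.
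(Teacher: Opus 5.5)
Steps 1 and 3 are correct and are essentially the paper's proof. Your identity $d_s=a^\top H^{-1}(g_s-g_{s+1})+(\beta/\alpha)e^\top H^{-1}g_{s+1}$ is the paper's identity $w_{\rm out}^\top g_s+\lambda w_{\rm in}^\top g_{s+1}=\tau d_s$, with $w_{\rm out}=H^{-1}a/\beta$ and $w_{\rm in}=H^{-1}(e-\alpha a/\beta)$, divided by $\tau$; you just reach it through different bookkeeping. The terminal index is not an obstacle. With no $z_{N+1}$ term one gets $a^\top H^{-1}g_N=d_N+(\beta\kappa-1)u_N$, which is the paper's $w_{\rm out}^\top g_N=\tau d_N+(\kappa-\tau)u_N$. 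So $u_N=0$ gives your formula with $g_{N+1}=0$. The completion of squares, the pseudoinverse bound, the gap, the derivative bounds and the radial inequality all work as you describe.

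The gap is the constant in Step 2. The statement asserts the coefficient $\sqrt\beta$ (your $c=1$), and your argument does not give it. The triangle inequality applied to $a^\top H^{-1}(g_s-g_{s+1})$ only yields $2\sqrt{\beta/\gamma}\,\norm{g}$, i.e.\ $c=2$. Neither remedy you propose fixes this:
\begin{itemize}
\item The forward difference $(x_s)\mapsto(x_s-x_{s+1})$ with $x_{N+1}=0$ has $\ell^2$ norm $2\cos(\pi/(2N+1))$, which tends to $2$, so it does not give $c=1$.
\item Absorbing the constant proves a weaker inequality than the one stated. That weaker bound would still suffice downstream, with $208$ replaced by $212$ in \eqref{ana:eq:green-gradient}, but it is not the proposition.
\end{itemize}
The missing idea is to pair $-a^\top H^{-1}g_{s+1}$ with the $e$-term rather than with $a^\top H^{-1}g_s$:
\[
 d_s=a^\top H^{-1}g_s+\frac\beta\alpha\Bigl(e-\frac\alpha\beta a\Bigr)^{\!\top}H^{-1}g_{s+1},
 \qquad
 \Bigl\|H^{-1/2}\Bigl(e-\frac\alpha\beta a\Bigr)\Bigr\|^2=\rho-\frac{\alpha^2}{\beta}\le\rho .
\]
The vector $H^{-1/2}(e-\alpha a/\beta)$ is the component of $H^{-1/2}e$ orthogonal to $H^{-1/2}a$. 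Removing the $a$-part therefore costs nothing in the bound for the $e$-term. Now apply Cauchy--Schwarz with $H\succeq\gamma I$, then the triangle inequality over the two sequences $(g_s)$ and $(g_{s+1})$. This gives $\norm{d}_2\le(\sqrt\beta+\beta\sqrt\rho/\alpha)\norm{g}_2/\sqrt\gamma$, and Lemma~\ref{ana:lem:scalar} yields \eqref{ana:eq:block-cert} with the stated constant. This regrouping is exactly why the paper introduces $w_{\rm in}$.
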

\begin{proof}
Let $g_s$ be the $s$th gradient block and put
\[
 w_{\rm out}=H^{-1}a/\beta,\qquad
 w_{\rm in}=H^{-1}(e-\alpha a/\beta).
\]
For $s<N$, direct substitution gives the algebraic identity
\[
 w_{\rm out}^\top g_s+\lambda w_{\rm in}^\top g_{s+1}
 =\tau(2u_s-u_{s-1}-u_{s+1}+\phi_r'(u_s))=\tau d_s.
\]
With $u_{N+1}=0$, the exact terminal identity is
\[
 w_{\rm out}^\top g_N=\tau d_N+(\kappa-\tau)u_N.
\]
The additional term vanishes under the hypothesis $u_N=0$. Hence
\[
 \tau\|d\|_2\le
 (\|w_{\rm out}\|+\lambda\|w_{\rm in}\|)\|g\|_2.
\]
Also
\[
 \|w_{\rm out}\|\le(\gamma\beta)^{-1/2},\qquad
 \lambda\|w_{\rm in}\|
 \le\frac{\sqrt\rho}{\alpha\sqrt\gamma}.
\]
Lemma~\ref{ana:lem:scalar} proves \eqref{ana:eq:block-cert}.

Completing the square phase by phase yields
\[
 \frac12z_s^\top Hz_s-\lambda u_{s-1}e^\top z_s
 +\frac\kappa2u_s^2
 =\frac12\|z_s-\lambda u_{s-1}H^{-1}e\|_H^2
 +\frac\kappa2(u_s^2-u_{s-1}^2).
\]
Consequently the quadratic and affine part is bounded below by $-\kappa/2$.
A quadratic polynomial bounded below has $Q\succeq0$,
$c\in\operatorname{range}(Q)$, and minimum $-c^\top Q^\dagger c/2$.
This proves \eqref{ana:eq:pseudoinverse}. Since $\phi_r\ge0$ and
$F(0)=10N\tau$, it also proves \eqref{ana:eq:gap}.

The quadratic-form estimate \eqref{ana:eq:Qnorm} follows by bounding the
neighboring block cross terms with $2xy\le x^2+y^2$.
The readouts belong to orthogonal blocks. For arbitrary unit vectors
$v^{(0)},\ldots,v^{(q)}$ in the product space,
\[
 \sum_s\prod_{j=0}^q|a^\top v_s^{(j)}|
 \le\prod_{j=0}^q\left(\sum_s|a^\top v_s^{(j)}|^2\right)^{1/2}
 \le A^{q+1}.
\]
The first inequality is generalized H\"older with exponent $q+1$ in every
factor, followed by the monotonicity
$\|\cdot\|_{q+1}\le\|\cdot\|_2$ of finite-dimensional $\ell_r$ norms.
Together with Lemma~\ref{ana:lem:primitive}, this gives \eqref{ana:eq:hess}
and \eqref{ana:eq:high}.

Finally the minimum of $z^\top Qz+c^\top z$ is
$-c^\top Q^\dagger c/4\ge-\kappa/4$.
Adding $\tau\sum_su_s\phi_r'(u_s)$ proves the first radial inequality.
The second follows since $(405/32)/10=81/64$ and
$(\kappa/4)/(\kappa/2)=1/2\le81/64$.
\end{proof}

\subsection{Spectral decoys and the exact polynomial representation}
For any integer $K\ge1$, take $K+1$ copies of the quadratic matrix $Q$ and
activate the nonlinearity in one copy:
\begin{equation}\label{ana:eq:decoys}
 F_{\rm ext}(z^{(1)},\ldots,z^{(K+1)})
 =F(z^{(1)})+\frac12\sum_{j=2}^{K+1}(z^{(j)})^\top Qz^{(j)}.
\end{equation}
An arbitrary fixed orthogonal rotation of this objective is allowed.
Its quadratic matrix is an orthogonal conjugate of $I_{K+1}\otimes Q$, so every
eigenvalue of $Q$ has multiplicity multiplied by $K+1$.
Its linear vector is the correspondingly rotated $(c,0,\ldots,0)$.
We use $a_s$ for the rotated active readouts; they remain pairwise orthogonal,
with $\|a_s\|=A$.

All gap and radial estimates in Proposition~\ref{ana:prop:block} hold unchanged
for $F_{\rm ext}$. The zero-tail certificate also holds unchanged: its active
gradient component is $\nabla F$, and the decoy gradient components are
orthogonal to it. Because $\phi_r''(0)=0$, $D^2F(0)=Q$, so if
$\|D^2F\|_\infty\le B_2$, then $\|Q\|\le B_2$. Consequently
\begin{equation}\label{ana:eq:decoy-smooth}
 \|D^jF_{\rm ext}\|_\infty\le B_j\quad(2\le j\le p+1)
\end{equation}
whenever these bounds hold for $F$. None of these assertions depends on $K$.
The same conclusions permit extra zero quadratic directions.

Define the scalar correction
\[
 \mathcal C_r(u)=\phi_r(u)-(30u^4-40u^3+10).
\]
It and all its derivatives vanish on $[-r,r]$. Thus
\begin{align}\label{ana:eq:polynomial-plus-correction}
 F_{\rm ext}(y)
 &=\frac12y^\top Q_{\rm ext}y+c_{\rm ext}^\top y
 +\tau\sum_{s=1}^N[30(a_s^\top y)^4-40(a_s^\top y)^3+10]
 +\tau\sum_{s=1}^N\mathcal C_r(a_s^\top y).
\end{align}
If the undisclosed readouts satisfy $|a_s^\top y|\le r$, the last sum and
its gradient involve only the disclosed readouts. In particular, only the
\emph{undisclosed readouts} must lie in the exact polynomial core; the query
norm and already disclosed readouts need not do so. Once a readout is known,
its correction in the exact value-gradient oracle is explicitly computable.

\subsection{A simultaneous all-order calibration}
Here is one explicit scaling that leaves room for the later confinement map.
Fix an integer $p\ge3$ and a desired small derivative budget $b\in(0,1]$.
Let $\mathsf P_M$ be the path Laplacian with endpoint diagonal entries $1$,
interior diagonal entries $2$, and neighboring off-diagonal entries $-1$.

\begin{lemma}[Elementary path estimates]\label{ana:lem:green}
Let $0<h\le1/4$, $M=\lceil h^{-1}\rceil$, and
$H=L(h^2I+\mathsf P_M)$, $\gamma=Lh^2$.
Then
\[
 \|H\|\le4L,
 \qquad
 \frac1{9Lh}<(H^{-1})_{ij}\le\frac{16}{Lh}\quad(i,j\in[M]).
\]
Let $S=\{\lceil M/2\rceil,\ldots,M\}$, $s=|S|$, and
$a=A\mathbf1_S/\sqrt s$, $e=e_1$. The parameters of
Proposition~\ref{ana:prop:block} obey
\begin{align}
 \frac{A^2}{18\gamma}\le\beta\le\frac{A^2}{\gamma},\qquad
 \kappa\le\frac{2592\gamma}{A^2},\qquad
 2\lambda A\le18\sqrt2\gamma^{3/4}L^{1/4},\label{ana:eq:green-consequences}\\
 \frac{\sqrt\gamma}{4(\sqrt\beta+\beta\sqrt\rho/\alpha)}
 \ge\frac{\gamma}{208 A}.\label{ana:eq:green-gradient}
\end{align}
\end{lemma}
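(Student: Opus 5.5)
The plan has three parts: bound $\|H\|$ spectrally, get two-sided bounds on the entries of $H^{-1}$ from an explicit Green's-function formula, and then derive \eqref{ana:eq:green-consequences}--\eqref{ana:eq:green-gradient} by summing entries.

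\emph{Norm bound.} The eigenvalues of $\mathsf P_M$ are $2-2\cos(k\pi/M)$ for $k=0,\ldots,M-1$, so $\|\mathsf P_M\|=2+2\cos(\pi/M)=4-4\sin^2(\pi/(2M))$. It therefore suffices to show $h^2\le 4\sin^2(\pi/(2M))$. Since $M\le h^{-1}+1=(1+h)/h$, we have $\pi/(2M)\ge \pi h/(2(1+h))\le\pi/2$. Using $\sin x\ge 2x/\pi$ on $[0,\pi/2]$ gives $4\sin^2(\pi/(2M))\ge 4h^2/(1+h)^2\ge h^2$, because $1+h\le2$. Hence $\|H\|\le 4L$.

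\emph{Entry bounds for $H^{-1}=L^{-1}(h^2I+\mathsf P_M)^{-1}$.} I will use the explicit Neumann Green's function. Let $\theta>0$ solve $2\cosh\theta-2=h^2$. For $i\le j$,
\[
 G_{ij}=\frac{\cosh(\theta(i-\tfrac12))\,\cosh(\theta(M-j+\tfrac12))}{\sinh\theta\,\sinh(M\theta)},
\]
extended symmetrically. I will verify it by checking the interior recursion and the two boundary rows, where the half-shift encodes the endpoint diagonal entry $1$.

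For the estimates, note $h^2=4\sinh^2(\theta/2)$, so $\theta\le h$ and $\theta\ge h(1-O(h^2))$. Then $M\theta\le 1+h\le 5/4$ and $M\theta\ge h\cdot h^{-1}\approx1$. The numerator lies in $[1,\cosh^2(5/4)]$. The denominator is comparable to $\theta\sinh(M\theta)$, which is of order $h$. Tracking the constants through elementary inequalities such as $\sinh x\le x\cosh x$ and $\sinh x\ge x$ should give $1/(9h)<G_{ij}\le 16/h$. This constant bookkeeping is the main obstacle. If the explicit formula becomes unwieldy, I would fall back on an M-matrix/maximum-principle comparison with the row-sum identity $(h^2I+\mathsf P_M)^{-1}\mathbf 1=h^{-2}\mathbf 1$.

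\emph{Consequences.} Take $s=|S|$, so that $s\ge M/2\ge 1/(2h)$. Set $G=(h^2I+\mathsf P_M)^{-1}$, so $H^{-1}=G/L$, and write $\gamma=Lh^2$.
\begin{itemize}[leftmargin=*]
\item \emph{Bounds on $\beta$.} Summing the lower entry bound over $S\times S$ gives $\beta=(A^2/s)\sum_{i,j\in S}(H^{-1})_{ij}> A^2s/(9Lh)\ge A^2/(18\gamma)$. The upper bound $\beta\le A^2/\gamma$ is immediate from $H\succeq\gamma I$.
\item \emph{Bounds on $\alpha$ and $\rho$.} Similarly $\alpha=(A/\sqrt s)\sum_{j\in S}(H^{-1})_{1j}> A\sqrt s/(9Lh)$, and $\rho=(H^{-1})_{11}\le 16/(Lh)$.
\item \emph{Bound on $\kappa$.} $\kappa=\rho/\alpha^2\le 1296\,Lh/(A^2s)\le 2592\gamma/A^2$.
\item \emph{Bound on $\lambda A$.} $\lambda A=A/\alpha\le 9Lh/\sqrt s\le 9\sqrt2\,Lh^{3/2}=9\sqrt2\,\gamma^{3/4}L^{1/4}$.
\end{itemize}
For \eqref{ana:eq:green-gradient}, the bounds above give $\sqrt\beta\le A/\sqrt\gamma$ and $\beta\sqrt\rho/\alpha\le (A^2/\gamma)\cdot 36\sqrt{Lh}\cdot\sqrt{2h}/A=36\sqrt2\,A/\sqrt\gamma$. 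Hence
\[
 4\bigl(\sqrt\beta+\beta\sqrt\rho/\alpha\bigr)\le 4(1+36\sqrt2)\,A/\sqrt\gamma\le 208\,A/\sqrt\gamma,
\]
which yields $\sqrt\gamma/\bigl(4(\sqrt\beta+\beta\sqrt\rho/\alpha)\bigr)\ge\gamma/(208A)$.
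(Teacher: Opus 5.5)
Your proposal is correct and follows the paper's proof essentially step for step: the same eigenvalue argument for $\|H\|\le4L$, the same explicit Neumann Green's function, and the same entry summations for $\beta,\alpha,\rho,\kappa,\lambda$ ending in $4(1+36\sqrt2)<208$. Your $\cosh/\sinh$ formula is exactly the paper's formula written in $t=e^{-\theta}$, since $-\log t=2\operatorname{arsinh}(h/2)=\theta$. The constant bookkeeping you flagged closes with plenty of room and needs no fallback: from $\theta\in[h/2,h]$ and $M\theta\in[1/2,5/4]$ one gets $G_{ij}\le\cosh^2(5/4)/\bigl(\tfrac h2\sinh\tfrac12\bigr)<14/h$ and $G_{ij}\ge1/\bigl(\sinh h\,\sinh\tfrac54\bigr)>1/(2h)$.
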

\begin{proof}
The path Laplacian has eigenvalues
$2-2\cos(k\pi/M)$, $k=0,\ldots,M-1$. Hence
\[
 \lambda_{\max}(\mathsf P_M)
 =2+2\cos(\pi/M)
 =4-4\sin^2\!\frac{\pi}{2M}.
\]
Because $M=\lceil h^{-1}\rceil$ and $h\le1/4$, we have $M\ge4$ and
$h\le2/M$.  Concavity of $\sin$ on $[0,\pi/2]$ gives
$\sin(\pi/(2M))\ge1/M$, so
$h^2\le4/M^2\le4\sin^2(\pi/(2M))$. Therefore
\[
 \|H\|=L\bigl(h^2+\lambda_{\max}(\mathsf P_M)\bigr)\le4L.
\]
For the inverse estimates, put
$t=(2+h^2-h\sqrt{h^2+4})/2\in(0,1)$.
The inverse of $h^2I+\mathsf P_M$ has entries
\[
 \frac{t^{|i-j|+1}(1+t^{2(i\wedge j)-1})
 (1+t^{2(M-(i\vee j))+1})}{(1-t^2)(1-t^{2M})}.
\]
This formula follows by solving the second-order homogeneous recurrence and
the two endpoint conditions (or by direct multiplication).
Indeed $t/(1-t^2)=1/(h\sqrt{h^2+4})$ and
$-\log t=2\operatorname{arsinh}(h/2)\in[h/2,h]$.
The two factors in the numerator lie between $1$ and $2$,
$\sqrt{h^2+4}<3$, and $1-t^{2M}\ge1-e^{-1}>1/2$.
These inequalities give the stronger bounds
$e^{-h|i-j|}/(3Lh)$ and $16e^{-h|i-j|/2}/(Lh)$.
Since $h(M-1)<1$ and $e<3$, the displayed uniform bounds follow.

The rounding bounds give $s\ge1/(2h)$. Hence
\[
 \alpha\ge\frac{A\sqrt s}{9Lh},\qquad
 \rho\le\frac{16}{Lh},\qquad
 \beta\ge\frac{A^2s}{9Lh}\ge\frac{A^2}{18\gamma}.
\]
The upper bound on $\beta$ uses $H^{-1}\preceq\gamma^{-1}I$.
Substitution proves \eqref{ana:eq:green-consequences}.
Finally
\[
 \sqrt\beta\le\frac A{\sqrt\gamma},\qquad
 \frac{\beta\sqrt\rho}{\alpha}
 \le\frac{36\sqrt2 A}{\sqrt\gamma},
\]
and $1+36\sqrt2<52$ proves \eqref{ana:eq:green-gradient}.
\end{proof}

\begin{proposition}[Uniform smoothness and the intended horizon]\label{ana:prop:calibration}
Fix $G_0=8192$ and $L=b/16$. Let
$C_*^{\rm prim}=\max_{1\le q\le p}C_q$, and choose $a_0>0$ so small that
\[
 18G_0 C_*^{\rm prim} a_0^3\le b/4.
\]
For all sufficiently small $\eps>0$, set
\begin{align}\label{ana:eq:calibration}
 A=r=a_0\eps^{-1/3},\qquad
 \gamma=G_0a_0\eps^{2/3},\qquad
 h=\sqrt{\gamma/L},\qquad M=\lceil h^{-1}\rceil,\\
 a=A\mathbf1_S/\sqrt{|S|},\qquad
 N=\left\lfloor\frac{a_0}{1440G_0}\eps^{-4/3}\right\rfloor,
 \nonumber
\end{align}
and use the normalized $\alpha,\beta,\lambda,\tau,\kappa$ above.
Then $r\ge1$, $h\le1/4$, $N\ge1$, and
\begin{align}
 \sup_z\|D^jF(z)\|_{\op}&\le b&& (2\le j\le p+1),\label{ana:eq:small-base-budgets}\\
 \overline\Delta=10N\tau+\kappa/2&\le1/4,\label{ana:eq:small-base-gap}\\
 a^\top z_N=0&\Longrightarrow\|\nabla F(z)\|\ge32\eps.\label{ana:eq:large-base-cert}
\end{align}
In addition, $N=\Theta_{p,b}(\eps^{-4/3})$,
$M=\Theta_{p,b}(\eps^{-1/3})$, and $NM=\Theta_{p,b}(\eps^{-5/3})$.
The physical core width is exactly $r/A=1$.
\end{proposition}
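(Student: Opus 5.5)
The plan is to plug the scalings in \eqref{ana:eq:calibration} into Lemma~\ref{ana:lem:green} and Proposition~\ref{ana:prop:block}, checking each conclusion by exponent bookkeeping in $\eps$.

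\textbf{Side conditions.} Since $A=r=a_0\eps^{-1/3}$, we have $r\ge1$ for small $\eps$. Next, $h=\sqrt{\gamma/L}=\sqrt{16G_0a_0/b}\,\eps^{1/3}\le1/4$ for small $\eps$. Also $N\ge1$ because $\eps^{-4/3}\to\infty$. Hence $M=\Theta(\eps^{-1/3})$, $N=\Theta(\eps^{-4/3})$ and $NM=\Theta(\eps^{-5/3})$, with constants depending on $a_0,b,p$. The core width $r/A=1$ holds by definition.

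\textbf{Parameter sizes.} Lemma~\ref{ana:lem:green} applies with $\gamma=Lh^2$. It gives $\beta\ge A^2/(18\gamma)$, hence
\[
\tau\le 18\gamma/A^2=18G_0a_0^{-1}\eps^{4/3}.
\]
It also gives $\kappa\le2592\gamma/A^2=2592\,G_0a_0^{-1}\eps^{4/3}$, $\|H\|\le4L=b/4$, and
\[
2\lambda A\le18\sqrt2\,\gamma^{3/4}L^{1/4}=O(\eps^{1/2}).
\]

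\textbf{Derivative budgets.} For $q\ge2$, \eqref{ana:eq:high} gives
\[
\|D^{q+1}F\|\le C_q\tau A^{q+1}r^{3-q}=C_q\tau A^4\le 18G_0C_*^{\rm prim}a_0^3\le b/4.
\]
This uses the key cancellation $A=r$ and is where the choice of $a_0$ enters. For $j=2$, \eqref{ana:eq:hess} and \eqref{ana:eq:Qnorm} bound the Hessian by
\[
\|H\|+2\lambda A+\kappa A^2+C_1\tau A^2r^2 .
\]
Here $\|H\|\le b/4$, the term $2\lambda A$ is $o(1)$, $\kappa A^2\le2592\gamma=O(\eps^{2/3})$, and $C_1\tau A^4\le b/4$. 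The total is at most $b$ for small $\eps$. All orders $2\le j\le p+1$ are therefore covered.

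\textbf{Gap.} We have $10N\tau\le10\cdot\frac{a_0}{1440G_0}\eps^{-4/3}\cdot18G_0a_0^{-1}\eps^{4/3}=1/8$. Also $\kappa/2=O(\eps^{4/3})\le1/8$ for small $\eps$, so $\overline\Delta\le1/4$.

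\textbf{Certificate.} Proposition~\ref{ana:prop:block} with \eqref{ana:eq:green-gradient} gives
\[
\|\nabla F\|\ge\gamma/(208A)=G_0\eps/208=8192\eps/208>32\eps
\]
whenever $u_N=0$.

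\textbf{Main obstacle.} The only delicate step is the derivative budget: all orders must hold simultaneously with the same constants. This rests on $\tau A^4$ being order one, which requires $\tau\asymp\gamma/A^2$. That in turn depends on the lower bound for $\beta$ from the Green's function estimate. The remaining steps are routine small-$\eps$ absorption.
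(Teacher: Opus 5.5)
Your proposal is correct and follows essentially the same route as the paper. Both arguments feed the calibration into Lemma~\ref{ana:lem:green} and Proposition~\ref{ana:prop:block}, and both use $\tau\le18\gamma/A^2$ with the cancellation $A=r$ to get $C_q\tau A^4\le18G_0C_*^{\rm prim}a_0^3\le b/4$ at every order. Both absorb $2\lambda A$ and $\kappa A^2$ as vanishing terms beside $\|H\|\le b/4$, obtain $10N\tau\le1/8$ exactly from the phase count, and use $\gamma/(208A)=G_0\eps/208>32\eps$ for the certificate. The only differences are in constant bookkeeping: the paper's slightly looser $2593\gamma$ in the bound on $L_Q$, for instance.
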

\begin{proof}
All integer and small-parameter requirements hold by reducing a fixed positive
threshold for $\eps$. Since $\tau\le18\gamma/A^2$, for every $q\in[p]$,
\[
 C_q\tau A^{q+1}r^{3-q}
 \le18C_q\gamma A^2
 =18C_qG_0a_0^3\le b/4.
\]
For $q=1$ this bounds the nonlinear Hessian; for higher $q$ it bounds the
entire derivative of order $q+1$. The quadratic bound is
\[
 L_Q\le4L+2593\gamma+18\sqrt2\gamma^{3/4}L^{1/4}.
\]
The first term is $b/4$ and the last two tend to zero, so $L_Q\le b/2$
for sufficiently small $\eps$. This proves \eqref{ana:eq:small-base-budgets}.
The phase-count choice gives
\[
 10N\tau\le\frac18,\qquad
 \frac\kappa2\le\frac{1296G_0}{a_0}\eps^{4/3}\le\frac18
\]
after reducing the threshold again. Finally the zero-tail certificate is at
least $(G_0/208)\eps>32\eps$. The integer asymptotics follow directly from
\eqref{ana:eq:calibration}.
\end{proof}

The horizon $NM$ is converted to a randomized lower bound by the geometric and Gaussian arguments in Appendices~\ref{app:geometry} and~\ref{app:gaussian}.

\section{Global admissibility and static tensor perturbations}\label{app:global}
We first prove deterministic confinement estimates for arbitrary bounded
third- and fourth-order perturbation tensors. We then verify the norm event
for the Gaussian tensors used in the hard distribution.

\subsection{Static polynomial masks and dimension-free confinement}
Let $T_3$ and $T_4$ be arbitrary symmetric multilinear forms satisfying
$\|T_k\|_{\op}\le\delta_k$, and define the static potential
\begin{equation}\label{ana:eq:mask}
 P(y)=\frac13T_3[y,y,y]+\frac14T_4[y,y,y,y].
\end{equation}
For a radius $R>0$, put
\[
 \nu=\delta_3R^2+\delta_4R^3.
\]
For $\|y\|\le R$, direct differentiation gives
\begin{align}
 \|\nabla P(y)\|&\le\nu,\qquad
 |P(y)|\le\frac{R\nu}{3},\qquad
 |\inner{y}{\nabla P(y)}|\le R\nu,\label{ana:eq:mask-basic}\\
 \|D^jP(y)\|_{\op}&\le
 \sum_{\substack{k\in\{3,4\}\\ k\ge j}}
 \frac{(k-1)!}{(k-j)!}\delta_k R^{k-j}\quad(j\ge1).
 \label{ana:eq:mask-deriv}
\end{align}
These estimates are deterministic. For the Gaussian construction they
apply on the norm event proved in Lemma~\ref{gau:lem:noise-norm}.

Define
\begin{equation}\label{ana:eq:rho}
 \rho_R(x)=\frac{x}{\sqrt{1+\|x\|^2/R^2}},\qquad
 G(x)=(F_{\rm ext}+P)(\rho_R(x))+\frac{\eta_0}{2}\|x\|^2.
\end{equation}

\begin{lemma}[Derivatives of the radial map and composition]\label{ana:lem:composition}
The map $\rho_R$ has image the open ball of radius $R$.
For each $j\ge1$ there is a dimension-free constant $d_j$ such that
\[
 \|D^j\rho_R(x)\|_{\op}\le d_jR^{1-j}\quad(x\in\R^d),\qquad d_1=1.
\]
If $t=\|x\|$ and $a_x=(1+t^2/R^2)^{-1/2}$, its Jacobian eigenvalues are
$a_x$ in tangential directions and $a_x^3$ in the radial direction.
In particular, for $t\le R$ its smallest singular value is at least
$1/(2\sqrt2)$.

Suppose $\|D^jF_{\rm ext}\|_\infty\le B_j$ for $2\le j\le p+1$,
and let $c_{\rm ext}=\nabla F_{\rm ext}(0)$.
There are constants $K_q\ge1$, depending only on $q$, such that
\begin{equation}\label{ana:eq:composition-budget}
 \|D^{q+1}G\|_\infty
 \le K_q\left[
 (\|c_{\rm ext}\|+\nu)R^{-q}
 +\sum_{j=2}^{q+1}B_j R^{j-q-1}\right]
 +\eta_0\mathbf1_{\{q=1\}}
 \quad(1\le q\le p).
\end{equation}
\end{lemma}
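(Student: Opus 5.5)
The plan is to treat the radial map first, then the composition. By scaling, $\rho_R(x)=R\,\rho_1(x/R)$, so $D^j\rho_R(x)=R^{1-j}(D^j\rho_1)(x/R)$. It therefore suffices to show that $\rho_1(x)=x\,\psi(\|x\|^2)$, with $\psi(s)=(1+s)^{-1/2}$, has bounded derivatives of every order uniformly in $x$ and in the dimension.

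\textbf{Derivatives of $\rho_1$.} I will expand $D^j\rho_1$ by Leibniz and Fa\`a di Bruno. Each term has the form $\psi^{(m)}(\|x\|^2)$ times a symmetrized product of factors $x$, $\langle x,\cdot\rangle$ and $\langle\cdot,\cdot\rangle$. In a typical term, $x$ appears $a$ times with $a\le 2m+1$, and $|\psi^{(m)}(s)|\le c_m(1+s)^{-1/2-m}$. Hence each term is bounded by $c(1+t^2)^{-1/2-m}t^{a}$, which is bounded in $t$ because $a\le 2m+1$. The number of terms depends only on $j$, and operator norms of such products are bounded by the product of the vector norms. This gives dimension-free constants $d_j$.

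\textbf{Jacobian and image.} I will compute the Jacobian exactly: $D\rho_R(x)=a_xI-a_x^3\,xx^\top/R^2$. Using $a_x^2t^2/R^2=1-a_x^2$, its radial eigenvalue is $a_x-a_x(1-a_x^2)=a_x^3$, and the tangential eigenvalues are $a_x$. Both lie in $(0,1]$, so $d_1=1$. For $t\le R$ we have $a_x\ge 2^{-1/2}$, so the smallest singular value is at least $2^{-3/2}$. The image is the open ball because $\|\rho_R(x)\|=t/\sqrt{1+t^2/R^2}$ is strictly increasing in $t$, with supremum $R$.

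\textbf{Composition bound.} Set $\Phi=F_{\rm ext}+P$; the quadratic $\eta_0\|x\|^2/2$ contributes only the term $\eta_0\mathbf 1_{\{q=1\}}$. Fa\`a di Bruno gives
\[
D^{q+1}(\Phi\circ\rho_R)=\sum_{j=1}^{q+1}\sum_{\pi}D^j\Phi(\rho_R)\bigl[D^{|B_1|}\rho_R,\ldots,D^{|B_j|}\rho_R\bigr],
\]
where $\pi$ ranges over partitions of $[q+1]$ into $j$ blocks. Each term has operator norm at most $\|D^j\Phi\|\prod_i d_{|B_i|}R^{1-|B_i|}=\|D^j\Phi\|\,R^{j-q-1}\prod_i d_{|B_i|}$.

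\textbf{Bounding $D^j\Phi$ on the ball.} For $j\ge2$, on the ball $\|y\|<R$:
\begin{itemize}
\item $\|D^jF_{\rm ext}\|\le B_j$ by hypothesis.
\item $\|D^jP\|\le\sum_k\frac{(k-1)!}{(k-j)!}\delta_kR^{k-j}$ by the mask estimates, and $\delta_kR^{k-j}R^{j-q-1}=\delta_kR^{k-1}R^{-q}\le\nu R^{-q}$ for $k=3,4$.
\end{itemize}
For $j=1$, I write $\nabla F_{\rm ext}(y)=c_{\rm ext}+\int_0^1D^2F_{\rm ext}(sy)y\,ds$, so $\|\nabla F_{\rm ext}(y)\|\le\|c_{\rm ext}\|+B_2R$, and $\|\nabla P\|\le\nu$. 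The $j=1$ term carries the factor $R^{-q}$, so the piece $B_2R\cdot R^{-q}=B_2R^{1-q}$ matches the $j=2$ term of the sum. Collecting everything, with $K_q$ absorbing the partition counts, the $d$'s and the combinatorial factors, yields the stated bound.

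\textbf{Main obstacle.} The delicate point is the dimension-free bound on $D^j\rho_1$. I expect the bookkeeping of powers of $t$ against powers of $(1+t^2)^{-1}$ to carry it; alternatively, an induction on $j$ shows that $D^j\rho_1$ is a sum of terms homogeneous of controlled degree, and the same power count applies.
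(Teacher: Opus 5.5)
Your proposal is correct and follows essentially the same route as the paper. Both use the scaling reduction $\rho_R(x)=R\rho_1(x/R)$, power-count each copy of $x$ against the negative powers of $1+\|x\|^2$, and apply the partition-indexed chain rule. In that chain rule, the $j=1$ term is handled via $\|\nabla F_{\rm ext}(y)\|\le\|c_{\rm ext}\|+B_2R$, with $B_2R^{1-q}$ absorbed into the $j=2$ term, and the mask derivatives give $\nu R^{-q}$. Your explicit Jacobian $a_xI-a_x^3xx^\top/R^2$ and your monotonicity argument for the image simply fill in what the paper leaves to direct differentiation.
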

\begin{proof}
The Jacobian assertions follow by differentiation. The scaling identity
$\rho_R(x)=R\rho_1(x/R)$ reduces higher derivative bounds to $R=1$.
Differentiating $x(1+\|x\|^2)^{-1/2}$ yields a finite sum, with coefficients
depending only on the differentiation order, of products of inner products,
copies of $x$, and negative powers of $1+\|x\|^2$.
For unit input vectors each such term is bounded by a constant depending only
on that order: every power of $\|x\|$ is dominated by the corresponding
denominator power. This proves the dimension-free $d_j$ bounds.

For $n=q+1$, the multilinear chain rule is indexed by partitions of $[n]$.
A term with $j$ blocks contains $D^jF_{\rm ext}$ and a product of derivatives
of $\rho_R$ bounded by a constant times $R^{j-n}$.
For $j\ge2$ this gives $B_jR^{j-n}$. For $j=1$, use
\[
 \|\nabla F_{\rm ext}(y)\|\le\|c_{\rm ext}\|+B_2R
 \quad(\|y\|\le R).
\]
Its contribution is bounded by
$\|c_{\rm ext}\|R^{1-n}+B_2R^{2-n}$, with the latter absorbed in the
$j=2$ term. The same chain rule and \eqref{ana:eq:mask-deriv} give
\[
 \|D^n(P\circ\rho_R)\|_\infty
 \le C_n\bigl(\delta_3R^{3-n}+\delta_4R^{4-n}\bigr)
 =C_n\nu R^{1-n}.
\]
There are finitely many partitions depending only on $n$, so their constants
can be absorbed into $K_q$. The final quadratic term contributes only when
$q=1$.
\end{proof}

\begin{theorem}[Confinement, smoothness, gap, and robust stationarity]\label{ana:thm:confinement}
Fix $p\ge3$ and $C_R\ge1$. Let $K_*\ge1$ dominate the constants $K_q$,
$q\in[p]$, of Lemma~\ref{ana:lem:composition}, and take
\[
 0<b\le\frac1{16pK_*}.
\]
Use Proposition~\ref{ana:prop:calibration}, followed by any number of spectral
decoy copies and any fixed orthogonal rotation. For all sufficiently small
$\eps>0$, let
\begin{equation}\label{ana:eq:confinement-parameters}
 R=\frac{C_R}{\eps},\qquad
 \eta_0=\frac{4\eps}{R},\qquad
 \nu=\delta_3R^2+\delta_4R^3\le\frac{\eps}{4C_R}.
\end{equation}
Then $G$ from \eqref{ana:eq:rho} is $C^\infty$ and satisfies
\begin{align}
 G(0)-\inf G&\le\frac13<1,\label{ana:eq:G-gap}\\
 \Lip(D^qG)&\le1\quad(1\le q\le p),\label{ana:eq:G-budgets}\\
 \|x\|\ge R&\Longrightarrow\|\nabla G(x)\|>3\eps.
 \label{ana:eq:outside}
\end{align}
Let $a_N$ be the rotated final active readout, $\|a_N\|=A$.
At every $x$ such that $\|x\|\le R$ and, with $y=\rho_R(x)$,
\begin{equation}\label{ana:eq:approx-tail}
 |a_N^\top y|\le\frac{16A\eps}{b},
\end{equation}
one also has $\|\nabla G(x)\|>\eps$.
Consequently every $\eps$-stationary point lies in $\|x\|<R$ and violates
the hidden-tail condition \eqref{ana:eq:approx-tail}.
\end{theorem}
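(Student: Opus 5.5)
The plan is to apply Lemma~\ref{ana:lem:composition} with the base bounds of Proposition~\ref{ana:prop:calibration}, which survive the decoys and the rotation by \eqref{ana:eq:decoy-smooth}, and then to prove the gap, outside, and inside statements separately. The main tools are the radial identity $D\rho_R(x)x=a_x^2\rho_R(x)$ and the Jacobian singular-value bound. Smoothness of $G$ is immediate, since $\rho_R$, $F_{\rm ext}$ and $P$ are all $C^\infty$.

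\textbf{Budgets.} Take $B_j=b$ for $2\le j\le p+1$. The linear term satisfies $\|c_{\rm ext}\|=\lambda\le 9\sqrt2\gamma^{3/4}L^{1/4}/A$ by \eqref{ana:eq:green-consequences}, which tends to zero. In \eqref{ana:eq:composition-budget} we have $R^{-q}\le(\eps/C_R)^q$, so the first bracket is $o(1)$. Since $R\ge1$, each term $B_jR^{j-q-1}$ is at most $b$, so the sum is at most $qb\le pb$. Hence
\[
\|D^{q+1}G\|_\infty\le K_*pb+o(1)+\eta_0\le\tfrac1{16}+o(1)+4\eps^2/C_R\le1
\]
for small $\eps$. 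This gives \eqref{ana:eq:G-budgets}, because a bound on $D^{q+1}G$ bounds the directional Lipschitz constant of $D^qG$.

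\textbf{Gap.} We have $G(0)=F_{\rm ext}(0)=F(0)$, since $P(0)=0$ and $\rho_R(0)=0$. The image of $\rho_R$ lies in the ball of radius $R$, where $|P|\le R\nu/3\le1/12$ by \eqref{ana:eq:mask-basic} and \eqref{ana:eq:confinement-parameters}. The quadratic term $\tfrac{\eta_0}{2}\|x\|^2$ is nonnegative. Therefore
\[
G(0)-\inf G\le F(0)-\inf F_{\rm ext}+\tfrac1{12}\le\tfrac14+\tfrac1{12}=\tfrac13,
\]
using \eqref{ana:eq:small-base-gap}, whose bound is unchanged for $F_{\rm ext}$.

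\textbf{Outside the ball.} Write $t=\|x\|$ and $y=\rho_R(x)=a_xx$. The Jacobian acts on $x$ by the radial eigenvalue $a_x^3$, so $D\rho_R(x)x=a_x^3x=a_x^2y$ and
\[
\langle x,\nabla G(x)\rangle=a_x^2\langle y,\nabla(F_{\rm ext}+P)(y)\rangle+\eta_0t^2 .
\]
By \eqref{ana:eq:radial-F} and \eqref{ana:eq:mask-basic}, the inner product is at least $-\tfrac{81}{64}\cdot\tfrac14-R\nu\ge-0.6$. Since $a_x^2\le1$, this gives $\|\nabla G(x)\|\ge\eta_0t-0.6/t$. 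For $t\ge R$ this is at least $4\eps-0.6\eps/C_R>3\eps$, which proves \eqref{ana:eq:outside}.

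\textbf{Inside the ball.} This is the step needing care, because the block certificate \eqref{ana:eq:large-base-cert} is exact only at $u_N=0$. Let $u_N=a_N^\top y$ and set $y'=y-(u_N/A^2)a_N$. The other readouts are orthogonal to $a_N$, so $y'$ has zero terminal readout while every other readout is unchanged. The zero-tail certificate holds unchanged for $F_{\rm ext}$ (the decoy gradient components are orthogonal to the active one), so $\|\nabla F_{\rm ext}(y')\|\ge32\eps$. The Hessian bound $b$ gives
\[
\|\nabla F_{\rm ext}(y)-\nabla F_{\rm ext}(y')\|\le b\,|u_N|/A\le16\eps
\]
under \eqref{ana:eq:approx-tail}. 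Hence $\|\nabla(F_{\rm ext}+P)(y)\|\ge16\eps-\nu\ge15\eps$. For $\|x\|\le R$ the smallest singular value of $D\rho_R(x)^\top$ is at least $1/(2\sqrt2)$, and $\|\eta_0x\|\le\eta_0R=4\eps$. Therefore
\[
\|\nabla G(x)\|\ge\frac{15\eps}{2\sqrt2}-4\eps>\eps .
\]
The consequence follows by combining this with \eqref{ana:eq:outside}: every $\eps$-stationary point has $\|x\|<R$ and violates \eqref{ana:eq:approx-tail}.
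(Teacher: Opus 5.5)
Your proof is correct and follows the paper's argument step for step. It uses the same gap bound via $|P|\le R\nu/3\le 1/12$, the same composition-lemma budget with $K_*pb\le 1/16$, and the same radial identity $\langle x,\nabla G(x)\rangle=a_x^2\langle y,\nabla(F_{\rm ext}+P)(y)\rangle+\eta_0\|x\|^2$ outside the ball; inside the ball it uses the same projection $y'=y-(u_N/A^2)a_N$ with the $b$-Lipschitz gradient and the singular-value bound $1/(2\sqrt2)$. The only deviations are cosmetic and harmless: you bound $\|c_{\rm ext}\|=\lambda$ directly from Lemma~\ref{ana:lem:green} instead of via the descent inequality $\|c_{\rm ext}\|^2\le 2b(F_{\rm ext}(0)-\inf F_{\rm ext})$, and you use $a_x^2\le 1$ rather than $a_x^2\le 1/2$ for $\|x\|\ge R$, which still leaves margin ($3.4\eps>3\eps$).
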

\begin{proof}
Because the added quadratic is nonnegative and $|P(y)|\le R\nu/3\le1/12$
on the image of $\rho_R$,
\[
 G(0)-\inf G\le\overline\Delta+\frac1{12}
 \le\frac14+\frac1{12}=\frac13.
\]
For smoothness, $B_j\le b$ in \eqref{ana:eq:composition-budget}.
The standard descent inequality for a bounded-below $b$-smooth function gives
\[
 \|c_{\rm ext}\|^2\le2b(F_{\rm ext}(0)-\inf F_{\rm ext})\le b/2.
\]
In particular $\|c_{\rm ext}\|\le1$. Since $R\ge1$,
\[
 \sum_{j=2}^{q+1} B_jR^{j-q-1}\le p b.
\]
Taking additionally $\eps\le\min\{1/8,1/(16K_*)\}$, the right side of
\eqref{ana:eq:composition-budget} is bounded by
\[
 K_*p b+K_*\eps+\frac{K_*\eps^2}{4}+4\eps^2<1.
\]
Thus all full operator derivative bounds, and hence all asserted Lipschitz
bounds, hold.

Put $V=F_{\rm ext}+P$.
By \eqref{ana:eq:radial-F} and \eqref{ana:eq:mask-basic}, for $\|y\|\le R$,
\[
 \inner{y}{\nabla V(y)}\ge-C_0,\qquad
 C_0=\frac{81}{64}\overline\Delta+R\nu
 \le\frac{81}{256}+\frac14=\frac{145}{256}.
\]
For $t=\|x\|\ge R$ and $a_x=(1+t^2/R^2)^{-1/2}$,
\[
 \inner{\frac{x}{t}}{\nabla G(x)}
 =\frac{a_x^2}{t}\inner{y}{\nabla V(y)}+\eta_0t
 \ge-\frac{C_0}{2R}+4\eps
 \ge\left(4-\frac{145}{512C_R}\right)\eps>3\eps.
\]
This proves \eqref{ana:eq:outside} without any assumption on hidden readouts.

For the interior statement, set
\[
 y_0=y-\frac{a_N^\top y}{A^2}a_N.
\]
This point has final readout zero and
$\|y-y_0\|=|a_N^\top y|/A$.
The $b$-Lipschitz gradient and the $32\eps$ zero-tail certificate give
\[
 \|\nabla F_{\rm ext}(y)\|
 \ge32\eps-b\frac{|a_N^\top y|}{A}\ge16\eps.
\]
The noise gradient has norm at most $\nu\le\eps/4$.
Since $\|x\|\le R$, the minimum singular value of $D\rho_R(x)$ is
at least $1/(2\sqrt2)$ and $\eta_0\|x\|\le4\eps$. Therefore
\[
 \|\nabla G(x)\|
 \ge\frac{16\eps-\nu}{2\sqrt2}-4\eps
 \ge\left(\frac{63}{8\sqrt2}-4\right)\eps>\eps.
\]
\end{proof}

\paragraph{Exact value-gradient simulation.}
At every finite query $x$, set $y=\rho_R(x)$; always $\|y\|<R$.
An exact value-gradient oracle for $V=F_{\rm ext}+P$ at $y$ determines
\[
 G(x)=V(y)+\eta_0\|x\|^2/2,\qquad
 \nabla G(x)=D\rho_R(x)^\top\nabla V(y)+\eta_0x.
\]
Conversely $D\rho_R(x)$ is invertible at every finite $x$, so the same
formulas recover the value and gradient of $V$ at $y$ from an exact oracle for
$G$ at $x$. These transformations are public, deterministic, and preserve
adaptive exact-oracle information. Combined with
\eqref{ana:eq:polynomial-plus-correction}, they require an exact polynomial core
only for undisclosed readouts at the transformed query $y$.

\subsection{Gaussian norm event and noise budgets}
For a tensor $Z\in\Sym^k(\R^d)$ use the full multilinear operator norm
\[
 \|Z\|_{\op}
 =\sup_{\|v_1\|=\cdots=\|v_k\|=1}|Z[v_1,\ldots,v_k]|.
\]
Define
\begin{equation}\label{gau:eq:gaussian-constant}
 a_k=4\sqrt{k\log(1+4k)+2},\qquad
 \sigma_k=\frac{\delta_k}{a_k\sqrt d}.
\end{equation}

\begin{lemma}[A fixed high-probability norm event]\label{gau:lem:noise-norm}
The event
\[
 \mathcal G_Z=\bigcap_{k=3}^4
 \{\sigma_k\|Z_k\|_{\op}\leq\delta_k\}
\]
has probability at least $1-4e^{-4d}$.  On this event the static polynomial
\[
 P(z)=\sum_{k=3}^4\frac{\sigma_k}{k}Z_k[z^{\otimes k}]
\]
satisfies, for $\|z\|\leq R$,
\begin{align}
 |P(z)|&\leq\frac{\delta_3R^3}{3}
               +\frac{\delta_4R^4}{4},\label{gau:eq:noise-value}\\
 \|\nabla P(z)\|&\leq\delta_3R^2+\delta_4R^3.\label{gau:eq:noise-grad}
\end{align}
For $j\geq1$,
\begin{equation}\label{gau:eq:noise-derivatives}
 \|D^jP(z)\|_{\op}
 \leq\sum_{k\in\{3,4\}:k\geq j}
   \frac{(k-1)!}{(k-j)!}\delta_k R^{k-j}.
\end{equation}
\end{lemma}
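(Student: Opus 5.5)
The plan splits into a probabilistic part, the norm event, and a deterministic part, the bounds on that event. For the norm event I will use a sphere-net bound on the symmetric Gaussian tensor $Z_k$, with one adjustment: because $Z_k$ is symmetric, it suffices to control the diagonal form $\sup_{\|v\|=1}|Z_k[v^{\otimes k}]|$. The standard polarization-free bound for symmetric tensors (Banach's theorem: the operator norm equals the norm of the associated homogeneous polynomial) gives $\|Z_k\|_{\op}=\sup_{\|v\|=1}|Z_k[v,\ldots,v]|$. For fixed unit $v$, the variable $Z_k[v^{\otimes k}]=\langle Z_k,v^{\otimes k}\rangle_F$ is centered Gaussian with variance $\|v^{\otimes k}\|_F^2=1$, since $v^{\otimes k}$ is already symmetric and $Z_k$ is isotropic for the Frobenius product.

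The net step goes as follows. Take a $1/(4k)$-net $\mathcal N$ of the sphere with $|\mathcal N|\le(1+8k)^d$; the precise radius is chosen to match $\log(1+4k)$ in $a_k$. For any unit $v$ with nearest net point $w$, $\|v-w\|\le1/(4k)$, expand $Z[v^{\otimes k}]-Z[w^{\otimes k}]$ telescopically into $k$ terms, each bounded by $\|Z\|_{\op}\|v-w\|$. This gives $\|Z\|_{\op}\le \max_{\mathcal N}|Z[w^{\otimes k}]|+\tfrac14\|Z\|_{\op}$, hence $\|Z\|_{\op}\le\tfrac43\max_{\mathcal N}|Z[w^{\otimes k}]|$. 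A union bound with the Gaussian tail $2e^{-s^2/2}$ at $s=\tfrac34 a_k\sqrt d$ gives failure probability at most $2(1+8k)^de^{-9a_k^2d/32}$. Since $a_k^2=16(k\log(1+4k)+2)$, the exponent dominates $d\log(1+8k)+4d$ (I will fix the net-radius constant so that this inequality closes; $a_k$ has generous slack). So each $k$ fails with probability at most $2e^{-4d}$, and summing over $k=3,4$ gives $1-4e^{-4d}$. On the event, $\sigma_k\|Z_k\|_{\op}\le\sigma_ka_k\sqrt d=\delta_k$.

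The deterministic bounds then follow exactly as in \eqref{ana:eq:mask-basic}–\eqref{ana:eq:mask-deriv} with $T_k=\sigma_kZ_k$. Differentiating $\frac{1}{k}T[z^{\otimes k}]$ $j$ times gives $\frac{(k-1)!}{(k-j)!}T[z^{\otimes(k-j)},\cdot^{j}]$, whose operator norm is at most $\frac{(k-1)!}{(k-j)!}\delta_kR^{k-j}$ for $\|z\|\le R$. The case $j=1$ gives \eqref{gau:eq:noise-grad}, and the value bound is immediate. The main obstacle is the net constant bookkeeping; the symmetric-norm reduction is what avoids a $k$-fold product net, which would cost $(\cdot)^{kd}$ and still fit but with worse constants. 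If that reduction is not permitted, I will instead use a product net over $(v_1,\ldots,v_k)$ with the same telescoping, at cardinality $(1+8k)^{kd}$, which the $k\log(1+4k)$ term in $a_k$ appears designed to absorb.
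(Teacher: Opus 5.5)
Your proof is correct. It differs from the paper's only in how the operator norm is discretized.

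The paper uses a product net. It takes a $(2k)^{-1}$-net with at most $(1+4k)^d$ points in each of the $k$ slots and telescopes across the $k$ independent arguments to get $\|Z_k\|_{\op}\le 2\max|Z_k[v_1,\ldots,v_k]|$ over the net. It then notes that $Z_k[v_1,\ldots,v_k]=\langle Z_k,\Sym(v_1\otimes\cdots\otimes v_k)\rangle_F$ has variance at most one, because $\Sym$ is an orthogonal projection. The resulting cardinality $(1+4k)^{kd}$ is exactly what the $k\log(1+4k)$ term in $a_k$ is built to absorb, since $a_k^2/8=2k\log(1+4k)+4$. So your fallback is precisely the paper's proof.

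Your main route instead uses Banach's theorem: a symmetric $k$-linear form on a real Hilbert space attains its norm on the diagonal. This gives a single net of size $(1+8k)^d$, variance exactly one, and the factor $4/3$. The closing inequality $\log(1+8k)+4\le\tfrac{9}{32}a_k^2=\tfrac92\bigl(k\log(1+4k)+2\bigr)$ holds with a wide margin for $k=3,4$, so the radius needs no further tuning and your hedge can be dropped. Your remark that the radius $1/(4k)$ is chosen to match $\log(1+4k)$ is inaccurate, since it produces $\log(1+8k)$, but this is harmless.

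The trade-off: your version uses only a small part of the slack in $a_k$ (a constant of order $\sqrt{\log k}$ would suffice). In exchange it imports a nontrivial classical theorem that relies on the Euclidean structure. The paper's product-net argument is fully elementary, at the cost of the larger constant already built into $a_k$. The deterministic value, gradient, and higher-derivative bounds are the same direct differentiation in both.
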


\begin{proof}
Take a $(2k)^{-1}$-net of the unit sphere with at most $(1+4k)^d$ points.
The product of $k$ such nets has at most $(1+4k)^{kd}$ elements.
Multilinearity and a telescoping approximation in the $k$ arguments give
$\|Z\|_{\op}\leq2\max_{\mathrm{net}}|Z[v_1,\ldots,v_k]|$.
For fixed unit arguments, this evaluation is Gaussian with variance
$\|\Sym(v_1\otimes\cdots\otimes v_k)\|_F^2\leq1$.
A union bound therefore gives
\[
 \Pr\{\|Z_k\|_{\op}>a_k\sqrt d\}
 \leq2\exp\{kd\log(1+4k)-a_k^2d/8\}
 \leq2e^{-4d}.
\]
The two-degree union bound proves the probability estimate.  Direct
differentiation of the homogeneous polynomials proves the remaining claims.
\end{proof}

One convenient choice, for a desired perturbation-gradient budget
$\mu\varepsilon$, is
\begin{equation}\label{gau:eq:delta-choice}
 \delta_3=\frac{\mu\varepsilon}{2R^2},\qquad
 \delta_4=\frac{\mu\varepsilon}{2R^3}.
\end{equation}
Then \eqref{gau:eq:noise-grad} is at most $\mu\varepsilon$ and
\eqref{gau:eq:noise-value} is at most $7\mu\varepsilon R/24$.
For
\[
 \rho_R(x)=\frac{x}{\sqrt{1+\|x\|^2/R^2}},
\]
the bounds $\|\rho_R(x)\|\leq R$ and
$\|D^j\rho_R(x)\|_{\op}\leq C_jR^{1-j}$ imply, by the finite-order
chain rule,
\begin{equation}\label{gau:eq:noise-composition}
 \sup_x\|D^j(P\circ\rho_R)(x)\|_{\op}
 \leq C_j\bigl(\delta_3R^{3-j}+\delta_4R^{4-j}\bigr)
 \leq C_j\mu\varepsilon R^{1-j},\qquad j\geq1.
\end{equation}
Thus every prescribed finite derivative budget can be met by a fixed choice
of constants.  In particular, $R\asymp\varepsilon^{-1}$ gives
$\delta_3\asymp\varepsilon^3$ and
$\delta_4\asymp\varepsilon^4$.

\section{Scheduled Haar exposure and whole-span control}\label{app:geometry}
The statements in this appendix permit arbitrary measurable query policies.
The only restriction concerns the source of their side information: after
fixing independent physical advice, reference responses may depend on the
rotation only through scheduled columns and counted query pairs.

\subsection{Exact model and deterministic canonical data}
Fix a known real symmetric matrix \(Q_0\in\R^{D_0\times D_0}\),
an integer \(K\ge0\), and a horizon \(L\ge1\).
Set
\[
 d=(K+1)D_0,\qquad
 \overline Q=\diag(Q_0,\ldots,Q_0)\in\R^{d\times d}.
\]
The canonical active subspace is the first copy
\[
 A_0=\R^{D_0}\oplus0\oplus\cdots\oplus0\subseteq\R^d.
\]
It is \(\overline Q\)-invariant.
Let \(J\) be the number of distinct eigenvalues of \(Q_0\).
For each such eigenvalue \(\lambda\), its multiplicity in \(Q_0\)
is denoted \(m_\lambda\ge1\), and its multiplicity in \(\overline Q\)
is \((K+1)m_\lambda\).
The corresponding fixed canonical spectral projector is \(P_\lambda\).

Fix deterministic nested subspaces
\[
 I_1\subseteq I_2\subseteq\cdots\subseteq I_{L+1}\subseteq A_0
\]
with known bases and the one-step propagation property
\begin{equation}\label{geo:eq:propagation}
 \overline Q I_t\subseteq I_{t+1},\qquad 1\le t\le L.
\end{equation}
The entire schedule is chosen before drawing the rotation or
running the algorithm.
Any initially supplied physical vector \(Uc_0\) must have its
known canonical vector \(c_0\) in \(I_1\); it is then already
part of the first scheduled disclosure.

Draw \(U\) Haar on \(O(d)\), and define the physical matrix
\[
 Q=U\overline Q U^\top.
\]
Let \(\Xi\) be arbitrary auxiliary physical randomness independent of \(U\).
It is distinct from the static mask tensors \(Z_3,Z_4\).
Its distribution may depend on all public data and on \(d,L\).
It can include all of a policy's private random bits and a finite
array of independent physical Gaussian seeds for reference replies.

Fix a number of response steps \(L_{\rm q}\in\{L-1,L\}\).
For each \(1\le t\le L_{\rm q}\), at the beginning of step \(t\)
disclose \(U|_{I_t}\): the images under \(U\) of a fixed known basis
of \(I_t\). A policy chooses \(x_t\in\R^d\) measurably from \(\Xi\),
all current and past disclosures, and all previous responses. We allow the
\emph{strengthened} response
\begin{equation}\label{geo:eq:strong-response}
 (Qx_t,\ U^\top x_t).
\end{equation}
The second vector is optional for an actual application; giving it
to the policy only makes the geometric theorem stronger.

If \(L_{\rm q}=L-1\), the policy next freezes a terminal candidate
\(x_L\) as a measurable function of its existing history and receives no
response at that point. Only after \(x_L\) has been frozen may the analyst,
for purposes of the geometric proof, disclose \(U|_{I_L}\). The policy has
already terminated and is formally extended to ignore that disclosure.
Thus a terminal candidate contributes one submitted vector but no
matrix-vector product, latent-coordinate response, or oracle observation.

Auxiliary physical responses at the response steps are allowed if they are measurable
functions of \(\Xi\), the queried vectors so far, the responses
\eqref{geo:eq:strong-response} so far, the disclosed restrictions
\(U|_{I_t}\), and earlier auxiliary responses. They may not depend
on undisclosed columns of \(U\) in any other way.
Thus, after fixing \(\Xi\), such responses are deterministic
postprocessing of exactly the matrix-vector and scheduled-column
information just specified.

The horizon \(L\) is a deterministic cap on submitted vectors. A policy
stopping early may be padded with zero response queries while keeping its
terminal candidate frozen. The case \(L_{\rm q}=L-1\) is exactly the model
of an arbitrary unqueried output after \(L-1\) oracle calls.

\subsection{Adaptive Haar completion}
\begin{lemma}[Adaptive Haar completion]\label{geo:lem:completion}
Let \(\mathscr H\) be a standard-Borel history. Suppose that, conditionally
on \(\mathscr H\), the history fixes two subspaces \(E,F\subseteq\R^d\) of
the same dimension and an isometry \(U_E:E\to F\), that \(U|_E=U_E\), and
that the remaining restriction
\[
 U:E^\perp\longrightarrow F^\perp
\]
is Haar on the space of isometries between these orthogonal complements.
Then either of the following one-pair exposures preserves the same property
on the enlarged orthogonal complements.
\begin{enumerate}[label=(\roman*)]
\item If a canonical vector \(v\) is \(\mathscr H\)-measurable and its image
      \(w=Uv\) is revealed, set
      \[
       e=(I-P_E)v,\qquad f=(I-P_F)w=Ue,
       \qquad E^+=E+\Span\{e\},\quad F^+=F+\Span\{f\}.
      \]
\item If a physical vector \(x\) is \(\mathscr H\)-measurable and its
      preimage \(\xi=U^\top x\) is revealed, set
      \[
       f=(I-P_F)x,\qquad e=(I-P_E)\xi=U^\top f,
       \qquad E^+=E+\Span\{e\},\quad F^+=F+\Span\{f\}.
      \]
\end{enumerate}
Conditionally on the enlarged history, \(U:E^{+\perp}\to F^{+\perp}\)
is Haar. This includes the zero-residual cases \(e=f=0\), remains true after
adjoining arbitrary measurable deterministic postprocessing of the exposed
pairs, and may be iterated a finite number of times.
\end{lemma}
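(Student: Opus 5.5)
The plan is to reduce both exposures to a single statement about a Haar isometry between fixed complements, and then use invariance of Haar measure under the stabilizer of the newly revealed vector. Work conditionally on \(\mathscr H\) and write \(V:E^\perp\to F^\perp\) for the restriction of \(U\); by hypothesis \(V\) is Haar on the isometries \(E^\perp\to F^\perp\). Here \(\dim E^\perp=\dim F^\perp=:m\).

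For case (i), first record that \(Uv=U_EP_Ev+Ve\), because \(P_Ev\in E\) and \(e\in E^\perp\). Hence revealing \(w\) is, up to the \(\mathscr H\)-measurable shift \(U_EP_Ev\), the same as revealing \(f=Ve\in F^\perp\). If \(e=0\), then \(f=0\), nothing new is learned, and the enlarged history is a deterministic function of \(\mathscr H\), so the claim is trivial. Otherwise, identify \(E^\perp\) and \(F^\perp\) with \(\R^m\) through fixed \(\mathscr H\)-measurable orthonormal bases. Then \(V\) becomes Haar on \(O(m)\), and \(f=Ve\) is uniform on the sphere of radius \(\|e\|\) in \(F^\perp\). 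The key step is the standard disintegration: conditionally on \(Ve=f\), the restriction of \(V\) to \(e^\perp\cap E^\perp\to f^\perp\cap F^\perp\) is Haar. This follows from left/right invariance. For any isometry \(S\) of \(e^\perp\cap E^\perp\) extended by the identity on \(e\), the law of \(V\) is invariant under \(V\mapsto VS\), and this map fixes \(Ve\). The regular conditional law given \(Ve=f\) is therefore invariant under the stabilizer of \(e\), and the only such probability measure on isometries sending \(e\mapsto f\) is the Haar measure on the complement. To make this rigorous I will write \(V=R_fV_0S\) with a measurable section \(R_f\), or equivalently verify the joint law against bounded test functions with Fubini. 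Since \(E^{+\perp}=e^\perp\cap E^\perp\) and \(F^{+\perp}=f^\perp\cap F^\perp\), the property holds on the enlarged complements. Adding \(\{e\}\) to \(E\) and \(\{f\}\) to \(F\) also extends \(U_E\) consistently, with \(U|_{E^+}\) determined by the enlarged history.

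Case (ii) is symmetric. Apply (i) to \(U^\top\), whose restriction \(F^\perp\to E^\perp\) is \(V^{-1}\) and is again Haar. Then \(U^\top x=U_E^{-1}P_Fx+V^{-1}f\), and the same argument applies with the roles of \(E\) and \(F\) exchanged.

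For the remaining clauses, deterministic measurable postprocessing of the exposed pairs, together with quantities already in \(\mathscr H\), adds no information about \(V\) beyond the pair itself, so conditioning on it leaves the conditional law unchanged. The standard-Borel assumption guarantees the regular conditional laws exist. Iteration follows by induction, since each step reproduces the hypothesis with \((E^+,F^+,U|_{E^+})\).

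The main obstacle I expect is measure-theoretic rather than algebraic. Because \(v\) or \(x\) depends on the history, the stabilizer argument must be carried out with history-measurable bases and a measurable choice of the rotation \(R_f\), and the conditional law must be identified jointly with \(\mathscr H\). I would handle this by proving the identity \(\E[\Phi(V)\,g(Ve)\mid\mathscr H]=\E[\Phi(R_{Ve}\widetilde V)\,g(Ve)\mid\mathscr H]\), where \(\widetilde V\) is an independent Haar isometry of the complement.
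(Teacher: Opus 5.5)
Your proposal is correct and is essentially the paper's argument: both fix history-measurable orthonormal frames so that the unused restriction is a Haar matrix $V$, observe that the exposure reveals only $Ve$, identify the conditional law given $Ve=f$ as uniform on the coset of isometries sending $e$ to $f$ (via stabilizer invariance and a measurable choice of rotation), and conclude that $U$ is Haar from $E^{+\perp}$ to $F^{+\perp}$. The remaining pieces are also handled as in the paper: case (ii) by transposition, the trivial zero-residual case, the postprocessing clause, and induction for iteration.
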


\begin{proof}
Fix a value of \(\mathscr H\) for which the asserted conditional law holds.
Choose orthonormal frames for \(E^\perp\) and \(F^\perp\); under these
identifications the unused restriction of \(U\) is a Haar matrix
\(V\in O(n)\), where \(n=d-\dim E\). Measurable frames can be chosen by
Gram--Schmidt in a deterministic coordinate order, so the argument defines
a regular conditional version as the history varies.

For (i), if \(e\ne0\), its normalized coordinate vector is fixed before the
exposure and \(Ve/\|e\|\) is uniform on the unit sphere. Conditional on its
revealed value \(f/\|f\|\), the matrix \(V\) is uniform on the coset of
orthogonal matrices sending \(e/\|e\|\) to \(f/\|f\|\). Choose measurable
orthogonal maps carrying the first coordinate vector to these two unit
vectors. After left and right multiplication by those maps, the unfixed
lower-right \((n-1)\times(n-1)\) block is Haar on \(O(n-1)\), with
\(O(0)\) interpreted as the singleton group. Equivalently, the
restriction between \(E^{+\perp}\) and \(F^{+\perp}\) is Haar. This is also
the unique conditional law invariant under left and right multiplication by
the stabilizers of the exposed pair. Case (ii) is the same argument applied
to \(U^\top\). If the residual is zero, no complement is reduced and the
conditional law is unchanged. A measurable function of already exposed data
does not enlarge the generated sigma-field beyond that data, and finite
iteration proves the last assertion.
\end{proof}

\subsection{Deferred Haar representation}
\begin{lemma}[Canonical Gaussian starts]\label{geo:lem:haar}
Assume \(d>D_0+2L\). On an enlargement of the probability space,
there are canonical vectors \(g_1,\ldots,g_L\in\R^d\) whose joint
law, conditional on \(\Xi\), is that of independent standard Gaussian
vectors, with the following property.
Set
\[
 {\cal R}
 =\cyc_{\overline Q}\bigl(\Span\{g_1,\ldots,g_L\}\bigr)
 =\bigoplus_{\lambda}
       \Span\{P_\lambda g_1,\ldots,P_\lambda g_L\}.
\]
Then simultaneously for \(1\le t\le L\),
\begin{equation}\label{geo:eq:query-containment}
 U^\top x_t\in I_t+{\cal R}.
\end{equation}
Moreover, at every response step \(1\le t\le L_{\rm q}\),
\[
 U^\top Qx_t\in I_{t+1}+{\cal R}.
\]
The vectors \(g_t\) need not be independent of \(U\).
That independence is neither asserted nor used.
\end{lemma}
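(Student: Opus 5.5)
We prove Lemma~\ref{geo:lem:haar} by a deferred-exposure induction on the response steps, using Lemma~\ref{geo:lem:completion} at every exposure. We maintain a history \(\mathscr H_t\) containing \(\Xi\), all scheduled disclosures \(U|_{I_s}\) for \(s\le t\), and the queries and responses so far. We also maintain canonical/physical subspaces \(E_t,F_t\) with \(U|_{E_t}\) fixed and the complementary restriction Haar, together with the invariant
\[
E_t\subseteq I_t+\mathcal R_t,\qquad \mathcal R_t=\cyc_{\overline Q}\Span\{g_1,\ldots,g_{t-1}\}.
\]
Here the \(g_s\) are constructed online.

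\textbf{Step \(t\).} First expose the scheduled columns \(U|_{I_t}\) by repeated use of part~(i) of Lemma~\ref{geo:lem:completion}. This adds only vectors of \(I_t\) to \(E\). The policy then chooses \(x_t\), which is \(\mathscr H\)-measurable, and we expose \(\xi_t=U^\top x_t\) via part~(ii). The residual is \(e_t=(I-P_E)\xi_t\).

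The key point is that, conditionally on \(\mathscr H\), the residual \(e_t\) equals \(\|f_t\|\) times a uniform unit vector in \(E^\perp\). We represent this vector as \(P_{E^\perp}g_t/\|P_{E^\perp}g_t\|\), where \(g_t\) is a fresh standard Gaussian on the enlarged space. Its radial part and its component in \(E\) are resampled independently, so that \(g_t\) is exactly standard Gaussian given \(\Xi\) and the past \(g\)'s; this is why \(g_t\) may depend on \(U\). We then get
\[
\xi_t\in E+\Span\{g_t\}\subseteq I_t+\mathcal R_{t+1}.
\]

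The response \(Qx_t\) equals \(U\overline Q\xi_t\). We also expose the canonical vector \(\overline Q\xi_t\) via part~(i). It lies in
\[
\overline Q(I_t+\mathcal R_{t+1})\subseteq I_{t+1}+\mathcal R_{t+1},
\]
by the propagation property \eqref{geo:eq:propagation} and the \(\overline Q\)-invariance of the cyclic span \(\mathcal R\). This gives the second claim and preserves the invariant.

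\textbf{Terminal candidate and auxiliary replies.} The terminal candidate \(x_L\) with \(L_{\rm q}=L-1\) is handled by the same part~(ii) exposure with a last Gaussian \(g_L\). Auxiliary replies are deterministic postprocessing of already exposed data given \(\Xi\), so they do not alter the conditional Haar law, as the final clause of Lemma~\ref{geo:lem:completion} states.

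\textbf{Dimension condition and spectral identity.} The hypothesis \(d>D_0+2L\) guarantees \(E^\perp\neq0\) throughout. In fact \(\dim E\le \dim I_{L+1}+2L\le D_0+2L\), since the \(I_t\) lie in \(A_0\). This makes the Gaussian representation well-defined. If a residual vanishes, we still draw \(g_t\) independently, so the joint law of the \(g\)'s is unaffected. The identity \(\cyc_{\overline Q}\Span\{g\}=\bigoplus_\lambda\Span\{P_\lambda g_s\}\) is standard, since polynomials in \(\overline Q\) generate exactly the spectral projectors.

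\textbf{Main obstacle.} We expect the delicate step to be verifying that the online-constructed \(g_1,\ldots,g_L\) are jointly iid standard Gaussian conditional on \(\Xi\), despite adaptive dependence on \(U\). We plan to argue this step by step. Conditional on \(\mathscr H\) just before exposure, the direction of \(e_t\) is uniform on the sphere of \(E^\perp\), a fixed subspace. We complete it with an independent chi radius and an independent Gaussian \(E\)-component. This yields a conditional law of \(g_t\) equal to \(N(0,I_d)\) regardless of \(\mathscr H\), hence independent of the past \(g\)'s and \(\Xi\).
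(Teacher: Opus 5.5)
Your proposal is correct and follows essentially the same deferred-exposure argument as the paper. The paper's exposed subspace \(\mathcal L_t\) plays the role of your \(E\), its construction \(g_t=a_t+R_te_t\) (with an independent \(N(0,P_{\mathcal L_t})\) component and a \(\chi_{d-\dim\mathcal L_t}\) radius) is exactly your completion of the uniform residual direction, and it closes with the same induction \(\mathcal L_t\subseteq I_t+\mathcal R\) via the propagation property and the \(\overline Q\)-invariance of \(\mathcal R\). One point you should state explicitly, as the paper does, is that the auxiliary radius and \(E\)-component are added to the conditioning history, and because they are independent of the unused rotation they leave its conditional Haar law intact; this is what justifies your claim that \(g_t\) is \(N(0,I_d)\) ``regardless of \(\mathscr H\)'' and hence independent of the earlier \(g\)'s.
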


\begin{proof}
Fix the independent physical advice \(\Xi=z\), using a fixed regular
conditional version supplied by the standard-Borel convention in the oracle
model. We describe the usual deferred-rotation construction explicitly.
All conditional isometries can be expressed using measurable orthonormal
frames obtained by Gram--Schmidt in a deterministic coordinate order,
omitting zero residuals. This fixes measurable versions of the constructions
also when exposure dimensions change.

Just before processing the submitted vector \(x_t\) in the deferred-exposure
construction, let \({\cal L}_t\) be the span of
the canonical vectors whose images under \(U\) have been exposed:
the disclosed subspace \(I_t\), previous
\(\xi_i=U^\top x_i\), and previous \(\overline Q\xi_i\),
for \(i<t\). Their images are known, respectively, from the
scheduled disclosures, the chosen physical queries \(x_i\), and
the matrix responses \(Qx_i\).
The conditioning history will also include the auxiliary Gaussian
variables constructed below, but \({\cal L}_t\) itself consists
only of the subspace on which the image under \(U\) has been
exposed. A Gaussian start corresponding to a zero perpendicular
query component is independent of \(U\) and does not disclose its
own image. In the nonzero case its image is already determined
by the exposed query pair, so it does not enlarge this subspace.
In particular
\[
 \dim{\cal L}_t\le D_0+2(t-1)<d.
\]

Conditional on this history, the remaining restriction
\[
 U:{\cal L}_t^\perp\longrightarrow(U{\cal L}_t)^\perp
\]
is Haar. Indeed, start from the unconditional Haar law, expose each scheduled
canonical basis vector by Lemma~\ref{geo:lem:completion}(i), expose the
preimage of each adaptively chosen physical query by part (ii), and, at a
response step, expose the image of \(\overline Q\xi_i\) by part (i).
All auxiliary responses are deterministic postprocessing of these pairs and
the fixed \(z\), so the last clause of the lemma shows that they impose no
additional constraint. At a terminal candidate step, \(x_L\) was frozen
before the analyst-only disclosure of \(U|_{I_L}\); it remains measurable
after that disclosure, so the same argument applies to its preimage.

Decompose
\[
 x_t=x_t^\parallel+x_t^\perp,\qquad
 x_t^\parallel\in U{\cal L}_t,\quad
 x_t^\perp\perp U{\cal L}_t.
\]
If \(x_t^\perp\ne0\), its latent direction
\[
 e_t=U^\top x_t^\perp/\norm{x_t^\perp}
\]
is conditionally uniform on the unit sphere of \({\cal L}_t^\perp\).
Independently of this direction and of the unused rotation, sample
\[
 a_t\sim N(0,P_{{\cal L}_t}),\qquad
 R_t\sim\chi_{d-\dim{\cal L}_t},
\]
and put \(g_t=a_t+R_te_t\).
Given the preceding augmented history and \(\Xi=z\), this vector
has exactly the \(N(0,I_d)\) distribution. Its projection on
\({\cal L}_t^\perp\) has a Gaussian radius and uniform independent
direction, while its component in \({\cal L}_t\) is an independent
Gaussian. If \(x_t^\perp=0\), instead take an independent
\(g_t\sim N(0,I_d)\).
Consequently the whole sequence \(g_1,\ldots,g_L\) is jointly iid
standard Gaussian conditional on \(\Xi=z\).

In either case,
\begin{equation}\label{geo:eq:new-start}
 \xi_t=U^\top x_t\in{\cal L}_t+\Span\{g_t\}.
\end{equation}
When the perpendicular component is nonzero, revealing \(g_t\)
after the response adds only the auxiliary variables \(a_t,R_t\)
to information already contained in the exposed direction
of \(\xi_t\). Their physical images are determined on the now
exposed subspace. The unused restriction of \(U\) remains Haar.
If \(t\le L_{\rm q}\), the response \(Qx_t\), together with \(\xi_t\),
additionally fixes the image of \(\overline Q\xi_t\). Lemma
\ref{geo:lem:completion}(i) then proves the conditional-Haar induction for
the next step. If \(t=L=L_{\rm q}+1\), no response is produced and no later
induction step is needed.

To prove the containment, it is convenient to use the full
cyclic span \({\cal R}\) of all \(L\) Gaussian starts in the
analysis. At step \(1\), \({\cal L}_1=I_1\), so
\eqref{geo:eq:new-start} implies \(\xi_1\in I_1+{\cal R}\).
Inductively, if \(\xi_i\in I_i+{\cal R}\) for \(i<t\), then
\[
 \overline Q\xi_i\in I_{i+1}+{\cal R}
                  \subseteq I_t+{\cal R}
\]
by \eqref{geo:eq:propagation} and invariance of \({\cal R}\).
The definition of \({\cal L}_t\) therefore implies
\({\cal L}_t\subseteq I_t+{\cal R}\).
Equation \eqref{geo:eq:new-start} gives \eqref{geo:eq:query-containment}.
At a response step, applying \(\overline Q\) gives the response containment.
\end{proof}

\begin{remark}[Why no independence from \(U\) is needed]
The canonical Gaussian starts are generally correlated with \(U\).
For example, the first fixed physical query with no disclosed
subspace makes the first start's direction equal to
\(U^\top x_1/\norm{x_1}\). What matters below is their iid
Gaussian law relative to the \emph{fixed canonical} objects
\(\overline Q,A_0,I_t\), conditional on independent physical
advice. No probabilistic claim is made after conditioning on \(U\).
\end{remark}

\begin{remark}[The strengthened latent-coordinate response]
The optional response \(U^\top x_t\) fixes the latent vector of the
one chosen physical query; together with \(Qx_t\), it exposes
the two pairs \((\xi_t,x_t)\) and
\((\overline Q\xi_t,Qx_t)\). It does not disclose the individual
columns of \(U\) on the support of \(\xi_t\).
Querying an already known \(Uc_0\) returns the already known
canonical \(c_0\); its matrix product provides the image of
\(\overline Qc_0\), which is precisely the allowed one-step
propagation. In particular, the argument never grants spectral
projectors applied to \(c_0\) or to any disclosed \(I_t\).
\end{remark}

\subsection{A uniform random-subspace angle bound}
\begin{lemma}[High multiplicity]\label{geo:lem:angle}
Let \(0<\zeta<1\), and define
\[
 s_0=(L+1)\log17+\log(4J/\zeta).
\]
If
\[
 K+1\ge128s_0,
\]
then, conditional on every value of the independent physical
advice, with probability at least \(1-\zeta\),
\begin{equation}\label{geo:eq:angle}
 \norm{P_{\cal R}|_{A_0}}\le\eta,\qquad
 \eta^2=\frac{32s_0}{K+1}\le\frac14.
\end{equation}
Here the probability includes the Haar rotation and the auxiliary
Gaussian construction of Lemma \ref{geo:lem:haar}.
\end{lemma}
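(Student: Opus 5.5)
Since Lemma~\ref{geo:lem:haar} gives that \(g_1,\ldots,g_L\) are iid \(N(0,I_d)\) conditional on \(\Xi\), and \(\overline Q,A_0,P_\lambda\) are fixed canonical objects, the quantity \(\|P_{\cal R}|_{A_0}\|\) is a function of the Gaussian matrix \(G=[g_1,\ldots,g_L]\) alone. The proof therefore becomes a statement about Gaussian vectors, and the Haar rotation plays no further role. The plan is to work one eigenspace at a time. Since \({\cal R}=\bigoplus_\lambda{\cal R}_\lambda\) with \({\cal R}_\lambda=\Span\{P_\lambda g_i\}\subseteq\range P_\lambda\), and \(A_0\) is \(\overline Q\)-invariant so that \(A_0=\bigoplus_\lambda(A_0\cap\range P_\lambda)\), the operator \(P_{\cal R}P_{A_0}\) is block diagonal across eigenspaces. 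Hence
\[
\|P_{\cal R}|_{A_0}\|=\max_\lambda\|P_{{\cal R}_\lambda}|_{A_0\cap\range P_\lambda}\|,
\]
and a union bound over the \(J\) eigenvalues reduces the claim to a per-eigenspace bound with failure probability \(\zeta/J\).

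Fix \(\lambda\) and identify \(\range P_\lambda\) with \(\R^{n}\), where \(n=(K+1)m_\lambda\). Inside it, \(B=A_0\cap\range P_\lambda\) has dimension \(m\) with \(m=m_\lambda=n/(K+1)\). The vectors \(h_i=P_\lambda g_i\) are iid standard Gaussian in \(\R^n\), so \({\cal R}_\lambda\) is the span of \(L\) Gaussians. We must bound \(\sup_{u\in B,\,w\in{\cal R}_\lambda,\ \text{unit}}\langle u,w\rangle\). Write the Gaussian matrix \(H=[h_1,\ldots,h_L]\) as \(H=\binom{H_B}{H_{B^\perp}}\). Then, for a unit vector \(u\in B\),
\[
\|P_{{\cal R}_\lambda}u\|\le \frac{\|H_B\|}{\sigma_{\min}(H)}\le\frac{\|H_B\|}{\sigma_{\min}(H_{B^\perp})}.
\]
The numerator is the norm of an \(m\times L\) Gaussian matrix, of size about \(\sqrt m+\sqrt L+\sqrt{2\log(2J/\zeta)}\). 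The denominator is the smallest singular value of an \((n-m)\times L\) Gaussian matrix, at least \(\sqrt{n-m}-\sqrt L-\sqrt{2\log(2J/\zeta)}\). These follow from Gordon/Davidson--Szarek concentration, or equivalently from a net argument.

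The main obstacle is the dependence on \(m_\lambda\). The naive ratio is about \(\sqrt{m/n}=1/\sqrt{K+1}\) only when \(m\gtrsim L+\log\). When \(m_\lambda\) is small, the ratio \(\sqrt{L+\log}/\sqrt{n}\) uses \(n=(K+1)m_\lambda\ge K+1\), which gives \(\eta^2\lesssim(L+\log)/(K+1)\). That is the claimed form once the terms are combined: \((\sqrt m+\sqrt{s})^2/n\le 2(m+s)/n\le 2/(K+1)+2s/(K+1)\), using \(n\ge m(K+1)\) and \(n\ge K+1\). To obtain the explicit constant \(\log 17\) and \(128\), I would instead run a net argument directly. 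Take a \(1/8\)-net \(\mathcal N\) of the unit sphere of \(\R^L\) (coefficient vectors, \(|\mathcal N|\le17^L\)). For each fixed coefficient vector \(c\), the vector \(Hc\) is a single Gaussian, and \(\|P_B Hc\|^2/\|Hc\|^2\) is Beta\((m/2,(n-m)/2)\)-distributed. A chi-square tail bound gives \(\Pr\{\text{ratio}>\eta^2/4\}\le e^{-s_0'}\)-type estimates. The extra \(17\) absorbs a net on the \(u\)-side, or a doubling over an approximation step, which accounts for \((L+1)\log17\). The factor \(4J/\zeta\) comes from the union over \(\lambda\) and the two tails.

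The approximation step is the delicate part. From \(\|P_B Hc\|\le(\eta/2)\|Hc\|\) on the net, one extends to all \(c\) using \(\|Hc-Hc'\|\le\|H\|\,\|c-c'\|\) together with a lower bound on \(\sigma_{\min}(H)\) relative to \(\|H\|\). The condition \(n\ge K+1\ge128 s_0\ge128L\) makes \(H\) well conditioned with high probability. Finally, \(\eta^2\le1/4\) is immediate from \(K+1\ge128s_0\).
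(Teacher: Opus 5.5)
Your core argument is the paper's: you block-diagonalize over the eigenspaces of $\overline Q$ and bound, in each eigenspace, $\|P_{\mathcal R_\lambda}u\|\le\|P_BH\|/\sigma_{\min}(H)$. Your replacement of $\sigma_{\min}(H)$ by $\sigma_{\min}(H_{B^\perp})$ is a harmless variant. You then use Gaussian singular-value bounds with $n=(K+1)m_\lambda$ and $m_\lambda\ge1$, and finish with a union bound over the $J$ eigenvalues.

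The only difference is that the paper proves the singular-value bounds by elementary nets instead of quoting Gordon's inequality. It uses a $1/8$-net of size $17^L$ for $\|G\|$ and $s_{\min}(G)$, and $1/4$-nets of sizes $9^{m_\lambda}$ and $9^L$ for $\|P_AG\|$. The $(L+1)$ in $s_0$ comes from $(m_\lambda+L)\log9/m_\lambda\le(L+1)\log9$.

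You do not need your Beta/ratio-net detour to reach the stated constants. Take $t^2=2\log(2J/\zeta)$. Then $\sqrt L+t\le2\sqrt{s_0}$ and $\sqrt{Km_\lambda}\ge\sqrt{127s_0}$. So the Gordon bounds you already wrote give a squared ratio below $9s_0/(K+1)$, well under $32s_0/(K+1)$.

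The detour is also the one step that fails as written. Bounding $\|P_BH(c-c')\|$ by $\|H\|\,\|c-c'\|\le\|H\|/8$ leaves an additive term $\|H\|/(8\|Hc\|)\ge1/8$ in the bound on the ratio, which is not $O(\eta)$ for small $\eta$. To repair it, you have two options:
\begin{itemize}
\item bound that term instead by $X\|H\|/8$, with $X$ the supremal ratio, and absorb it; this needs a condition-number bound such as $\|H\|\le\tfrac83\sigma_{\min}(H)$;
\item net $\|P_BH\|$ separately, as the paper does.
\end{itemize}
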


\begin{proof}
In a fixed eigenspace of multiplicity
\(D_\lambda=(K+1)m_\lambda\), the matrix
\[
 G_\lambda=
 [P_\lambda g_1\ \cdots\ P_\lambda g_L]
\]
has iid standard Gaussian entries in an orthonormal coordinate
system. Its column span is \({\cal R}_\lambda\).
The active subspace in this eigenspace,
\(A_{0,\lambda}=A_0\cap\range(P_\lambda)\), has dimension
\(m_\lambda\).

We record explicit elementary estimates. If \(G\) is a
\(D\times L\) iid Gaussian matrix, a \(1/8\)-net of the unit
sphere of \(\R^L\) can be chosen with at most \(17^L\) points.
For a fixed unit vector \(v\), \(\norm{Gv}^2\) is chi-square
with \(D\) degrees of freedom. Chernoff bounds give
\[
 \Pr\{\norm{Gv}^2\notin[D/2,2D]\}\le2e^{-D/16}.
\]
For example, the lower and upper tails follow by applying the
moment-generating function at parameters \(-1/2\) and \(1/4\);
both resulting exponential constants exceed \(1/16\).
If
\[
 D\ge16[L\log17+\log(4J/\zeta)],
\]
a union bound over the net therefore fails with probability at
most \(\zeta/(2J)\).
On the resulting event,
\[
 \norm G\le\frac87\sqrt{2D},\qquad
 s_{\min}(G)\ge
 \sqrt{D/2}-\frac18\frac87\sqrt{2D}
 >\frac12\sqrt D.
\]

Next let \(A\) be any fixed \(m\)-dimensional subspace.
The matrix \(P_A G\), expressed in an orthonormal basis of \(A\),
is \(m\times L\) iid Gaussian. Take \(1/4\)-nets in its two
unit spheres, of cardinalities at most \(9^m\) and \(9^L\).
Every fixed bilinear form on the two nets is a standard Gaussian.
With failure probability at most \(\zeta/(2J)\),
\[
 \norm{P_A G}\le
 2\sqrt{2[(m+L)\log9+\log(4J/\zeta)]}.
\]
The factor two comes from approximating both test vectors:
the two net errors sum to at most half of the operator norm.

Apply these estimates with
\(D=(K+1)m_\lambda\), \(m=m_\lambda\).
The assumed lower bound on \(K+1\) implies the displayed
condition on \(D\). On their joint event,
\[
 \norm{P_{{\cal R}_\lambda}|_{A_{0,\lambda}}}
 \le \frac{\norm{P_{A_{0,\lambda}}G_\lambda}}
              {s_{\min}(G_\lambda)}.
\]
Squaring and substituting the bounds gives
\[
 \norm{P_{{\cal R}_\lambda}|_{A_{0,\lambda}}}^2
 \le
 \frac{32[(m_\lambda+L)\log9+\log(4J/\zeta)]}
      {(K+1)m_\lambda}
 \le\frac{32s_0}{K+1}.
\]
A union bound over the \(J\) eigenspaces costs at most \(\zeta\).
Since both \(A_0\) and \({\cal R}\) are orthogonal sums over the
same eigenspaces, their projection operator norm is the maximum
of these block norms. This proves \eqref{geo:eq:angle}.
Independence of the eigenspace events is not needed.
\end{proof}

\subsection{Hidden readouts and the whole query span}
\begin{theorem}[Scheduled hidden-direction bound]\label{geo:thm:hidden}
Assume the model above, \(d>D_0+2L\), and
\(K+1\ge128[(L+1)\log17+\log(4J/\zeta)]\).
With probability at least \(1-\zeta\), simultaneously for every
\(1\le t\le L\) and every \(a\in A_0\cap I_t^\perp\),
\begin{equation}\label{geo:eq:hidden}
 \norm{P_{\Span\{x_1,\ldots,x_t\}}Ua}
 \le\frac{\eta}{\sqrt{1-\eta^2}}\norm a
 \le2\eta\norm a,
\end{equation}
where
\[
 \eta^2=\frac{32[(L+1)\log17+\log(4J/\zeta)]}{K+1}.
\]
The conclusion is uniform over arbitrary measurable adaptive
policies and allowed auxiliary reference responses.
\end{theorem}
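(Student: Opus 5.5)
The proof reduces the whole statement to the single event of Lemma~\ref{geo:lem:angle}. On that event we combine the containment of Lemma~\ref{geo:lem:haar} with an elementary two-subspace angle estimate. Because the event does not depend on $t$, on $a$, or on the policy beyond the conditional Gaussian law, the conclusion is automatically simultaneous over all $t\le L$ and all admissible $a$.

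\emph{Step 1 (reduction to canonical coordinates).} The hypothesis $d>D_0+2L$ lets us invoke Lemma~\ref{geo:lem:haar}. It gives canonical Gaussian starts with $U^\top x_i\in I_i+\mathcal R\subseteq I_t+\mathcal R$ for all $i\le t$, using the nesting of the $I_i$. Put $S_t=I_t+\mathcal R$. Then $\Span\{x_1,\ldots,x_t\}\subseteq US_t$. Since $U$ is orthogonal, monotonicity of projections onto nested subspaces gives
\[
 \norm{P_{\Span\{x_1,\ldots,x_t\}}Ua}\le\norm{P_{US_t}Ua}=\norm{P_{S_t}a}.
\]
It therefore suffices to bound $\norm{P_{I_t+\mathcal R}a}$ for $a\in A_0\cap I_t^\perp$.

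\emph{Step 2 (the angle event).} The hypothesis on $K+1$ is exactly the condition of Lemma~\ref{geo:lem:angle}, with $s_0=(L+1)\log17+\log(4J/\zeta)$. With probability at least $1-\zeta$, conditionally on every value of the advice $\Xi$ and hence also unconditionally, we have $\norm{P_{\mathcal R}v}\le\eta\norm v$ for all $v\in A_0$, with $\eta^2\le1/4$. This probability already accounts for the Haar rotation and the auxiliary Gaussian construction.

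\emph{Step 3 (deterministic two-subspace estimate on this event).} Take a unit vector $w\in I_t+\mathcal R$ and write $w=i+r$ with $i\in I_t\subseteq A_0$ and $r\in\mathcal R$. Because $i\in A_0$, we have $|\inner{i}{r}|=|\inner{P_{\mathcal R}i}{r}|\le\eta\norm i\norm r$. Hence
\[
 1=\norm{i+r}^2\ge\norm i^2+\norm r^2-2\eta\norm i\norm r\ge(1-\eta^2)\norm r^2,
\]
where the last inequality comes from minimizing the quadratic in $\norm i$. So $\norm r\le(1-\eta^2)^{-1/2}$. Since $a\perp I_t$ and $a\in A_0$,
\[
 |\inner aw|=|\inner ar|=|\inner{P_{\mathcal R}a}{r}|\le\eta\norm a\norm r\le\frac{\eta}{\sqrt{1-\eta^2}}\norm a.
\]
Taking the supremum over unit $w$ gives the first inequality in \eqref{geo:eq:hidden}. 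The second follows from $\eta^2\le1/4$, since then $(1-\eta^2)^{-1/2}\le 2/\sqrt3<2$.

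\emph{Main obstacle.} The analytic content is already in the two lemmas, so the delicate point is bookkeeping. One must check that the containment $U^\top x_i\in I_i+\mathcal R$ holds for the terminal candidate $x_L$ even in the case $L_{\rm q}=L-1$, which Lemma~\ref{geo:lem:haar} covers. One must also check that the angle event is controlled by the Gaussian law of the $g_i$ relative to the fixed canonical objects, without requiring independence from $U$. This is precisely the form in which Lemma~\ref{geo:lem:angle} is stated, so no further conditioning argument is needed, and uniformity over adaptive policies and allowed auxiliary responses is inherited from Lemma~\ref{geo:lem:haar}.
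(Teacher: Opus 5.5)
Your proof is correct and follows the paper's argument. You use the containment $U^\top x_i\in I_t+\mathcal R$ from Lemma~\ref{geo:lem:haar} to reduce the claim to bounding $\|P_{I_t+\mathcal R}a\|$ on the single event of Lemma~\ref{geo:lem:angle}, and then apply the elementary two-subspace estimate. The only cosmetic difference is how you get $\|w\|\ge\sqrt{1-\eta^2}\,\|r\|$: you expand $\|i+r\|^2$ and minimize over $\|i\|$, whereas the paper projects onto $I_t^\perp$ and uses $\|P_{I_t}v\|\le\|P_{A_0}v\|\le\eta\|v\|$ for $v\in\mathcal R$.
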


\begin{proof}
Work on the event of Lemma \ref{geo:lem:angle}.
For \(i\le t\), Lemma \ref{geo:lem:haar} and nestedness give
\[
 U^\top x_i\in I_i+{\cal R}\subseteq I_t+{\cal R}.
\]
It suffices to bound the projection of \(a\) onto
\(I_t+{\cal R}\). For \(v\in{\cal R}\),
\[
 \norm{P_{I_t}v}\le\norm{P_{A_0}v}\le\eta\norm v,
\]
because projection operator norms are unchanged by transposition.
Thus for \(w=i+v\), \(i\in I_t\),
\[
 \norm w\ge\norm{P_{I_t^\perp}v}
          \ge\sqrt{1-\eta^2}\norm v.
\]
Since \(a\perp I_t\), and \(a\in A_0\),
\[
 |\langle a,w\rangle|
   =|\langle a,v\rangle|
   \le\eta\norm a\norm v
   \le\frac{\eta}{\sqrt{1-\eta^2}}\norm a\norm w.
\]
Taking the supremum over unit vectors in \(I_t+{\cal R}\)
proves \eqref{geo:eq:hidden}. The constants give \(\eta\le1/2\),
so the last inequality follows.
\end{proof}

\begin{corollary}[Readout calendar]\label{geo:cor:calendar}
Let fixed canonical readouts \(a_s\in A_0\) have deterministic
disclosure times \(\tau_s\), with
\[
 a_s\perp I_t\quad(t<\tau_s),\qquad
 a_s\in I_t\quad(t\ge\tau_s).
\]
Then \eqref{geo:eq:hidden} applies simultaneously to every \(a_s\)
at every time \(t<\tau_s\), without an additional union bound over
the readouts or times. If \(\norm{x_i}\le R\), it implies
\[
 |\langle Ua_s,x_i\rangle|\le2\eta R\norm{a_s},
 \qquad i\le t<\tau_s.
\]
If a query is replaced inside the function by a radial squash
\(\rho(x)=q(\norm x)x\) with \(\norm{\rho(x)}\le R\), the same
bound holds for \(\rho(x_i)\), even for unbounded physical
queries, because radial squashing preserves their span.
\end{corollary}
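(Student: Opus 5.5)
The corollary follows by specializing Theorem~\ref{geo:thm:hidden} to each readout and then passing from the span bound to individual inner products. I will take the high-probability event of Theorem~\ref{geo:thm:hidden} (equivalently, the event of Lemma~\ref{geo:lem:angle}) once and for all.

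That event is stated simultaneously for every time \(t\le L\) and every vector \(a\in A_0\cap I_t^\perp\). Since \(a_s\in A_0\) and \(a_s\perp I_t\) whenever \(t<\tau_s\), each pair \((s,t)\) with \(t<\tau_s\) is already one of the instances covered by the theorem. So no union bound over readouts or times is needed: a single event of probability at least \(1-\zeta\) controls all of them.

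Next I convert the span bound into inner-product bounds. Fix \(i\le t<\tau_s\). Then \(x_i\in\Span\{x_1,\ldots,x_t\}=:S_t\), and hence
\[
 |\langle Ua_s,x_i\rangle|=|\langle P_{S_t}Ua_s,x_i\rangle|\le\norm{P_{S_t}Ua_s}\,\norm{x_i}\le 2\eta\norm{a_s}R,
\]
using \eqref{geo:eq:hidden} and the hypothesis \(\norm{x_i}\le R\).

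For the radial squash, the key observation is that \(\rho(x_i)=q(\norm{x_i})x_i\) is a scalar multiple of \(x_i\). Therefore \(\rho(x_i)\in S_t\) even when \(x_i\) itself is unbounded. The same Cauchy--Schwarz step then applies with \(\norm{\rho(x_i)}\le R\) in place of \(\norm{x_i}\). Note that the span bound never used any norm on the queries: Theorem~\ref{geo:thm:hidden} holds for arbitrary measurable queries.

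The only point requiring care, which is mild, is confirming that the simultaneity over \(t\) and over all \(a\in A_0\cap I_t^\perp\) in Theorem~\ref{geo:thm:hidden} really comes from a single event. This holds because the proof of that theorem works on the fixed event of Lemma~\ref{geo:lem:angle}, whose definition does not depend on \(a\) or \(t\). The calendar condition \(a_s\in I_t\) for \(t\ge\tau_s\) is not needed for this bound; it only records when the readout becomes disclosed.
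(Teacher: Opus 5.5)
Your proposal is correct and follows the same route as the paper. The paper gives no separate proof: the corollary is read off from the uniformity of Theorem~\ref{geo:thm:hidden} over all $t\le L$ and all $a\in A_0\cap I_t^\perp$ on the single event of Lemma~\ref{geo:lem:angle}, followed by Cauchy--Schwarz against $x_i\in\Span\{x_1,\ldots,x_t\}$ and the observation that $\rho(x_i)$ is a scalar multiple of $x_i$. Your write-up makes exactly these steps explicit.
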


\paragraph{Example: the analytic block-chain layer schedule.}
Suppose the canonical active coordinates are indexed by
\((s,j)\), \(1\le s\le N\), \(1\le j\le M\), and the readout in
block \(s\) is supported in the suffix \(j\ge j_\star\).
Assign layer
\[
 \ell(s,j)=(s-1)j_\star+\min\{j,j_\star\}.
\]
Let \(I_t\) be the span of active coordinates of layer at most \(t\).
Every nonzero quadratic entry joins layers differing by at most one:
within-block path edges join consecutive layers, suffix stabilization joins
equal layers, and interblock edges join a suffix of layer $sj_\star$ to the
next entry of layer $sj_\star+1$. Diagonal entries preserve layers. These
exhaust the matrix support and prove
\(\overline QI_t\subseteq I_{t+1}\).
Every coordinate of readout \(a_s\) has layer \(sj_\star\), so one
can choose \(\tau_s=sj_\star\). The simultaneous suffix-coordinate
jump in \(I_t\) is allowed by the theorem; its dimension may jump
by more than one at a disclosure time.

\subsection{Reference-noise conditioning and what is not granted}
\begin{corollary}[Innovation Gaussian reference responses]
Suppose a reference response uses exogenous physical Gaussian
vectors \(\zeta_{k,t}\), independent of \(U\), and has the form
\[
 Y_{k,t}=\mu_{k,t}
       +\Sigma_{k,t}^{1/2}\zeta_{k,t},
\]
where \(\mu_{k,t}\) and \(\Sigma_{k,t}\) are measurable functions
of physical query history, earlier responses, matrix products,
and planted tensors formed only from currently disclosed
readouts \(Ua_s\in UI_t\). Then Theorem \ref{geo:thm:hidden} applies
to the adaptive query policy driven by these responses.
\end{corollary}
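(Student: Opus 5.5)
The plan is to reduce the statement to Theorem~\ref{geo:thm:hidden} by exhibiting the reference-driven policy as an allowed policy in the model of this appendix. The auxiliary physical randomness $\Xi$ will consist of the policy's private seed together with the whole exogenous array $(\zeta_{k,t})_{k,t}$. By hypothesis this array is independent of $U$, and it is also distinct from the static tensors $Z_3,Z_4$. So $\Xi$ is admissible advice, and Lemma~\ref{geo:lem:angle} holds conditional on every value of it.

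The next step is to check that each reference response is an allowed auxiliary response. For this I will go through the ingredients of $\mu_{k,t}$ and $\Sigma_{k,t}$ one at a time:
\begin{enumerate}[leftmargin=*]
\item Physical queries, earlier responses and matrix products are exactly the exposed pairs, including the strengthened responses \eqref{geo:eq:strong-response}.
\item Planted tensors built from currently disclosed readouts $Ua_s$ with $a_s\in I_t$ are polynomial functions of the disclosed restriction $U|_{I_t}$, since $Ua_s$ is a fixed linear combination of the disclosed basis images.
\item Fitted means such as $M_{k,t-1}$ and the maps $L_{k,t}$, $B_{k,t}$ are measurable functions of the query history.
\end{enumerate}
The remaining operations are a measurable square root $\Sigma^{1/2}$, for instance the PSD square root, which is continuous, and multiplication by $\zeta_{k,t}$. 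After fixing $\Xi$, the response $Y_{k,t}$ is therefore deterministic postprocessing of exactly the permitted information. By the last clause of Lemma~\ref{geo:lem:completion}, the conditional-Haar induction in Lemma~\ref{geo:lem:haar} goes through unchanged.

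The query $x_{t+1}$ is a measurable function of $\Xi$, the disclosures, and the previous responses, reference responses included. The policy is therefore an allowed measurable adaptive policy, and Theorem~\ref{geo:thm:hidden} gives its conclusion, with the uniformity over policies and advice values stated there.

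The main point needing care is the timing of planted readouts. Any tensor used at step $t$ must involve only $a_s$ with $\tau_s\le t$, so that it is measurable with respect to the disclosure available at the start of step $t$. Any later readout would inject undisclosed columns of $U$, and that is exactly what is forbidden. I would verify this against the calendar of Corollary~\ref{geo:cor:calendar}.

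I would also confirm one measurability point. Because $\Sigma_{k,t}$ may be singular, with $B=0$ for repeated queries, the square root must be chosen as one fixed Borel version. Then no hidden dependence enters through the choice of version, and the standard-Borel convention in the oracle model covers the rest.
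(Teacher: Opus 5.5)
Your proposal is correct and follows essentially the same route as the paper. You absorb the innovation seeds (and the private seed) into the independent advice $\Xi$; once $\Xi$ is fixed, every reference reply, including least-norm fits, observation projectors, and visible planted tensors, is deterministic postprocessing of the exposed query pairs and scheduled disclosures, so the postprocessing clause of Lemma~\ref{geo:lem:completion} lets Theorem~\ref{geo:thm:hidden} hold conditionally on each advice value and hence after averaging.
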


\begin{proof}
Include the finite collection of \(\zeta_{k,t}\) in the physical
advice \(\Xi\). This advice is independent of \(U\).
Conditional on it, every reference reply is deterministic
postprocessing of the allowed transcript and scheduled images.
It therefore imposes no additional constraint on unused columns
of \(U\) beyond the pairs tracked in Lemma \ref{geo:lem:haar}.
The theorem is conditional on each value of this advice and hence
also holds after averaging over it.
The mean may include a least-norm fit of earlier tensor responses,
and the covariance may include orthogonal projections determined
by all past physical query vectors. Those calculations are
functions of the permitted history and cause no exception.
\end{proof}

\paragraph{Limits of this reduction.}
An auxiliary mean depending on an undisclosed planted readout is
not covered. Nor is an auxiliary covariance depending directly
on an undisclosed orientation of \(U\).
The theorem does not condition on the entire physical matrix \(Q\);
the policy receives only the counted matrix-vector products.
It also never discloses \(P_\lambda U^\top c\) or the images of the
spectral closure of \(I_t\).
The spectral closure is taken only for the canonical Gaussian
innovations in an upper bound on the information span.

Auxiliary gradient-noise vectors themselves need not lie in
\(U(I_t+{\cal R})\) at the instant they are returned. This is
unnecessary: once a policy uses any function of those vectors as
its next query or its final output, it is covered by the
one-new-query Haar argument. The theorem bounds the entire span
of query arguments, which is the subspace needed in tensor
observation projection bounds.

\paragraph{Uniformity without a common event for all policies.}
The deferred-Haar coupling and its geometric good event can depend on the
policy. Their probability estimates are uniform in the policy, and the
canonical matrix and the distribution of the original rotation do not change.
A single good event holding simultaneously for every policy is neither
asserted nor needed. The Gaussian starts are auxiliary variables in a coupling,
not part of the hard-instance prior.

\section{Exact Gaussian comparison and transfer to fixed functions}\label{app:gaussian}
This appendix proves the information comparison independently of the analytic
construction. The Gaussian calculations use the unconditioned static-tensor
prior. The final proposition alone conditions on the norm event required for
admissibility.

\subsection{Observation model and exact adaptive Gaussian comparison}
Fix positive integers $d,N,T$ and let
$\mathcal H_k=\Sym^k(\R^d)$, $k\in\{3,4\}$, with the Frobenius inner product
inherited from $(\R^d)^{\otimes k}$.  Symmetrization is the average over all
permutations.  In particular $\|b^{\otimes k}\|_F=1$ for a unit vector $b$.
A standard Gaussian $Z_k$ in $\mathcal H_k$ means an isonormal Gaussian for
this inner product.  Let $Z_3,Z_4$ be independent.

Let $U$ be an auxiliary random parameter, independent of $Z_3,Z_4$ and of the
algorithm's random seed $\omega$.  In the application $U$ is a Haar orthogonal
matrix.  Let $b_s=b_s(U)\in\R^d$, $1\leq s\leq N$, be unit vectors, and put
\[
 S_k(U)=w_k\sum_{s=1}^N b_s^{\otimes k},\qquad
 \Theta_k=S_k(U)+\sigma_k Z_k,\qquad \sigma_k>0.
\]
The $b_s$ may be orthogonal, but orthogonality is not needed for the Gaussian
comparison below.  Fix disclosure times
$\tau_s\in\{1,\ldots,T+1\}\cup\{\infty\}$ and define
\[
 S_{k,\vis}(t)=w_k\sum_{s:\,\tau_s\leq t}b_s^{\otimes k},\qquad
 S_{k,\hid}(t)=w_k\sum_{s:\,t<\tau_s}b_s^{\otimes k}.
\]

Before query $t$, an algorithm receives specified side information about $U$,
including the vectors $b_s$ with $\tau_s\leq t$.  It selects a vector $z_t$
as an arbitrary measurable function of its seed and its available history.
Any deterministic radial reparametrization of an unrestricted query may be
included in this policy.  It then receives the two tensor channels
\begin{equation}\label{gau:eq:actual-channel}
 Y_{k,t}=L_{k,t}\Theta_k,
 \qquad L_{k,t}A=A[z_t^{\otimes(k-1)},\,\cdot]\in\R^d.
\end{equation}
The two answers are revealed in the fixed order $k=3$ and then $k=4$, with no
policy action between the two reveals. Lemma~\ref{gau:lem:factorization}
below proves the product-kernel property for the true static tensors; for the
reference it follows immediately from the independent current innovation
seeds. Hence the two conditional KL contributions add within the round.
Additional side answers are allowed if, conditional on $U$, they are
deterministic functions of the selected queries and the existing history.
For example, a matrix-vector product $Q_Uz_t$, scheduled columns of $U$, and
corrections computable from disclosed readouts satisfy this condition.
These side answers must not reveal additional information about the
unobserved Gaussian tensors.

For each realized history define
\begin{align*}
 V_{k,t}
 &=\spanop\{\Sym(z_i^{\otimes(k-1)}\otimes v):
             1\leq i\leq t,\ v\in\R^d\},\qquad V_{k,0}=\{0\},\\
 P_{k,t}&=P_{V_{k,t}},\qquad
 \Delta V_{k,t}=V_{k,t}\cap V_{k,t-1}^{\perp},\\
 B_{k,t}&=L_{k,t}(I-P_{k,t-1}),\qquad
 C_{k,t}=B_{k,t}B_{k,t}^{*}
        =L_{k,t}(I-P_{k,t-1})L_{k,t}^{*}.
\end{align*}
Let $M_{k,t-1}\in V_{k,t-1}$ be the minimum Frobenius norm tensor fitting all
past answers in channel $k$.  Set $M_{k,0}=0$.  All these objects are computed
from the process's own query and answer history.

\begin{lemma}[Adaptive two-channel posterior factorization]
\label{gau:lem:factorization}
Let $\mathcal F_{t-1}$ denote the complete pre-query history, including all
allowed side information but excluding the current query and current tensor
answers. Under the true static-tensor experiment, for every $t$ and for a
fixed regular conditional version,
\begin{align}
&\mathcal L\bigl(\Theta_3,\Theta_4
       \mid \mathcal F_{t-1},U,\omega\bigr)
\nonumber\\
&\qquad=
\bigotimes_{k=3}^4
N_{\mathcal H_k}\!\left(
 M_{k,t-1}+(I-P_{k,t-1})S_k,
 \sigma_k^2(I-P_{k,t-1})
\right),
\label{gau:eq:posterior-product}
\end{align}
where the displayed Gaussians may be degenerate on their affine supports.
Consequently, conditional on the pre-round history and the predictable common
query $z_t$, $(Y_{3,t},Y_{4,t})$ has the product of the two channel kernels.
\end{lemma}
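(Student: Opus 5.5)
We will prove \eqref{gau:eq:posterior-product} by induction on $t$, using the standard Gaussian conditioning formula on the Hilbert space $\mathcal H_k$. Throughout we condition on $(U,\omega)$. These variables are independent of $(Z_3,Z_4)$, so the conditional prior of $\Theta_k$ is $N_{\mathcal H_k}(S_k(U),\sigma_k^2 I)$, independently over $k$. This gives the base case $t=1$, because $V_{k,0}=\{0\}$, $P_{k,0}=0$, and $M_{k,0}=0$.

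\emph{Inductive step.} Suppose \eqref{gau:eq:posterior-product} holds at the start of round $t$. The query $z_t$ is a measurable function of $\omega$ and $\mathcal F_{t-1}$. The permitted side answers are deterministic functions of $U$, the queries, and the history, so adjoining them does not change the conditional law of $(\Theta_3,\Theta_4)$. The round's answers are the linear observations $Y_{k,t}=L_{k,t}\Theta_k$, revealed first for $k=3$ and then for $k=4$.

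By the product hypothesis, observing $Y_{3,t}$ updates only the $\Theta_3$ factor. Then $Y_{4,t}$, which is a function of $\Theta_4$ and the already fixed $z_t$, updates only the $\Theta_4$ factor. So the product structure persists.

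\emph{Single-channel update.} Fix one channel and drop the index $k$. The current law is $N(m,\sigma^2\Pi)$ with $\Pi=I-P_{t-1}$ and $m=M_{t-1}+\Pi S$. We observe $Y=L_tX$.

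Write $X=m+\sigma\Pi G$ with $G$ standard Gaussian. Then $Y-L_tm=\sigma B_tG$, where $B_t=L_t\Pi$. The conditional law of $\Pi G$ given $B_tG$ is Gaussian with covariance $\Pi-P_{\mathrm{range}(B_t^*)}$ and mean $P_{\mathrm{range}(B_t^*)}G$; since $\mathrm{range}(B_t^*)\subseteq\mathrm{range}(\Pi)$, the projection of $\Pi G$ and of $G$ onto it agree. The identity $B^*(BB^*)^\dagger B=P_{\mathrm{range}(B^*)}$ lets us write this mean as a measurable function of the observation.

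The key geometric fact is $\mathrm{range}(B_t^*)=\Pi\,\mathrm{range}(L_t^*)=\Delta V_{t}$, where $\mathrm{range}(L_t^*)=\{\Sym(z_t^{\otimes(k-1)}\otimes v):v\in\R^d\}$. This follows from $V_t=V_{t-1}+\mathrm{range}(L_t^*)$ and $\Pi=P_{V_{t-1}^\perp}$. Hence the new covariance is $\sigma^2(I-P_{V_t})$.

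The new mean is $m+P_{\Delta V_t}(X-m)$, which equals $P_{V_t}X+(I-P_{V_t})S$. To see this, note $P_{V_t}X$ is determined by all answers so far: the minimum-norm fit $M_t$ is exactly the element of $V_t$ consistent with the data, because $X-P_{V_t}X$ is annihilated by every $L_i$, $i\le t$. Therefore the new mean is $M_t+(I-P_{V_t})S$, which closes the induction.

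\emph{Consequence for the round kernel.} Given the pre-round history, $Y_{k,t}$ has law $N(L_tM_{t-1}+B_tS,\ \sigma^2B_tB_t^*)$ in each channel. Conditional independence across channels follows from the product form, which gives the claimed product kernel.

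\emph{Main obstacle.} The delicate point is measure-theoretic rather than algebraic. The Gaussians are degenerate, and the query $z_t$ is chosen adaptively. We will handle this by writing the conditional law explicitly as a measurable kernel in $(z_{1:t},Y_{1:t},U,\omega)$, supported on the affine set $M_t+V_t^\perp$. We then verify the defining disintegration identity for the one-step Gaussian conditioning with $z_t$ fixed, and integrate over the predictable $z_t$. This step is legitimate because $z_t$ is $\mathcal F_{t-1}\vee\sigma(\omega)$-measurable and $\Theta$ enters only through $Y$. Repeated queries give $B_t=0$, in which case the kernel is the identity update, so no separate case is needed.
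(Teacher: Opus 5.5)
Your proposal is correct and is essentially the paper's proof. You condition on $(U,\omega)$ and induct, and the product form is preserved because each round's likelihood splits into a factor depending only on $\Theta_3$ and one depending only on $\Theta_4$. Your explicit single-channel update, using $\range(B_t^*)=\Delta V_{k,t}$ so that $(I-P_{k,t-1})-P_{\Delta V_{k,t}}=I-P_{k,t}$ and the new mean is $M_{k,t}+(I-P_{k,t})S_k$, spells out what the paper compresses into two facts: the past answers determine exactly $P_{k,t-1}\Theta_k$, and the remainder follows by orthogonal Gaussian decomposition.
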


\begin{proof}
Condition on $U$ and $\omega$. At time zero the claim follows from the
independence of $Z_3$ and $Z_4$. Suppose it holds before round $t$. The query
$z_t$, and hence each linear map $L_{k,t}$, is measurable with respect to the
pre-round history. Conditional on that history, applying $L_{3,t}$ and
$L_{4,t}$ to the two independent posterior factors therefore produces a
product response kernel. Conditioning on the two realized answers preserves
the product form, because the joint likelihood is the product of one
likelihood depending only on $\Theta_3$ and one depending only on $\Theta_4$.
Allowed side answers are deterministic once $U$ and the existing history are
fixed and hence add no tensor likelihood factor.

For a single channel, the combined past observation map has row space
$V_{k,t-1}$ and kernel $V_{k,t-1}^{\perp}$. Its affine fiber therefore
determines precisely the projection of $\Theta_k$ onto $V_{k,t-1}$, namely
the unique minimum-norm fitter $M_{k,t-1}$. Orthogonal Gaussian decomposition
then gives
\[
 \Theta_k
 =M_{k,t-1}+(I-P_{k,t-1})S_k
   +\sigma_k(I-P_{k,t-1})\widetilde Z_k,
\]
with $\widetilde Z_k$ standard Gaussian in $\mathcal H_k$. This is the factor
appearing in \eqref{gau:eq:posterior-product}. The induction proves the true
posterior factorization for all rounds. Existence of the regular conditional
versions is guaranteed by the standard-Borel convention in the oracle model.
\end{proof}

\begin{theorem}[Innovation-censored Gaussian reference]\label{gau:thm:innovation}
There is a causal reference process with the same prior for $(U,\omega)$,
the same policy and side-answer rules, and channel answers
\begin{equation}\label{gau:eq:reference}
 Y^0_{k,t}
 =L_{k,t}M_{k,t-1}
  +B_{k,t}S_{k,\vis}(t)
  +\sigma_k C_{k,t}^{1/2}\zeta_{k,t},
\end{equation}
where all physical vectors $\zeta_{k,t}\sim N(0,I_d)$ are independent of each
other and of $(U,\omega)$.  Every reference history admits a tensor fitting
all its past channel answers.

Write $\Ptrue$ for the law of $(U,\omega,\text{transcript})$ generated by
\eqref{gau:eq:actual-channel} and $\Pref$ for its reference law.  Then
\begin{equation}\label{gau:eq:exact-kl}
 \boxed{\quad
 \KL(\Pref\Vert\Ptrue)
 =\sum_{k=3}^4\frac{1}{2\sigma_k^2}
   \sum_{t=1}^T\E_{\Pref}
    \bigl\|P_{\Delta V_{k,t}}S_{k,\hid}(t)\bigr\|_F^2 .
 \quad}
\end{equation}
The formula includes degenerate measurements and repeated queries.  In
particular, repeating a fully determined measurement produces its old
answer in the reference process.
\end{theorem}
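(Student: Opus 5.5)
The plan is to build the reference process causally, round by round, and then compute the KL divergence by the chain rule over rounds. Two steps come first.

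\textbf{Reference process.} Given a reference history, the policy selects $z_t$. We compute $M_{k,t-1}$, $P_{k,t-1}$ and $B_{k,t}$, and we output \eqref{gau:eq:reference} using fresh seeds $\zeta_{k,t}$.

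\textbf{Well-posedness.} We check by induction that every reference history admits a fitting tensor. The new answer equals $L_{k,t}$ applied to the tensor
\[
M_{k,t-1}+(I-P_{k,t-1})S_{k,\vis}(t)+\sigma_k(I-P_{k,t-1})L_{k,t}^*C_{k,t}^{\dagger}C_{k,t}^{1/2}\zeta_{k,t}.
\]
This works because $C^{1/2}\zeta$ lies in $\range(C)=\range(B_{k,t})$. The tensor also agrees with all past answers, since the added parts lie in $V_{k,t-1}^\perp$. So fitting persists, and $M_{k,t}$ is defined.

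\textbf{The true kernel at a common history.} By Lemma~\ref{gau:lem:factorization}, conditional on $(\mathcal F_{t-1},U,\omega)$ and $z_t$, the true answer $Y_{k,t}$ is Gaussian. Its mean is $L_{k,t}M_{k,t-1}+B_{k,t}S_k$ and its covariance is $\sigma_k^2C_{k,t}$. The reference kernel has the same covariance and mean $L_{k,t}M_{k,t-1}+B_{k,t}S_{k,\vis}(t)$. The two means differ by $B_{k,t}S_{k,\hid}(t)$, which lies in $\range(B_{k,t})$, so the two degenerate Gaussians live on the same affine support. The conditional KL is therefore
\[
\frac1{2\sigma_k^2}\inner{S_{k,\hid}}{B^*(BB^*)^\dagger BS_{k,\hid}}.
\]
The identity $B^*(BB^*)^\dagger B=P_{\range(B^*)}$ applies here. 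Since $\range(B_{k,t}^*)=(I-P_{k,t-1})\range(L_{k,t}^*)$ and $V_{k,t}=V_{k,t-1}+\range(L_{k,t}^*)$, this range equals $\Delta V_{k,t}$; here $\range(L^*)$ consists of the symmetrized $z_t^{\otimes(k-1)}\otimes v$. A repeated query has $\range(L^*)\subseteq V_{k,t-1}$, hence $B=0$: both kernels are the point mass at the old fit and contribute zero.

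\textbf{Chain rule.} The two laws share the prior for $(U,\omega)$, the policy kernels for $z_t$, and the deterministic side answers, so those factors contribute nothing. Within a round, the product structure (Lemma~\ref{gau:lem:factorization} for the true process, independent seeds for the reference) makes the two channel contributions add. Summing over rounds and taking $\E_{\Pref}$ gives \eqref{gau:eq:exact-kl}.

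\textbf{Main obstacle: measure-theoretic rigor.} The true conditional kernel must be evaluated on reference histories. I would prove inductively that $\Pref$ restricted to the first $t$ rounds is absolutely continuous with respect to $\Ptrue$ restricted to the same rounds. At each step the kernels are mutually absolutely continuous on the common affine support, and that support is determined by the history alone. Along the way I would define a version of the true kernel, via the fitter formula, on every history admitting a fit, not merely $\Ptrue$-almost surely. With that in place, the disintegration chain rule for KL applies in standard Borel spaces. If $\Pref\not\ll\Ptrue$ could occur, the identity would fail, so this induction is the point I would verify most carefully.
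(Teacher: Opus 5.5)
Your proposal is correct and follows essentially the same route as the paper. Both arguments compare the equal-covariance Gaussian kernels at a common history, apply $B^*(BB^*)^\dagger B=P_{\range(B^*)}=P_{\Delta V_{k,t}}$, prove absolute continuity by induction on the shared affine support, and then invoke the KL chain rule with additive within-round channel contributions. The only cosmetic difference is in the well-posedness step: you exhibit an explicit (not minimum-norm) fitting tensor, whereas the paper writes the minimum-norm update $M_{k,t}=M_{k,t-1}+B_{k,t}^{*}C_{k,t}^{\dagger}r_{k,t}$ directly.
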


\begin{proof}
First condition on $U$, the algorithm's seed, and the full history before
query $t$. By Lemma~\ref{gau:lem:factorization}, the two tensor posteriors
factorize and each channel has the posterior
\begin{equation}\label{gau:eq:posterior}
 \Theta_k\mid\text{history},U,\omega
 \ \sim\
 M_{k,t-1}+(I-P_{k,t-1})S_k
       +\sigma_k(I-P_{k,t-1})\widetilde Z_k,
\end{equation}
where the remaining $\widetilde Z_k$ are conditionally independent standard
Gaussians. Together with the fixed within-round reveal order specified above,
this identifies the joint same-round conditional law as the product of the
two displayed kernels.

Consequently the true conditional law of $Y_{k,t}$ is
\begin{equation}\label{gau:eq:true-kernel}
 N\bigl(L_{k,t}M_{k,t-1}+B_{k,t}S_k,
        \sigma_k^2 C_{k,t}\bigr).
\end{equation}
The reference kernel \eqref{gau:eq:reference} replaces only the new signal
innovation by its currently visible part.  Let
$r_{k,t}=Y^0_{k,t}-L_{k,t}M_{k,t-1}$.  Almost surely
$r_{k,t}\in\range(B_{k,t})=\range(C_{k,t})$, so
\begin{equation}\label{gau:eq:fitter-update}
 M_{k,t}=M_{k,t-1}
   +B_{k,t}^{*}C_{k,t}^{\dagger}r_{k,t}
\end{equation}
fits the old answers and the new one.  It lies in $V_{k,t}$ and is therefore
the unique minimum-norm fitter.  This proves that the reference is defined
inductively at every step.  If $B_{k,t}=0$, both the new mean contribution and
the new noise vanish; this proves consistency for repeated measurements.

At the same realized history the true and reference kernels have the same
covariance.  Their mean difference is $B_{k,t}S_{k,\hid}(t)$, which belongs
to its range.  The KL divergence between these possibly singular Gaussian
kernels is therefore
\begin{align*}
 \frac{1}{2\sigma_k^2}
 \bigl\langle B_{k,t}S_{k,\hid}(t),
 C_{k,t}^{\dagger}B_{k,t}S_{k,\hid}(t)\bigr\rangle
 &=\frac{1}{2\sigma_k^2}
 \bigl\|P_{\range(B_{k,t}^{*})}S_{k,\hid}(t)\bigr\|_F^2\\
 &=\frac{1}{2\sigma_k^2}
 \bigl\|P_{\Delta V_{k,t}}S_{k,\hid}(t)\bigr\|_F^2.
\end{align*}
Here $B^*(BB^*)^{\dagger}B$ is the orthogonal projector onto
$\range(B^*)$. Moreover,
\[
 V_{k,t}=V_{k,t-1}\oplus
 \range\bigl((I-P_{k,t-1})L_{k,t}^*\bigr),
\]
so $\range(B_{k,t}^{*})=\Delta V_{k,t}$.
To justify using the true conditional kernel along reference histories,
we establish absolute continuity by induction. The history laws begin with
the same prior. At each common feasible history the next channel kernels
have the same affine support
\[
 L_{k,t}M_{k,t-1}+\range(B_{k,t}).
\]
Their covariance is positive definite on its range, so both densities on
this affine support are strictly positive. In zero dimension they are the
same point mass. Thus the next reference kernel is absolutely continuous
with respect to the true one. Identical predictable query choices and
deterministic side-answer rules preserve this property. This induction makes
the posterior version valid on reference histories and permits the KL chain
rule. The common prior, predictable queries, and identical deterministic
side answers contribute zero KL. The KL chain rule is applied in the fixed
within-round order $k=3,4$, so conditional independence makes the two
contributions additive. Summing the conditional divergences proves
\eqref{gau:eq:exact-kl}. All spaces are finite dimensional; pseudoinverses and
positive semidefinite square roots give Borel measurable kernels. The regular
conditional versions invoked here are available under the standard-Borel
convention in the oracle model.
\end{proof}

\begin{remark}[Why only the innovation is censored]
Subtracting a hidden signal afresh from a static noisy answer at each time
would change old answers when a signal becomes visible.  That construction
can have singular replay distributions.  In \eqref{gau:eq:reference} every old
measurement component is retained in $M_{k,t-1}$; only directions newly
measured at time $t$ receive the censored mean.  This is the reason the
conditional covariance and support agree exactly with the static model.
\end{remark}

\subsection{Converting small hidden angles to a KL bound}
\begin{lemma}[Rank-one tensor projection]\label{gau:lem:rankone}
Let $W\subseteq\R^d$ be a linear subspace, and suppose
\[
 V\subseteq\spanop\{\Sym(x^{\otimes(k-1)}\otimes v):
                       x\in W,\ v\in\R^d\}.
\]
For every unit vector $b$,
\begin{equation}\label{gau:eq:rankone-bound}
 \|P_V b^{\otimes k}\|_F^2
 \leq k\|P_Wb\|^{2(k-1)}.
\end{equation}
\end{lemma}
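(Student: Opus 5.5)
Since $P_V\le P_{V'}$ for $V\subseteq V'$, we may replace $V$ by the full space $\mathcal V_k(W)=\spanop\{\Sym(x^{\otimes(k-1)}\otimes v):x\in W,\ v\in\R^d\}$. The plan is to identify this space and its orthogonal complement explicitly in coordinates adapted to $W$. Choose an orthonormal basis $e_1,\ldots,e_m$ of $W$ and complete it to an orthonormal basis of $\R^d$. Symmetric tensors expand in the orthogonal basis of symmetrized monomials $\Sym(e_{i_1}\otimes\cdots\otimes e_{i_k})$, indexed by multisets. Expanding $x^{\otimes(k-1)}$ with $x\in W$ shows that every generator is a combination of monomials in which at least $k-1$ of the $k$ indices lie in $W$. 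Conversely, every such monomial is a generator: polarization of $x^{\otimes(k-1)}$ over $x\in W$ produces all $\Sym(w_1\otimes\cdots\otimes w_{k-1})$ with $w_i\in W$. Hence $\mathcal V_k(W)$ is exactly the span of monomials with at most one index outside $W$, and its orthogonal complement is spanned by monomials with at least two indices in $W^\perp$.

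Next I decompose the rank-one tensor. Write $b=\alpha u+\beta u'$ with $u\in W$, $u'\in W^\perp$ unit vectors, $\alpha=\|P_Wb\|$, and $\alpha^2+\beta^2=1$. The binomial expansion gives
\[
 b^{\otimes k}=\sum_{j=0}^k \alpha^{k-j}\beta^{j}\,\Sigma_j ,
\]
where $\Sigma_j$ is the sum over placements of $j$ copies of $u'$ among the $k$ tensor slots. The terms $\Sigma_j$ are mutually orthogonal, and $\|\Sigma_j\|_F^2=\binom kj$. The terms with $j\le1$ lie in $\mathcal V_k(W)$ and those with $j\ge2$ lie in its complement. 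Therefore
\[
 \|P_{\mathcal V_k(W)}b^{\otimes k}\|_F^2=\alpha^{2k}+k\alpha^{2(k-1)}\beta^2
 \le \alpha^{2(k-1)}(\alpha^2+k\beta^2)\le k\alpha^{2(k-1)},
\]
using $\alpha^2+k\beta^2\le k(\alpha^2+\beta^2)=k$. This is exactly \eqref{gau:eq:rankone-bound}.

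The only delicate step is the exact identification of $\mathcal V_k(W)$ with the span of monomials having at most one index outside $W$. Specifically, one must verify that the generators $\Sym(x^{\otimes(k-1)}\otimes v)$ cannot reach monomials with two or more $W^\perp$ indices. This holds because each generator is multilinear with $k-1$ arguments in $W$. For the inequality itself only this inclusion is needed, not equality. I expect no further obstacles; the boundary cases $\alpha=0$ or $\beta=0$ are covered by the same formula.
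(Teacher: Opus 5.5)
Your proof is correct. It is not the argument the paper uses to prove this lemma, but it is essentially the computation in the paper's remark on the exact projection formula, stated right after the lemma.

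The paper's proof works in the full tensor product space. It uses the commuting orthogonal projectors $E_j$, which act as the identity in slot $j$ and as $P_W$ in every other slot. Each generator $\Sym(x^{\otimes(k-1)}\otimes v)$ with $x\in W$ lies in $\sum_j\range(E_j)$. Because the $E_j$ commute, the projector onto that sum is dominated in positive semidefinite order by $\sum_jE_j$. Hence $\norm{P_Vb^{\otimes k}}_F^2\le\sum_j\inner{b^{\otimes k}}{E_jb^{\otimes k}}=k\norm{P_Wb}^{2(k-1)}$ in one line.

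That argument is basis-free and needs only the inclusion of $V$ in the sum of ranges. It uses no orthogonal basis of symmetrized monomials, no polarization, and no splitting of $b$ into $u\in W$ and $u'\in W^\perp$. The cost is that it counts the pure-$W$ component $k$ times, giving $k\alpha^{2k}+k\alpha^{2(k-1)}\beta^2$, so it yields only an upper bound.

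Your decomposition by the number of $W^\perp$ factors gives the exact value $\alpha^{2k}+k\alpha^{2(k-1)}(1-\alpha^2)$ for the full observation space. This shows the bound is sharp as $\alpha=\norm{P_Wb}\to0$, which is the regime used in the KL estimate, at the price of more coordinate bookkeeping. As you observe, the polarization half of your identification of $\mathcal V_k(W)$ is not needed for the inequality.
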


\begin{proof}
In the full tensor product space let $E_j$ be the commuting orthogonal
projector having $I$ in slot $j$ and $P_W$ in every other slot.  The space
$V$ is contained in $\sum_{j=1}^k\range(E_j)$, whose orthogonal projector
is bounded above, in positive semidefinite order, by $\sum_jE_j$.
Applying this inequality to $b^{\otimes k}$ gives
\[
 \|P_Vb^{\otimes k}\|_F^2
 \leq\sum_{j=1}^k
 \langle b^{\otimes k},E_jb^{\otimes k}\rangle
 =k\|P_Wb\|^{2(k-1)}.
\]
Restriction to the symmetric subspace does not change the conclusion.
\end{proof}

\begin{remark}[Exact projection formula]
For the full space
$\mathcal V_k(W)=\Span\{\Sym(x^{\otimes(k-1)}\otimes v):x\in W,v\in\R^d\}$,
polarization shows that it consists of symmetric tensor components with at
most one factor in $W^\perp$. Writing $a=\|P_Wb\|$ and decomposing orthogonally
by the number of $W^\perp$ factors gives
\[
 \|P_{\mathcal V_k(W)}b^{\otimes k}\|_F^2
 =a^{2k}+k a^{2(k-1)}(1-a^2)\le k a^{2(k-1)}.
\]
The same upper bound therefore holds for every smaller observation space.
\end{remark}

For a disclosure time $\tau_s$, put
$\ell_s=\min\{T,\tau_s-1\}$, with $\ell_s=T$ if $\tau_s=\infty$, and
\[
 W_s=\spanop\{z_1,\ldots,z_{\ell_s}\}.
\]

\begin{corollary}[Cumulative bound]\label{gau:cor:kl-bound}
The identity \eqref{gau:eq:exact-kl} implies
\begin{equation}\label{gau:eq:kl-moment}
 \KL(\Pref\Vert\Ptrue)
 \leq\frac{N}{2}\sum_{k=3}^4\frac{k w_k^2}{\sigma_k^2}
    \sum_{s=1}^N\E_{\Pref}\|P_{W_s}b_s\|^{2(k-1)}.
\end{equation}
If an event of reference probability at least $1-\zeta$ guarantees
$\|P_{W_s}b_s\|\leq h$ simultaneously for all $s$, then
\begin{equation}\label{gau:eq:kl-angle}
 \KL(\Pref\Vert\Ptrue)
 \leq\frac{N^2}{2}\sum_{k=3}^4
 \frac{k w_k^2}{\sigma_k^2}
       \bigl(h^{2(k-1)}+\zeta\bigr).
\end{equation}
\end{corollary}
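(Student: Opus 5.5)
Starting from the exact identity \eqref{gau:eq:exact-kl}, fix a degree $k$ and a reference history. I first expand the hidden signal as $S_{k,\hid}(t)=w_k\sum_{s:\,t<\tau_s}b_s^{\otimes k}$. Cauchy--Schwarz over the at most $N$ summands then gives
\[
 \|P_{\Delta V_{k,t}}S_{k,\hid}(t)\|_F^2\le N w_k^2\sum_{s:\,t<\tau_s}\|P_{\Delta V_{k,t}}b_s^{\otimes k}\|_F^2 .
\]
This is the only place a factor $N$ enters. I do not need orthogonality of the $b_s$, which matches the stated generality.

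Next I exchange the order of summation, putting the sum over $s$ outside and the sum over $t$ inside. For a fixed readout $s$, the inner time sum runs over $t\le\min\{T,\tau_s-1\}=\ell_s$. The increments $\Delta V_{k,1},\ldots,\Delta V_{k,\ell_s}$ are mutually orthogonal, and their direct sum is $V_{k,\ell_s}$. By Pythagoras,
\[
 \sum_{t\le\ell_s}\|P_{\Delta V_{k,t}}b_s^{\otimes k}\|_F^2=\|P_{V_{k,\ell_s}}b_s^{\otimes k}\|_F^2 .
\]
The space $V_{k,\ell_s}$ is spanned by $\Sym(z_i^{\otimes(k-1)}\otimes v)$ with $z_i\in W_s$. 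Lemma~\ref{gau:lem:rankone} therefore bounds this quantity by $k\|P_{W_s}b_s\|^{2(k-1)}$. Taking $\E_{\Pref}$ and summing over $k$ with the prefactor $1/(2\sigma_k^2)$ yields \eqref{gau:eq:kl-moment}. The step needing care is this telescoping. It must be done pathwise, before taking the expectation, because the subspaces are random and adapted to the history. It is valid since the orthogonal decomposition of $V_{k,\ell_s}$ holds on every realized history.

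For \eqref{gau:eq:kl-angle}, I split each expectation on the good event and its complement. On the event, $\|P_{W_s}b_s\|^{2(k-1)}\le h^{2(k-1)}$. Off the event, I use the trivial bound $\|P_{W_s}b_s\|\le1$, since $b_s$ is a unit vector, and the complement has probability at most $\zeta$. Each term is therefore at most $h^{2(k-1)}+\zeta$. Summing over the $N$ readouts produces the second factor of $N$, giving $N^2/2$. I expect no real obstacle beyond correctly matching the time ranges $t<\tau_s$ and $t\le T$ to $\ell_s$, including the case $\tau_s=\infty$.
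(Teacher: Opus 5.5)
Your proposal is correct and follows the paper's proof essentially step for step. Both arguments apply pathwise Cauchy--Schwarz over the at most $N$ hidden readouts, swap the $t$ and $s$ sums onto the range $t\le\ell_s$, and telescope the orthogonal increments to $\|P_{V_{k,\ell_s}}b_s^{\otimes k}\|_F^2$. Both then invoke Lemma~\ref{gau:lem:rankone} and finish by splitting on the good event, using $\|P_{W_s}b_s\|\le1$ on its complement.
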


\begin{proof}
At every realized reference path, Cauchy--Schwarz yields
\begin{align*}
 \sum_{t=1}^T\|P_{\Delta V_{k,t}}S_{k,\hid}(t)\|_F^2
 &\leq Nw_k^2\sum_{s=1}^N
       \sum_{t<\tau_s,\ t\leq T}
          \|P_{\Delta V_{k,t}}b_s^{\otimes k}\|_F^2\\
 &=Nw_k^2\sum_{s=1}^N
       \|P_{V_{k,\ell_s}}b_s^{\otimes k}\|_F^2\\
 &\leq kNw_k^2\sum_{s=1}^N\|P_{W_s}b_s\|^{2(k-1)}.
\end{align*}
The middle equality uses orthogonality of the increments of the nested
measurement spaces; it remains a pathwise identity for adaptive queries.
The last inequality is Lemma~\ref{gau:lem:rankone}.  Taking expectations proves
\eqref{gau:eq:kl-moment}.  Since a projection of a unit vector has norm at most
one, the stated event gives each moment at most
$h^{2(k-1)}+\zeta$, proving \eqref{gau:eq:kl-angle}.
\end{proof}

\subsection{Exact quartic cores, first-exit coupling, and fixed functions}
Suppose a smooth scalar primitive $\phi_r$ has the exact core
\begin{equation}\label{gau:eq:quartic-core}
 \phi_r(u)=q_0+q_3u^3+q_4u^4\quad\text{for }|u|\leq r.
\end{equation}
For the tapered primitive in the hard chain,
$(q_0,q_3,q_4)=(10,-40,30)$.
Consider the fixed base function
\begin{equation}\label{gau:eq:fixed-base}
 F_U(z)=C+c_U^Tz+\tfrac12z^TQ_Uz
              +\tau\sum_{s=1}^N\phi_r(A b_s^Tz).
\end{equation}
Here $A,\tau>0$ and $Q_U,c_U$ depend only on $U$.
Set
\[
 w_3=3\tau q_3A^3,\qquad w_4=4\tau q_4A^4,
\]
and define the correction
\[
 D_s(z)=\tau\bigl[
 \phi_r(A b_s^Tz)-q_0-q_3(A b_s^Tz)^3-q_4(A b_s^Tz)^4\bigr].
\]
Both $D_s(z)$ and its gradient vanish whenever $|A b_s^Tz|\leq r$.
The static, globally defined hard function is
\begin{equation}\label{gau:eq:fixed-hard-function}
 G_{U,Z}(x)=(F_U+P)(\rho_R(x))+\frac{\eta_0}{2}\|x\|^2,
 \qquad \eta_0\geq0.
\end{equation}

To use the channel model, at query $x_t$ set $z_t=\rho_R(x_t)$ and combine
the true tensor channels into the following first-order response:
\begin{align}
 \widetilde f_t
 &=C+N\tau q_0+c_U^Tz_t+\tfrac12z_t^TQ_Uz_t
    +\sum_{k=3}^4\frac1k z_t^TY_{k,t}
    +\sum_{s:\tau_s\leq t}D_s(z_t)
    +\tfrac{\eta_0}{2}\|x_t\|^2,\label{gau:eq:assembled-value}\\
 \widetilde g_t
 &=D\rho_R(x_t)^T
   \left(c_U+Q_Uz_t+Y_{3,t}+Y_{4,t}
        +\sum_{s:\tau_s\leq t}\nabla D_s(z_t)\right)
      +\eta_0x_t.\label{gau:eq:assembled-grad}
\end{align}
The visible corrections use only already disclosed $b_s$.  The identities
for function values follow from Euler's identity for each homogeneous
polynomial; values do not add tensor measurements beyond the two channels.

Call a query a \emph{hidden core exit} if
$|A b_s^Tz_t|>r$ for some $s$ with $t<\tau_s$.
The response \eqref{gau:eq:assembled-value}--\eqref{gau:eq:assembled-grad} agrees
exactly with $(G_{U,Z}(x_t),\nabla G_{U,Z}(x_t))$ at every query before,
and also at every query not constituting, a hidden core exit.  In particular,
using the same $(U,Z,\omega)$ to run the assembled-channel algorithm and the
actual fixed-function algorithm couples their transcripts until their first
hidden core exit.  The first exit occurs at the same query in both runs.

\begin{proposition}[Transfer to the fixed-instance distribution]
\label{gau:prop:transfer}
Assume all of the following.
\begin{enumerate}
\item On $\mathcal G_Z$, every $G_{U,Z}$ belongs to the required function
class.  There is a designated last readout $b_N$ with $\tau_N>T$ and a
threshold $h_{\mathrm{cert}}>0$ such that
\begin{equation}\label{gau:eq:certificate-hyp}
 |b_N^T\rho_R(x)|\leq h_{\mathrm{cert}}
 \quad\Longrightarrow\quad
 \|\nabla G_{U,Z}(x)\|>\varepsilon
 \qquad\text{on }\mathcal G_Z.
\end{equation}
This hypothesis may combine an inner-region hard-chain certificate with an
outer-region radial guard certificate.
\item Under the innovation reference, an event $\mathcal G_{\mathrm{geo}}$
of probability at least $1-\zeta$ guarantees both
$\|P_{W_s}b_s\|\leq h$ for every $s$ and
$|b_N^Tz_{\mathrm{out}}|\leq Rh$, where
$z_{\mathrm{out}}=\rho_R(x_{\mathrm{out}})$ and the terminal output is
allowed to be unqueried.
\item The parameters satisfy
$ARh\leq r$ and $Rh\leq h_{\mathrm{cert}}$.
\end{enumerate}
Let $\mathfrak K$ be the right-hand side of \eqref{gau:eq:kl-angle}.  For any measurable
algorithm making at most $T$ queries, its success probability averaged over
the static distribution $(U,Z)\mid\mathcal G_Z$ and its independent seed
satisfies
\begin{equation}\label{gau:eq:success-transfer}
 \boxed{\quad
 \Pr\{\|\nabla G_{U,Z}(x_{\mathrm{out}})\|\leq\varepsilon
          \mid\mathcal G_Z\}
 \leq\frac{\zeta+\sqrt{\mathfrak K/2}}{1-4e^{-4d}}.
 \quad}
\end{equation}
In particular, if the displayed upper bound is below $2/3$, there is one
fixed admissible function on which the algorithm's success probability over
its private seed is below $2/3$.
\end{proposition}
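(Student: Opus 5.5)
The plan is to bound the success probability first under the unconditioned prior $(U,Z)$, and only at the end divide by $\Pr(\mathcal G_Z)\ge1-4e^{-4d}$ from Lemma~\ref{gau:lem:noise-norm}. Fix the algorithm and its seed law. We run two experiments on the same $(U,Z,\omega)$: the fixed-function run against $G_{U,Z}$, and the assembled true-channel run that answers with \eqref{gau:eq:assembled-value}--\eqref{gau:eq:assembled-grad} built from the static tensor channels $Y_{k,t}=L_{k,t}\Theta_k$. Let $\mathcal B$ be the event, measurable from $(U,\omega,\text{transcript})$ of the channel run, that some query $z_t$ is a hidden core exit or that $|b_N^\top z_{\mathrm{out}}|>h_{\mathrm{cert}}$.

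\emph{Step 1 (reduction to $\mathcal B$).} Suppose that in the fixed-function run we are on $\mathcal G_Z$ and the output satisfies $\|\nabla G_{U,Z}(x_{\mathrm{out}})\|\le\varepsilon$. Consider first the case where the channel run has no hidden core exit. By the coupling remark, the two transcripts then coincide entirely, so the two outputs coincide as well. Hypothesis~1, in its contrapositive form \eqref{gau:eq:certificate-hyp}, then forces $|b_N^\top z_{\mathrm{out}}|>h_{\mathrm{cert}}$, so $\mathcal B$ holds. In the other case, the channel run has a hidden core exit, which also lies in $\mathcal B$. Hence $\{\text{success}\}\cap\mathcal G_Z\subseteq\mathcal B$ (viewed in the coupled channel run), and
\[
\Pr\{\text{success}\mid\mathcal G_Z\}\le \Ptrue(\mathcal B)/\Pr(\mathcal G_Z).
\]

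\emph{Step 2 (reference bound).} We now show $\Pref(\mathcal B)\le\zeta$. On $\mathcal G_{\mathrm{geo}}$, for each undisclosed readout ($t<\tau_s$) we have $z_t\in W_s$, since $W_s$ spans the queries made before disclosure and $t\le\ell_s$. Using $\|z_t\|\le R$, this gives $|Ab_s^\top z_t|\le AR\|P_{W_s}b_s\|\le ARh\le r$, so no hidden core exit occurs. Likewise $|b_N^\top z_{\mathrm{out}}|\le Rh\le h_{\mathrm{cert}}$. Therefore $\mathcal B\subseteq\mathcal G_{\mathrm{geo}}^c$ under $\Pref$.

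\emph{Step 3 (transfer).} Since $\mathcal B$ is an event on a space where both laws are defined, Pinsker's inequality gives $|\Ptrue(\mathcal B)-\Pref(\mathcal B)|\le\sqrt{\KL(\Pref\Vert\Ptrue)/2}$. Theorem~\ref{gau:thm:innovation} and Corollary~\ref{gau:cor:kl-bound} (using hypothesis~2 for the angle event) bound this KL by $\mathfrak K$. This requires the assembled channel responses to be deterministic functions of the tensor channels, $U$, and the history. We use the side-answer allowance here: $Q_Uz_t$, $c_U$, and the visible corrections $D_s$ for $\tau_s\le t$ depend only on $U$ and disclosed readouts. Combining Steps 1--3 yields \eqref{gau:eq:success-transfer}. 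If that bound is below $2/3$, the averaged success probability is below $2/3$, so some fixed $(U,Z)\in\mathcal G_Z$ gives an admissible function with success probability below $2/3$.

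The main obstacle is Step 1's coupling. We must check that before the first hidden exit the assembled answer equals the exact oracle, so that the queries, and hence the outputs, coincide. This follows from the polynomial-plus-correction identity \eqref{ana:eq:polynomial-plus-correction} and Euler's identity. We also need the first exit to occur at the same index in both runs, so that $\mathcal B$ can be evaluated in the channel run.
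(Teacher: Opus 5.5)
Your proposal is correct and matches the paper's proof step for step. It uses the same bad event $\mathcal B$ on $(U,\omega,\text{transcript})$ and excludes $\mathcal B$ under the reference via $\mathcal G_{\mathrm{geo}}$, $ARh\le r$ and $Rh\le h_{\mathrm{cert}}$. It then applies Pinsker's inequality with Theorem~\ref{gau:thm:innovation} and Corollary~\ref{gau:cor:kl-bound}, and uses the first-exit coupling to get $\{\text{success}\}\cap\mathcal G_Z\subseteq\mathcal B$ before dividing by $\Pr(\mathcal G_Z)\ge1-4e^{-4d}$; the only difference is that you present the coupling first rather than last.
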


\begin{proof}
Under the assembled true-channel law $\Ptrue$, let $\mathcal B$ be the event
of at least one hidden core exit or
$|b_N^Tz_{\mathrm{out}}|>h_{\mathrm{cert}}$.
It is an event of $(U,\omega,\text{transcript})$; the full Gaussian tensors
need not be included in that joint law.
The geometric event excludes $\mathcal B$ under the reference, since
$\|z_t\|\leq R$ and
$|b_s^Tz_t|\leq R\|P_{W_s}b_s\|$ before disclosure.
Pinsker's inequality and Theorem~\ref{gau:thm:innovation} give
\[
 \Ptrue(\mathcal B)
 \leq\Pref(\mathcal B)+\TV(\Pref,\Ptrue)
 \leq\zeta+\sqrt{\mathfrak K/2}.
\]
Couple the actual fixed-function run with the assembled true-channel run
using their same $(U,Z,\omega)$.  If $\mathcal B$ does not occur, there is no
hidden core exit, so their full transcripts and terminal outputs agree.
On $\mathcal G_Z$, hypothesis \eqref{gau:eq:certificate-hyp} then rules out
success.  Pointwise, actual success together with $\mathcal G_Z$ is thus
contained in the event $\mathcal B$ of the coupled assembled run.  Dividing
the resulting probability bound by
$\Pr(\mathcal G_Z)\geq1-4e^{-4d}$ proves
\eqref{gau:eq:success-transfer}.

The conditioning event depends only on the static instance tensors and is
independent of the algorithm's private seed.  The hard function is sampled
once, before that seed is used.  Averaging the conditional success
probabilities over these fixed functions proves the last assertion.
\end{proof}

\begin{remark}[Conditioning and query conventions]
The Gaussian posterior and KL calculation are performed before conditioning
on $\mathcal G_Z$.  Conditioning the actual distribution is the last step
of Proposition~\ref{gau:prop:transfer}; one never treats a norm-truncated tensor
as Gaussian.  Nor does one condition the KL comparison on the full static
tensors $Z_3,Z_4$.

An algorithm that stops early can be padded with zero queries, keeping its
output frozen.  The geometric lemma includes the final output as one extra
response-free candidate by taking $L_{\rm q}=L-1$. The candidate is frozen
before the analyst-only disclosure at its geometric step and receives no
matrix, latent-coordinate, tensor, value, or gradient response. The KL sum
therefore includes only actual oracle observations. If the model imposes a
per-instance almost-sure query cap,
first truncate the algorithm after $T$ queries on all answer histories.
On each admissible fixed function this changes behavior on a null event
only; integration over the admissible instance distribution preserves that
fact.  This avoids imposing a uniform exceptional set on all private seeds.
\end{remark}

\section{Explicit parameters and the full fixed-distribution proof}\label{app:assembly}
We record the exponent arithmetic and then specify the complete strengthened
experiment, so that the reduction to the original oracle can be checked
without an implicit change of quantifiers.

\subsection{Repeated quadratic copies and a polynomial dimension choice}
We apply the geometric theorem to the Gaussian comparison and record an explicit polynomial dimension.
Let $Q_0$ act on a canonical active space $\mathcal A_0$ of dimension $D_0$,
and put
\[
 \overline Q=I_{K+1}\otimes Q_0,\qquad
 d=(K+1)D_0,\qquad Q_U=U\overline Q U^T.
\]
All linear and nonlinear planted terms lie in the first copy.  The other
copies contain only the positive semidefinite quadratic $Q_0$.
Let $I_t\subseteq\mathcal A_0$ be scheduled canonical disclosure spaces
with $\overline Q I_t\subseteq I_{t+1}$, and disclose their images under
$U$.  The disclosure schedule for $b_s$ must ensure that its canonical
vector is orthogonal to the relevant earlier $I_t$.

The required geometric lemma is the following quantitative implication:
after at most $L$ adaptive query/candidate vectors, with $L_{\rm q}=L-1$
when the last vector is a terminal response-free output, and with
$J\leq D_0$ distinct eigenvalues,
one may take
\begin{equation}\label{gau:eq:geometry-input}
 \eta^2=
 \frac{32\bigl[(L+1)\log17+\log(4J/\zeta)\bigr]}{K+1},
 \qquad h=2\eta,
\end{equation}
provided $\eta\leq1/2$, with failure probability at most $\zeta$.
The geometric statement is applied after fixing the independent physical
reference innovation seeds and the algorithm seed.  Then the reference
is a deterministic adaptive matrix-vector/disclosure policy.  The deferred
canonical Gaussian starts need only have the asserted iid law conditional
on these fixed seeds; they are not asserted to be independent of $U$.

For clarity, if the canonical random cyclic space $\mathcal R$ satisfies
$\|P_{\mathcal R}|_{\mathcal A_0}\|\leq\eta$, then a unit
$b\in\mathcal A_0\cap I_t^\perp$ obeys
\[
 \|P_{I_t+\mathcal R}b\|
 \leq\frac{\eta}{\sqrt{1-\eta^2}}\leq2\eta.
\]
Indeed $I_t+\mathcal R=I_t\oplus(I-P_{I_t})\mathcal R$; for
$r\in\mathcal R$, the numerator $|\langle b,r\rangle|$ is at most
$\eta\|r\|$, and
$\|(I-P_{I_t})r\|\geq\sqrt{1-\eta^2}\|r\|$.

Combining \eqref{gau:eq:gaussian-constant}, \eqref{gau:eq:kl-angle}, and
\eqref{gau:eq:geometry-input} gives, for an absolute constant $C$,
\begin{equation}\label{gau:eq:general-calibration}
 \mathfrak K\leq C dN^2
 \left[
 \frac{w_3^2}{\delta_3^2}(h^4+\zeta)
 +\frac{w_4^2}{\delta_4^2}(h^6+\zeta)
 \right].
\end{equation}
For any fixed base instance parameters, taking
$\zeta=K^{-4}$ and $K\to\infty$ makes the right-hand side tend to zero:
the cubic term is $O(\log^2K/K)$, the quartic term is
$O(\log^3K/K^2)$, and the exceptional-event contribution is $O(K^{-3})$.
Also $h\to0$, so both core and terminal-certificate thresholds can be met
by a finite dimension.

For the normalized chain regime of Proposition~\ref{ana:prop:calibration}, the following more explicit
choice is sufficient.  Suppose the constants below may depend on the fixed
finite smoothness order $p$, but not on $\varepsilon$:
\begin{align*}
 R&\asymp_p\varepsilon^{-1},&
 A&\asymp_p\varepsilon^{-1/3},&
 r&\asymp_p\varepsilon^{-1/3},\\
 N&\asymp_p\varepsilon^{-4/3},&
 D_0&\asymp_p\varepsilon^{-5/3},&
 T,L&=O_p(\varepsilon^{-5/3}),\\
 |w_3|&=O_p(\varepsilon^{1/3}),&
 |w_4|&=O_p(1),&
 h_{\mathrm{cert}}&\asymp_p\varepsilon.
\end{align*}
Use \eqref{gau:eq:delta-choice} with a fixed positive $\mu$, so
$\delta_3\asymp_p\varepsilon^3$ and
$\delta_4\asymp_p\varepsilon^4$.  Choose
\begin{equation}\label{gau:eq:K-explicit}
 K=\left\lceil C_p\varepsilon^{-14}\right\rceil,
 \qquad \zeta=K^{-4}.
\end{equation}
For sufficiently small $\varepsilon$ the logarithms in
\eqref{gau:eq:geometry-input} are absorbed by the bound
$L+1=O_p(\varepsilon^{-5/3})$, giving
\[
 h^2=O_p(\varepsilon^{-5/3}/K).
\]
Substitution into \eqref{gau:eq:general-calibration} yields
\begin{equation}\label{gau:eq:explicit-kl}
 \mathfrak K\leq C'_p\left[
       \frac{\varepsilon^{-13}}K
       +\frac{\varepsilon^{-52/3}}{K^2}
       +\frac{\varepsilon^{-37/3}}{K^3}
       \right]=O_p(\varepsilon).
\end{equation}
The three exponents follow respectively from
\begin{align*}
 dN^2\frac{w_3^2}{\delta_3^2}h^4
   &=O_p(K\varepsilon^{-29/3}
               \varepsilon^{-10/3}/K^2)
     =O_p(\varepsilon^{-13}/K),\\
 dN^2\frac{w_4^2}{\delta_4^2}h^6
   &=O_p(K\varepsilon^{-37/3}
               \varepsilon^{-5}/K^3)
     =O_p(\varepsilon^{-52/3}/K^2),\\
 dN^2\left(\frac{w_3^2}{\delta_3^2}
          +\frac{w_4^2}{\delta_4^2}\right)\zeta
   &=O_p(\varepsilon^{-37/3}/K^3).
\end{align*}
In particular,
\[
 h=O_p(\varepsilon^{37/6}),\qquad
 Rh=O_p(\varepsilon^{31/6}),\qquad
 d=O_p(\varepsilon^{-47/3}).
\]
Thus $Rh\leq h_{\mathrm{cert}}$ and $ARh\leq r$ hold for all sufficiently
small $\varepsilon$.  Proposition~\ref{gau:prop:transfer} then bounds average
success by $O_p(\sqrt\varepsilon)$, once the deterministic hypotheses are identified with Theorem~\ref{ana:thm:confinement}. The next subsection makes this identification and completes the quantifiers.

\subsection{Assembly of the fixed hard distribution and all quantifiers}
\label{sec:assembly}

We now specify one distribution, chosen independently of any algorithm.
\(Q_0\) is the \(NM\times NM\) pure quadratic matrix of the active
block chain; its linear term is \(c_0=(-\lambda e,0,\ldots,0)\).
There are \(K+1\) copies of \(Q_0\), only the first of which has a
linear term or scalar potentials. The canonical active subspace is
\(\mathcal A_0=\R^{D_0}\oplus0\), where \(D_0=NM\).
For each phase \(s\), let \(\bar b_s\) be its canonical unit readout;
after the Haar rotation it is \(b_s=U\bar b_s\), and \(a_s=A b_s\).
The scalar \(b\) without a subscript denotes the small analytic
derivative budget of Theorem~\ref{ana:thm:confinement}.

\subsubsection{Public parameters and deterministic analytic data}

Fix the constants required by Theorem~\ref{ana:thm:confinement} and
use Proposition~\ref{ana:prop:calibration}. Take \(C_R=1\), so
\[
 R=\varepsilon^{-1},\qquad
 \eta_0=4\varepsilon^2,\qquad
 \delta_3=\varepsilon^3/8,\qquad
 \delta_4=\varepsilon^4/8.
\]
Then
\(\delta_3R^2+\delta_4R^3=\varepsilon/4\), exactly the permitted
mask-gradient budget. Choose
\[
 j_\star=\lceil M/2\rceil,\qquad
 T=\left\lfloor \frac{N j_\star}{2}\right\rfloor,\qquad
 L_{\rm cand}=T+1.
\]
For sufficiently small \(\varepsilon\), \(Nj_\star\ge4\), and therefore
\[
 T+1<Nj_\star,\qquad
 T=\Theta_p(\varepsilon^{-5/3}).
\]
The symbol \(L_{\rm cand}\) counts the final unqueried candidate in the
geometry theorem; the path stiffness \(L=b/16\) in the analytic
calibration is a different, fixed constant.

Set
\[
 K=\lceil \varepsilon^{-14}\rceil,\qquad
 d=(K+1)D_0,\qquad \zeta=K^{-4},\qquad
 \overline Q=I_{K+1}\otimes Q_0 .
\]
Multiplying \(K\) by a fixed \(p\)-dependent constant is also permissible.
The choice above suffices after reducing the positive threshold
\(\varepsilon_p\), and all constants below may depend only on \(p\).
Let \(J\le D_0\) be the number of distinct eigenvalues of \(Q_0\), and put
\[
 \eta^2=
 \frac{32[(L_{\rm cand}+1)\log17+\log(4J/\zeta)]}{K+1},
 \qquad \theta=2\eta .
\]
For sufficiently small \(\varepsilon\), the dimension and multiplicity
conditions of Theorem~\ref{geo:thm:hidden} hold.

\subsubsection{One static distribution}

Draw \(U\) Haar on \(O(d)\). Independently draw standard isonormal
Gaussian tensors \(Z_3\in\Sym^3(\R^d)\) and
\(Z_4\in\Sym^4(\R^d)\), and set
\[
 \sigma_k=\frac{\delta_k}{a_k\sqrt d},\qquad
 a_k=4\sqrt{k\log(1+4k)+2}\quad (k=3,4).
\]
Write \(y=U^\top z\), partitioned into \(K+1\) copies of \(\R^{D_0}\),
and define the rotated deterministic base
\[
 F_U(z)=F(y^{(1)})
       +\frac12\sum_{j=2}^{K+1}
                       (y^{(j)})^\top Q_0 y^{(j)}.
\]
Define the static perturbation and final global function
\begin{align}
 P_Z(z)&=\frac{\sigma_3}{3}Z_3[z,z,z]
          +\frac{\sigma_4}{4}Z_4[z,z,z,z],\\
 f_{U,Z}(x)&=(F_U+P_Z)(\rho_R(x))
                +\frac{\eta_0}{2}\|x\|^2.
 \label{eq:final-instance}
\end{align}
Finally condition the distribution of \((U,Z_3,Z_4)\) on
\[
 \mathcal G_Z=\{\sigma_3\|Z_3\|_{\op}\le\delta_3,\
                 \sigma_4\|Z_4\|_{\op}\le\delta_4\}.
\]
This is \(\mathcal D_{p,\varepsilon}\).
Lemma~\ref{gau:lem:noise-norm} gives
\(\Pr(\mathcal G_Z)\ge1-4e^{-4d}\).
Theorem~\ref{ana:thm:confinement} applies pointwise on this event:
\[
 f_{U,Z}\in C^\infty,\qquad
 f_{U,Z}(0)-\inf f_{U,Z}\le1/3,\qquad
 \sup_x\|D^{q+1}f_{U,Z}(x)\|_{\op}\le1
 \quad(1\le q\le p).
\]
Thus the distribution is supported entirely on
\(\mathcal H_{p,d}\). Sampling and conditioning occur before the
algorithm's private seed is used.

The same theorem supplies a single terminal certificate, valid on
\(\mathcal G_Z\) for every \(x\in\R^d\):
\begin{equation}\label{eq:terminal-certificate}
 |b_N^\top\rho_R(x)|\le h_{\rm cert}:=16\varepsilon/b
 \quad\Longrightarrow\quad
 \|\nabla f_{U,Z}(x)\|>\varepsilon .
\end{equation}
For \(\|x\|\le R\) this is the approximate-tail conclusion; for
\(\|x\|\ge R\) the radial certificate is even stronger and does not
require the readout condition.

\subsubsection{The exact strengthened experiment}

The two comparison laws \(\Ptrue\) and \(\Pref\) in this subsection use
the unconditioned independent Gaussian prior. The conditioned admissible
distribution \(\mathcal D_{p,\varepsilon}\) enters only in the final
transfer through \(\mathcal G_Z\).

Fix any algorithm with at most \(T\) calls on all answer histories.
At time \(t\), it chooses \(x_t\), and set \(z_t=\rho_R(x_t)\).
The quadratic channel supplies \(Q_Uz_t\), where
\(Q_U=U\overline Q U^\top\). Give the known linear vector \(Uc_0\)
for free, with \(c_0\) embedded in the first copy.
The tensor channels of Theorem~\ref{gau:thm:innovation} have
\[
 S_3=-120\tau A^3\sum_{s=1}^N b_s^{\otimes3},\qquad
 S_4=120\tau A^4\sum_{s=1}^N b_s^{\otimes4},
 \qquad \Theta_k=S_k+\sigma_kZ_k .
\]
The exact degree-\(k\) channel is
\(\Theta_k[z_t^{\otimes(k-1)},\cdot]\).
Their scalar homogeneous values are recovered by contraction with \(z_t/k\);
there is no extra value information to account for.

For the reference's side information, assign active coordinate
\((s,j)\) layer
\[
 \ell(s,j)=(s-1)j_\star+\min\{j,j_\star\},
\]
and let \(I_t\) be the canonical span of all active coordinates
with layer at most \(t\). At the start of time \(t\), disclose \(U|_{I_t}\).
The initial linear vector lies in \(I_1\), and direct inspection gives
\(\overline Q I_t\subseteq I_{t+1}\).
The full suffix readout \(\bar b_s\) is orthogonal to \(I_t\) for
\(t<sj_\star\) and lies in \(I_t\) for \(t\ge sj_\star\).
Use \(\tau_s=sj_\star\) if \(sj_\star\le T+1\), and otherwise use
\(\tau_s=\infty\). This agrees with the finite-horizon convention of
Theorem~\ref{gau:thm:innovation}.

As in \eqref{gau:eq:assembled-value}--\eqref{gau:eq:assembled-grad},
assemble a value-gradient reply from the quadratic channel, the two
tensor channels, and only the already disclosed taper corrections.
This defines the true polynomial-channel comparison law \(\Ptrue\).
The tested policy is adapted only to these assembled value-gradient pairs.
The separated tensor channels, scheduled disclosures, latent-coordinate
responses, and quadratic matrix-vector products are analyst-side variables
in an enlarged transcript; formally, extend the policy to ignore all of
them. Thus the strengthened experiment does not give the tested algorithm a
stronger oracle. Before any hidden-core exit the assembled reply agrees
exactly with the oracle of \eqref{eq:final-instance}.

Under the innovation-censored law \(\Pref\), fix all independent physical
reference Gaussian seeds \(\zeta_{k,t}\) and the algorithm seed.
The fitter, observation projectors, covariance, and visible tensor means
are functions of the physical history and disclosed \(U|_{I_t}\) only.
The reference therefore satisfies the exact policy model of
Theorem~\ref{geo:thm:hidden}. No full static mask tensor is conditioned
on in this step.
The radial map changes a query to a known scalar multiple, so
one quadratic matrix-vector product per original query suffices.

Apply the geometric model with
\[
 L=L_{\rm cand}=T+1,\qquad L_{\rm q}=T.
\]
Include \(z_{\rm out}=\rho_R(x_{\rm out})\) as the response-free
candidate at time \(T+1\). It is frozen from the first \(T\) assembled
oracle replies and receives no subsequent response. Only after it is frozen
does the analyst invoke the scheduled disclosure at \(I_{T+1}\), which the
terminated policy ignores. The response-free branch of the geometric theorem
therefore gives, with reference
probability at least \(1-\zeta\), simultaneously
\[
 \|P_{W_s}b_s\|\le\theta\quad\text{for all }s,\qquad
 |b_N^\top z_{\rm out}|\le R\theta .
\]
There is no union bound over adaptively selected query points.
The whole realized query span is controlled, including arbitrary
near-dependencies and exact rescalings.

\subsubsection{Information budget, core agreement, and failure probability}

The calibration gives
\[
 |w_3|=O_p(\varepsilon^{1/3}),\qquad
 |w_4|=O_p(1),\qquad
 N=\Theta_p(\varepsilon^{-4/3}),\quad
 D_0,L_{\rm cand}=\Theta_p(\varepsilon^{-5/3}).
\]
Because \(K=\Theta(\varepsilon^{-14})\), the geometric threshold satisfies
\[
 \theta^2=O_p(\varepsilon^{-5/3}/K),\qquad
 R\theta=O_p(\varepsilon^{31/6}).
\]
In particular, for sufficiently small \(\varepsilon\),
\[
 AR\theta\le r=A,\qquad
 R\theta\le16\varepsilon/b=h_{\rm cert}.
\]
Thus all undisclosed readouts are inside their exact polynomial cores,
and the final hidden readout stays inside the stationarity obstruction,
on the reference geometric event.

Equations~\eqref{gau:eq:kl-angle} and \eqref{gau:eq:explicit-kl} give
\begin{equation}\label{eq:assembled-KL}
 \KL(\Pref\Vert\Ptrue)
 \le C_p\left[
       \frac{\varepsilon^{-13}}{K}
       +\frac{\varepsilon^{-52/3}}{K^2}
       +\frac{\varepsilon^{-37/3}}{K^3}\right]
 =O_p(\varepsilon).
\end{equation}
This is a bound for exact static tensor observations. It remains valid
if queries are repeated, nearly dependent, or chosen at arbitrarily
small finite-difference scales.

The first-exit coupling and Proposition~\ref{gau:prop:transfer} imply
\[
 \Pr_{\mathcal D_{p,\varepsilon},\,\omega}
 \{\|\nabla f_{U,Z}(x_{\rm out})\|\le\varepsilon\}
 \le
 \frac{\zeta+
       \sqrt{\KL(\Pref\Vert\Ptrue)/2}}
      {1-4e^{-4d}}
 =O_p(\sqrt\varepsilon).
\]
Reduce \(\varepsilon_p>0\) so this is at most \(1/3\), in addition to
the finitely many earlier small-accuracy requirements.
All such thresholds and constants depend only on \(p\).
The dimension is
\[
 d=(K+1)NM=O_p(\varepsilon^{-47/3}).
\]
This proves the fixed-distribution part of Theorem~\ref{thm:matching}.

\subsubsection{Almost-sure caps, private randomness, and a fixed bad instance}

The reference argument is uniform over all measurable policies with a
pathwise query cap \(T\). Suppose instead that the original algorithm is
specified with an almost-sure cap on each admissible fixed function,
as in the question. Truncate it after \(T\) calls on every answer
history, supplying a fixed dummy output if it would attempt another call.
On every admissible fixed function this changes its law on a
probability-zero set of seeds only. Integrating over
\(\mathcal D_{p,\varepsilon}\) preserves that fact. Thus the lower
bound also applies to the original almost-sure convention.

The distribution itself depends on \(p,\varepsilon\), the explicit
template, and its own sampled \(U,Z_3,Z_4\); it does not depend on the
algorithm's seed. We may include that independent seed in the physical
advice of the geometric proof and average afterwards. Equivalently,
the same average failure bound holds for every fixed-seed policy and
then for their mixture.
If an algorithm succeeded with probability at least \(2/3\) on every
fixed \(f\in\mathcal H_{p,d}\), its average success under
\(\mathcal D_{p,\varepsilon}\) would be at least \(2/3\), contradicting
the preceding bound of \(1/3\). Hence one fixed admissible function
has success below \(2/3\).

Finally \(T=\Theta_p(\varepsilon^{-5/3})\); choosing \(c_p>0\)
smaller than its asymptotic lower coefficient gives the claimed
query lower bound. No expected-cost assumption, adaptive hard instance,
oracle noise resampling, or queried-output restriction has entered
the argument.

\section{Compatibility with the matching upper bound}\label{app:upper}
\label{sec:upper}

Section~1.4 of Carmon et al.~\cite{carmon} defines \(L_q\)-Lipschitz
\(q\)th derivative by exactly the directional line condition used here.
Thus \(f\in\mathcal H_{p,d}\) has their smoothness bounds
\(L_1=L_3=1\), with no tensor-norm conversion.
Their Theorem~2 gives a deterministic exact first-order method with
\[
 O\!\left(
 \Delta_f L_1^{1/2}L_3^{1/6}\varepsilon^{-5/3}
 \log\!\frac{500L_1\Delta_f}{\varepsilon^2}
 \right)
\]
gradient evaluations, where \(\Delta_f\) is the actual initial gap. Their
evaluation accounting gives the same asymptotic order of function
evaluations. Hence, under the joint exact value-gradient oracle used here,
the number of oracle calls has the displayed order, provided
\[
 0<\varepsilon^{2/3}
 \le \min\{\Delta_f^{1/2}L_3^{1/6},
                  L_1/(8L_3^{1/3})\}.
\]
Its algorithmic parameters do not require knowledge of \(\inf f\).

To make this condition uniform over functions whose actual gap may be
arbitrarily small, simulate the algorithm on the explicit virtual lift
\[
 \widetilde f(x,t)=f(x)+\frac12(t-1)^2,
 \qquad (x_0,t_0)=(0,0).
\]
The actual lifted gap belongs to \([1/2,3/2]\).
Its gradient is \(1\)-Lipschitz, and its directional third derivative
is \(1\)-Lipschitz: for higher derivatives the additional quadratic
vanishes, and the original-direction component has norm at most one.
The theorem's accuracy condition now holds uniformly when
\(\varepsilon^{2/3}\le1/8\).
One lifted value-gradient query requires exactly one original oracle
query, since the extra quadratic is known explicitly.
Moreover,
\[
 \|\nabla\widetilde f(x,t)\|^2
   =\|\nabla f(x)\|^2+(t-1)^2,
\]
so an \(\varepsilon\)-stationary lifted point gives an
\(\varepsilon\)-stationary original point by dropping its last coordinate.
The actual gap bound \(3/2\) therefore gives
\[
 \Qdet(\varepsilon)
 \le C\varepsilon^{-5/3}\log(2/\varepsilon).
\]
Its deterministic success also proves the randomized upper bound.
Together with Section~\ref{sec:assembly}, this completes the proof
of Theorem~\ref{thm:matching}.

\end{document}